\newtheorem{definition}{Definition}[section]
\newtheorem{theorem}[definition]{Theorem}
\newtheorem{lemma}[definition]{Lemma}
\newtheorem{sub-claim}[definition]{Sub-Claim}
\newtheorem{corollary}[definition]{Corollary}
\newtheorem*{claim*}{Claim}
\newcommand{\T}{\mathbb{T}}
\newcommand{\la}{\langle}
\newcommand{\ra}{\rangle}
\newcommand{\Q}{\mathsf{Q}}
\newcommand{\Term}{\mathsf{Term}}
\newcommand{\Rack}{\mathsf{Rack}}
\newcommand{\Quandle}{\mathsf{Quandle}}
\newcommand{\xbold}{\mathsf{x}}
\newcommand{\Conj}{\mathsf{Conj}}
\newcommand{\E}{\mathsf{E}}
\newcommand{\y}{\mathsf{y}}
\newcommand{\Grp}{\mathsf{Grp}}
\newcommand{\HalfConj}{\mathsf{HalfConj}}
\newcommand{\Left}{\mathsf{Left}}
\newcommand{\z}{\mathsf{z}}
\newcommand{\W}{\mathsf{W}}
\newcommand{\R}{\mathsf{R}}
\newcommand{\F}{\mathsf{F}}
\newcommand{\Z}{\mathbb{Z}}
\newcommand{\Tmod}{\T\mathsf{mod}}
\newcommand{\cod}{\mathsf{cod}}
\newcommand{\dom}{\mathsf{dom}}
\newcommand{\id}{\mathsf{id}}
\newcommand{\Inn}{\mathsf{Inn}}
\newcommand{\Aut}{\mathsf{Aut}}
\newcommand{\Id}{\mathsf{Id}}
\newcommand{\End}{\mathsf{End}}
\newcommand{\Group}{\mathsf{Group}}
\begin{document}

\title{Isotropy Groups of Free Racks and Quandles}
\author{Jason Parker}

\maketitle

\begin{abstract}
In this article, we apply the methods developed in \cite{MFPSpaper}, \cite{thesis} to characterize the (covariant) \emph{isotropy groups} of free, finitely generated racks and quandles. As a consequence, we show that the usual inner automorphisms of such racks and quandles are precisely those automorphisms that are `coherently extendible'. We then use this result to compute the \emph{global isotropy groups} of the categories of racks and quandles, i.e. the automorphism groups of the identity functors of these categories.   
\end{abstract}

\section{Introduction}

In \cite[Theorem 1]{Bergman}, George Bergman proved that the usual inner automorphisms of a group $G$ (defined in terms of conjugation) are \emph{exactly} those automorphisms of $G$ that can be \emph{coherently extended} along morphisms out of $G$. More precisely, he showed that an automorphism $\alpha : G \xrightarrow{\sim} G$ is inner \emph{if and only if} for any group homomorphism $f : G \to H$ with domain $G$, one can define a group automorphism $\pi_f : H \xrightarrow{\sim} H$ of the codomain in such a way that the resulting family of automorphisms $(\pi_f)_f$ is \emph{natural}, meaning that if $f : G \to G'$ and $f' : G' \to G''$ are group homomorphisms, then the following square commutes: 
\begin{center}
\begin{tikzcd}[ampersand replacement=\&, row sep = huge, column sep = huge]
G' \arrow{r}{\pi_{f}}\arrow{d}[swap]{f'} \& G'\arrow{d}{f'} \\
G''\arrow{r}[swap]{\pi_{f' \circ f}} \& G'' \\
\end{tikzcd}
\end{center} 
\noindent Such a family of automorphisms may be equivalently described as a natural automorphism of the projection functor $G/\Group \to \Group$, from the slice category under the group $G$ to the category $\Group$. We refer to such a family of automorphisms as an \emph{extended inner automorphism} of $G$. If $\mathcal{Z}(G)$ denotes the group of all extended inner automorphisms of $G$, i.e. the group of all natural automorphisms of the projection functor $G/\Group \to \Group$, then Bergman also showed in \cite[Theorem 2]{Bergman} that $\mathcal{Z}(G)$ is isomorphic to the group $G$ itself.   

In \cite{MFPSpaper}, the author and his collaborators took inspiration from this result of Bergman to analyze the extended inner automorphisms of the models of \emph{any} (single-sorted) algebraic or equational theory whatsoever (as a special case, a group is a model of the algebraic theory of groups). In \cite{thesis}, the author then extended this study from (single-sorted) algebraic theories to (multi-sorted) \emph{essentially} algebraic theories (where operations are only required to be \emph{partially} defined). 

As for groups, there is a well-known notion of \emph{inner automorphism} for a rack or quandle (which, along with the theories of racks and quandles, is defined and discussed in the subsequent section). Using techniques developed in \cite{MFPSpaper} and \cite{thesis}, we will prove a result for \emph{free} racks and quandles that is analogous to the one that George Bergman proved for (arbitrary) groups: namely, we will show that an automorphism of a rack/quandle is inner (in the well-known sense) \emph{if and only if} it can be `coherently extended' along morphisms out of the rack/quandle, in the sense described for groups.

\section{Background}    

We now review the relevant material from \cite{MFPSpaper} and \cite[Chapter 2]{thesis} on the isotropy groups of free models of algebraic theories. For more details, see those references.

 A single-sorted \emph{algebraic theory} $\T$ is a set of equations between the terms of a single-sorted first-order signature $\Sigma$ consisting of operation symbols. For example, the theories of (commutative) monoids, (abelian) groups, (commutative) rings with unit, and the theories of racks and quandles (to be defined explicitly below). A (set-based) \emph{model} $M$ of a single-sorted algebraic theory $\T$ is a set equipped with functions on $M$ interpreting the function symbols of the signature, which satisfies the axioms of $\T$. One can then form the category $\Tmod$ of (set-based) models of $\T$ and homomorphisms between them. 
 
Given $M \in \Tmod$, the \emph{(covariant) isotropy group} of $M$ is the group $\mathcal{Z}_\T(M)$ of all natural automorphisms of the forgetful functor $M / \Tmod \to \Tmod$. More concretely, an element of $\mathcal{Z}_\T(M)$ is a family of automorphisms
\[ \pi = \left(\pi_h : \cod(h) \xrightarrow{\sim} \cod(h)\right)_{\dom(h) = M} \] in $\Tmod$ indexed by morphisms $h \in \Tmod$ with domain $M$ that has the following naturality property: if $h : M \to M'$ and $h' : M' \to M''$ are homomorphisms in $\Tmod$, then the following diagram commutes:
\begin{center}
\begin{tikzcd}[ampersand replacement=\&, row sep = huge, column sep = huge]
M' \arrow{r}{\pi_{h}}\arrow{d}[swap]{h'} \& M'\arrow{d}{h'} \\
M''\arrow{r}[swap]{\pi_{h' \circ h}} \& M'' \\
\end{tikzcd}
\end{center}     
We then say that an automorphism $h : M \xrightarrow{\sim} M$ of $M \in \Tmod$ is a \emph{categorical inner automorphism} (or is \emph{coherently extendible}) if there is some $\pi \in \mathcal{Z}_\T(M)$ with $h = \pi_{\mathsf{id}_M} : M \xrightarrow{\sim} M$; roughly, $h$ is a categorical inner automorphism if it can be coherently extended along any morphism out of $M$. 

In \cite{MFPSpaper} and \cite{thesis} the author and his collaborators gave a \emph{logical} characterization of the isotropy group of a model of an algebraic theory. We will only review the (simpler) characterization for the \emph{free, finitely generated} models, as these are the only models that will concern us in this article. If $\T$ is an algebraic theory and $n \geq 0$, then a model $M_n \in \Tmod$ is \emph{free on} $n$ \emph{generators} if it contains $n$ distinct elements (the \emph{generators}) $m_1, \ldots, m_n$ and has the following universal property: for any $N \in \Tmod$ and any elements $a_1, \ldots, a_n \in N$, there is a unique homomorphism $h_{a_1, \ldots, a_n} : M_n \to N$ with $h_{a_1, \ldots, a_n}(m_i) = a_i$ for each $1 \leq i \leq n$. For a fixed $n \geq 0$, the free $\T$-models on $n$ generators are all isomorphic, so we may speak of \emph{the} free $\T$-model on $n$ generators (unique up to isomorphism). 

The free $\T$-model on $n$ generators can be given the following explicit description: let $\{\y_1, \ldots, \y_n\}$ be a set of $n$ distinct constants (not in $\Sigma$), and let $\Sigma(\y_1, \ldots, \y_n)$ be the single-sorted signature obtained from $\Sigma$ by adding the elements $\y_1, \ldots, \y_n$ as new constant symbols. Let $\T(\y_1, \ldots, \y_n)$ be the algebraic theory with the same axioms as $\T$, but now regarded as an algebraic theory over the signature $\Sigma(\y_1, \ldots, \y_n)$. Consider the set $\Term^c(\Sigma(\y_1, \ldots, \y_n))$ of \emph{closed} terms over the signature $\Sigma(\y_1, \ldots, \y_n)$. We then define a relation $\sim_{\T, n} \ = \ \sim_\T$ on $\Term^c(\Sigma(\y_1, \ldots, \y_n))$ by setting $s \sim_{\T} t$ iff \[ \T(\y_1, \ldots, \y_n) \vdash s = t \] for any $s, t \in \Term^c(\Sigma(\y_1, \ldots, \y_n))$. Roughly, we have $s \sim_{\T} t$ iff $s$ can be proved equal to $t$ using (only) the axioms of $\T$. Then $\sim_{\T}$ is a \emph{congruence} relation on $\Term^c(\Sigma(\y_1, \ldots, \y_n))$, i.e. an equivalence relation that is compatible with the operation symbols in $\Sigma$. We can then form the \emph{quotient} $\T$-model \[ \Term^c(\Sigma(\y_1, \ldots, \y_n))/{\sim_{\T}}, \] whose objects are $\sim_{\T}$-congruence classes, which will have the desired universal property, with generators $[\y_1], \ldots, [\y_n]$. So we can take \[ M_n :=\Term^c(\Sigma(\y_1, \ldots, \y_n))/{\sim_{\T}} \] as an explicit construction of the free $\T$-model on $n$ generators.

Now let $G_\T(M_n)$ be the set of all elements
\[ [t] \in \Term^c(\Sigma(\xbold, \y_1, \ldots, \y_n))/{\sim_{\T}} \] (note the additional constant $\xbold$) with the following properties:
\begin{itemize}
\item $[t]$ is \emph{invertible}, meaning that there is some $t^{-1} \in \Term^c(\Sigma(\xbold, \y_1, \ldots, \y_n))$ such that
\[ \T(\xbold, \y_1, \ldots, \y_n) \vdash t[t^{-1}/\xbold] = \xbold = t^{-1}[t/\xbold]. \]
\item $[t]$ \emph{commutes generically with} the operation symbols of $\Sigma$, meaning that if $f$ is an $m$-ary operation symbol of $\Sigma$, then
\[ \T(\xbold_1, \ldots, \xbold_m, \y_1, \ldots, \y_n) \vdash t[f(\xbold_1, \ldots, \xbold_m)/\xbold] = f(t[\xbold_1/\xbold], \ldots, t[\xbold_m/\xbold]). \] 
\end{itemize}  
Then this set $G_\T(M_n)$ can be given the structure of a group (with unit element $[\xbold]$ and multiplication given by substitution into $\xbold$), and we then have (cf. \cite[Corollary 2.4.15]{thesis})
\[ \mathcal{Z}_\T(M_n) \cong G_\T(M_n). \] We refer to $G_\T(M_n)$ as the \emph{logical} isotropy group of $M_n$; thus, the (categorical) covariant isotropy group of $M_n$ is isomorphic to its logical isotropy group. Therefore, the extended inner automorphisms of $M$ can essentially be identified with those (congruence classes of) closed $\Sigma$-terms over the constant $\xbold$ and the generating constants $\y_1, \ldots, \y_n$ that are invertible and commute generically with the operations of $\Sigma$. 

Given any $[t] \in \Term^c(\Sigma(\xbold, \y_1, \ldots, \y_n))/{\sim_{\T}}$ and any $\T$-model $N$ with $n$ distinct elements $a_1, \ldots, a_n \in N$, the element $[t]$ induces a function
\[ [t]^{N, a_1, \ldots, a_n} : N \to N; \] roughly, given any $a \in N$, the value $[t]^{N, a_1, \ldots, a_n}(a) \in N$ is the element of $N$ obtained by substituting $a_1, \ldots, a_n$ for $\y_1, \ldots, \y_n$ and $a$ for $\xbold$ in $t$, and then interpreting/evaluating the result in $N$. The following results then follow from the definition of the isomorphism $\mathcal{Z}_\T(M_n) \cong G_\T(M_n)$, cf. \cite[Corollary 2.2.42]{thesis}. For any homomorphism $h : M_n \to N$ in $\Tmod$, let us write $h_1, \ldots, h_n \in N$ for the images $h([\y_1]), \ldots, h([\y_n]) \in N$ of the generators of $M_n$ under $h$.
\begin{itemize}
\item Given any (not necessarily \emph{natural}) family 
\[ \pi = \left(\pi_h : \cod(h) \to \cod(h)\right)_{\dom(h) = M_n} \] of \emph{endo}morphisms in $\Tmod$ indexed by morphisms with domain $M_n$, we have $\pi \in \mathcal{Z}_\T(M_n)$ iff there is some (uniquely determined) element $[t] \in G_\T(M_n)$ with the property that
\[ \pi_h = [t]^{N, h_1, \ldots, h_n} : N \to N \] for each homomorphism $h : M_n \to N$ in $\Tmod$ with domain $M_n$ (in particular, every such function $[t]^{N, h_1, \ldots, h_n}$ will be a $\T$-model automorphism).    
\item Given any endomorphism $h : M_n \to M_n$ in $\Tmod$, we have that $h$ is a \emph{categorical inner automorphism} of $M_n$ iff there is some element $[t] \in G_\T(M_n)$ with \[ h = [t]^{M, \id_1, \ldots, \id_n} : M \to M \] (where $\id : M_n \to M_n$ is the identity morphism). 
\end{itemize}            

In this article, we will be computing the (logical) isotropy groups of the free, finitely generated models of the algebraic theories of \emph{racks} and \emph{quandles}. Our computations will depend heavily on the solutions of the word problems for free racks and quandles in terms of the solution of the word problem for free groups given in \cite[Section 4.1]{SDworld}. We now review the definitions of these algebraic theories: 
    
\begin{definition} 
{\em
\
\begin{enumerate}
\item Let $\Sigma$ be the single-sorted signature containing two binary function symbols $\lhd, \lhd^{-1}$, written in infix notation.
\item Let $\T_\Rack$ be the algebraic theory over the signature $\Sigma$ with the following axioms (where $x, y, z$ are variables):
\begin{itemize}
\item $x \lhd (y \lhd z) = (x \lhd y) \lhd (x \lhd z)$.
\item $x \lhd^{-1} (y \lhd^{-1} z) = (x \lhd^{-1} y) \lhd^{-1} (x \lhd^{-1} z)$.
\item $(x \lhd y) \lhd^{-1} y = x$.
\item $(x \lhd^{-1} y) \lhd y = x$.
\end{itemize}
\item Let $\T_\Quandle$ be the algebraic theory over the signature $\Sigma$ whose axioms are those of $\T_\Rack$ together with the following two additional axioms:
\begin{itemize}
\item $x \lhd x = x$. 
\item $x \lhd^{-1} x = x$.
\end{itemize}
\end{enumerate} \qed
} 
\end{definition}

Racks and quandles are algebraic structures that originally arose in the context of knot theory, to describe so-called \emph{Reidemeister moves}.  Algebraically speaking, they axiomatize the notion of (group) \emph{conjugation} (without reference to multiplication or inverses). For example, a canonical quandle structure can be defined on (the underlying set of) any group $G$ by setting $g \lhd h := ghg^{-1}$ and $g \lhd^{-1} h := g^{-1}hg$. Examples of racks that are \emph{not} (necessarily) quandles include constant actions $x \lhd y := \sigma(x) := x \lhd^{-1} y$, where $\sigma$ is a permutation of a fixed set $X$. Any equation involving only conjugation that is provable in the theory of groups is also provable in the theory of quandles, so that quandles essentially axiomatize the concept of group-theoretic conjugation (\cite[Theorem 4.2]{Joyce}).

If $R$ is any rack with $r \in R$, then it can be shown that the function $(-) \lhd r : R \to R$ is a rack automorphism, with inverse $(-) \lhd^{-1} r : R \to R$. Let $\Aut(R)$ be the group of all rack automorphisms of $R$. Then the group $\mathsf{Inn}(R)$ of \emph{algebraic inner automorphisms} of $R$ is defined to be the subgroup of $\Aut(R)$ generated by all such rack automorphisms $(-) \lhd r$. If $Q$ is a quandle, then we define $\Inn(Q)$ analogously. Explicitly, an automorphism $f : R \xrightarrow{\sim} R$ of a rack $R$ is an \emph{algebraic inner} automorphism iff there are $p, n \geq 0$ and $r_1, \ldots, r_n \in R$ and $\delta_1, \ldots, \delta_p, \epsilon_1, \ldots, \epsilon_n = \pm 1$ such that \[ f(r) = (\ldots((((\ldots((r \lhd^{\delta_1} r) \lhd^{\delta_2} r) \ldots) \lhd^{\delta_p} r) \lhd^{\epsilon_1} r_1) \lhd^{\epsilon_2} r_2) \ldots) \lhd^{\epsilon_n} r_n \] for all $r \in R$. Similarly, an automorphism $f : Q \xrightarrow{\sim} Q$ of a quandle $Q$ is an \emph{algebraic inner} automorphism iff there are $n \geq 0$ and $q_1, \ldots, q_n \in Q$ and $\epsilon_1, \ldots, \epsilon_n = \pm 1$ such that \[ f(q) = (\ldots((q \lhd^{\epsilon_1} q_1) \lhd^{\epsilon_2} q_2) \ldots) \lhd^{\epsilon_n} q_n \] for all $q \in Q$. It is not difficult to show that any algebraic inner automorphism of an arbitrary rack or quandle is also a \emph{categorical} inner automorphism. As a consequence of our results in the next two sections, we will show that the converse is true for \emph{free, finitely generated} racks and quandles: i.e. we will show that any \emph{categorical} inner automorphism of a free, finitely generated rack or quandle must be an \emph{algebraic} inner automorphism as well.      

\section{Isotropy Groups of Free Quandles}

We will first characterize the (logical) isotropy groups of free, finitely generated quandles, because this turns out to be the simpler task. First, we give an explicit description of the free quandle on $n$ generators, as given in e.g. \cite[Proposition 4.2]{SDworld}.

Let $\Sigma_\Grp$ be the single-sorted signature of the algebraic theory $\T_\Grp$ of groups, with  three function symbols $\cdot$ (binary), $^{-1}$ (unary), and $e$ (constant), the first two written in infix notation. For any (finite) set $X$, let $\Sigma_\Grp(X)$ be the signature that extends the signature $\Sigma_\Grp$ by adding the elements of $X$ as new constants, and let $\Term^c(\Sigma_\Grp(X))$ be the set of \emph{closed} terms over the signature $\Sigma_\Grp(X)$. Also let $\T_\Grp(X)$ be the theory with the same axioms as $\T_\Grp$, but now regarded as an algebraic theory over the signature $\Sigma_\Grp(X)$. 

Given a (finite) set $X$, it has been shown (cf. \cite[Proposition 4.2]{SDworld}) that the free quandle on $X$ has the following presentation, which we denote by $\Conj(\F_X)$, where $\F_X$ is the free group on $X$ (with the presentation given in Section 1 for $\T = \T_\Grp$). The underlying set of $\Conj(\F_X)$ is just the underlying set of $\F_X$, and for any $[s], [t] \in \F_X$ (so that $s, t \in \Term^c(\Sigma_\Grp(X))$) we have \[ [s] \lhd [t] := [t^{-1} \cdot s \cdot t] \in \F_X \] and \[ [s] \lhd^{-1} [t] := \left[t \cdot s \cdot t^{-1}\right] \in \F_X. \] (When brackets are omitted when writing group multiplication, we will assume that they associate to the left).

Let $\Sigma(X)$ be the signature that extends $\Sigma$ (the signature for racks and quandles) by adding the elements of $X$ as constants, and let $\Term^c(\Sigma(X))$ be the set of \emph{closed} terms over the signature $\Sigma(X)$. Then there is a function \[ \E_X : \Term^c(\Sigma(X)) \to \Term^c(\Sigma_\Grp(X)) \] defined by induction on the structure of closed terms by
\[ \E_X(\xbold) := \xbold \tag{$\xbold \in X$} \]
\[ \E_X(s \lhd t) := \E(t)^{-1}\cdot \E(s) \cdot \E(t) \]
\[ \E_X(s \lhd^{-1} t) := \E(t) \cdot \E(s) \cdot \E(t)^{-1} \] for $s, t \in \Term^c(\Sigma(X))$. 
Technically the map $\E_X$ depends on $X$, but we will omit the subscript when confusion will not arise. We then have the following substitution lemma:

\begin{lemma}
Let $X$ be an arbitrary (finite) set with designated element $\xbold \in X$, and let $X'$ be another (finite) set with $X \subseteq X'$. Then for any $t \in \Term^c(\Sigma(X))$ and $s \in \Term^c(\Sigma(X'))$, we have \[ \E(t[s/\xbold]) \sim_{\Grp} \E(t)[\E(s)/\xbold] \] in $\F_{X'}$. 
\end{lemma}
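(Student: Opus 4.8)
The plan is to prove the identity by \emph{structural induction} on the closed term $t \in \Term^c(\Sigma(X))$, exploiting the fact that both $\E$ and the substitution operation $(-)[{-}/\xbold]$ are defined by recursion on term structure. I would rely on two elementary facts throughout. First, substitution for a constant commutes with term formation on both sides: on the group side $(u \cdot v)[w/\xbold] = u[w/\xbold] \cdot v[w/\xbold]$ and $(u^{-1})[w/\xbold] = (u[w/\xbold])^{-1}$, and on the rack/quandle side $(t_1 \lhd t_2)[s/\xbold] = (t_1[s/\xbold]) \lhd (t_2[s/\xbold])$ and similarly for $\lhd^{-1}$. Second, $\sim_\Grp$ is a \emph{congruence} on $\Term^c(\Sigma_\Grp(X'))$, so provable equalities may be freely substituted into the group operations $\cdot$ and $^{-1}$.

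For the base cases, I would check the constants. If $t = \xbold$, then $t[s/\xbold] = s$, so the left side is $\E(s)$, while the right side is $\E(\xbold)[\E(s)/\xbold] = \xbold[\E(s)/\xbold] = \E(s)$, and the two are literally equal. If $t = c$ for a constant $c \in X$ with $c \neq \xbold$, then $t[s/\xbold] = c$ and both sides reduce to $c$. For the inductive step with $t = t_1 \lhd t_2$, I would compute, using that substitution distributes over $\lhd$ and unfolding the definition of $\E$,
\[ \E(t[s/\xbold]) = \E(t_2[s/\xbold])^{-1} \cdot \E(t_1[s/\xbold]) \cdot \E(t_2[s/\xbold]). \]
Applying the induction hypothesis $\E(t_i[s/\xbold]) \sim_\Grp \E(t_i)[\E(s)/\xbold]$ for $i = 1,2$ and invoking the congruence property of $\sim_\Grp$, this is $\sim_\Grp$-equal to the group term obtained by substituting $\E(s)$ for $\xbold$ in $\E(t_2)^{-1}\cdot \E(t_1) \cdot \E(t_2) = \E(t)$, which is exactly $\E(t)[\E(s)/\xbold]$. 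The case $t = t_1 \lhd^{-1} t_2$ is entirely symmetric, using the clause $\E(s \lhd^{-1} t) = \E(t)\cdot\E(s)\cdot\E(t)^{-1}$.

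The computations are routine, so the only real point requiring care is bookkeeping about which copy of $\E$ is being used: $\E(t)$ and $\E(s)$ denote $\E_X(t)$ and $\E_{X'}(s)$ respectively, whereas $\E(t[s/\xbold])$ is $\E_{X'}$ applied to a term over $\Sigma(X')$. Since $\E_X$ and $\E_{X'}$ are defined by identical recursive clauses, $\E_{X'}$ extends $\E_X$ and the two agree on every term built only from constants in $X$; I would note this at the outset so that the induction hypothesis, stated for the subterms $t_i \in \Term^c(\Sigma(X))$, lines up with the occurrences of $\E$ appearing in the inductive computation. Beyond this notational alignment there is no genuine obstacle, and the argument closes immediately once the two congruence-and-distributivity facts are in hand.
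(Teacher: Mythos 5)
Your proposal is correct and follows essentially the same route as the paper's own proof: structural induction on $t$ with the same base cases, unfolding the definition of $\E$ on $t_1 \lhd^{\pm 1} t_2$, applying the induction hypotheses via the congruence property of $\sim_\Grp$, and using the fact that substitution for a constant commutes with the group operations (which the paper proves by a small induction on the term, while you correctly observe it is immediate from the recursive definition of substitution). Your explicit remark that $\E_{X'}$ extends $\E_X$ on terms over $\Sigma(X)$ is a point the paper leaves implicit, but it does not change the substance of the argument.
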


\begin{proof}
For a fixed $s \in \Term^c(\Sigma(X'))$, we prove the claim by induction on $t \in \Term^c(\Sigma(X))$: 
\begin{itemize}
\item If $t \equiv \xbold$, then we have
\begin{align*}
\E(t[s/\xbold]) &= \E(\xbold[s/\xbold]) \\
		 &= \E(s) \\
		 &= \xbold[\E(s)/\xbold] \\
		 &= \E(\xbold)[\E(s)/\xbold] \\
		 &= \E(t)[\E(s)/\xbold].
\end{align*}

\item If $t \equiv \y$ for some $\y \in X$ with $\xbold \neq \y$, then we have
\begin{align*}
\E(t[s/\xbold]) &= \E(\y[s/\xbold]) \\
		 &= \E(\y) \\
		 &= \y \\
		 &= \y[\E(s)/\xbold] \\
		 &= \E(\y)[\E(s)/\xbold] \\
		 &= \E(t)[\E(s)/\xbold].
\end{align*}

\item Suppose that $t \equiv t_1 \lhd t_2$ for some $t_1, t_2 \in \Term^c(\Sigma(X))$ with 
\[ \E(t_1[s/\xbold]) \sim \E(t_1)[\E(s)/\xbold] \]
and
\[ \E(t_2[s/\xbold]) \sim \E(t_2)[\E(s)/\xbold]. \]
Then we have
\begin{align*}
\E(t[s/\xbold])	 &= \E((t_1 \lhd t_2)[s/\xbold]) \\
		 &= \E(t_1[s/\xbold] \lhd t_2[s/\xbold]) \\
		 &= \E(t_2[s/\xbold])^{-1} \cdot \E(t_1[s/\xbold]) \cdot \E(t_2[s/\xbold]) \\
		 &\sim (\E(t_2)[\E(s)/\xbold])^{-1} \cdot \E(t_1)[\E(s)/\xbold] \cdot \E(t_2)[\E(s)/\xbold] \\
		 &\sim \E(t_2)^{-1}[\E(s)/\xbold] \cdot \E(t_1)[\E(s)/\xbold] \cdot \E(t_2)[\E(s)/\xbold] \\
		 &= \left(\E(t_2)^{-1} \cdot \E(t_1) \cdot \E(t_2)\right)[\E(s)/\xbold] \\
		 &= \E(t_1 \lhd t_2)[\E(s)/\xbold] \\
		 &= \E(t)[\E(s)/\xbold];
\end{align*}
note that the fifth equality holds because for any $u, v \in \Term^c(\Sigma_\Grp(X'))$ we have $u[v/\xbold]^{-1} \sim u^{-1}[v/\xbold]$, as one can easily prove by induction on $u$ for a fixed $v$.  
The case where $t \equiv t_1 \lhd^{-1} t_2$ is exactly analogous.
\end{itemize}
\end{proof}

\noindent In \cite[Section 4.1]{SDworld}, the following result was proven:

\begin{theorem}[Dehornoy \cite{SDworld}]
For any (finite) set $X$ and $s, t \in \Term^c(\Sigma(X))$, we have \[ \T_{\Quandle}(X) \vdash s = t \] iff \[ \E(s) \sim \E(t). \] \qed
\end{theorem}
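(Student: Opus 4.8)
The plan is to read $\E$ (followed by passage to $\sim_\Grp$-classes) as the \emph{evaluation map} of closed quandle terms into the quandle $\Conj(\F_X)$, and then to exploit the already-cited fact that $\Conj(\F_X)$ is the \emph{free} quandle on $X$. Concretely, a routine induction on the structure of $t \in \Term^c(\Sigma(X))$ shows that if each constant $\xbold \in X$ is interpreted as $[\xbold] \in \F_X$ and $t$ is evaluated in $\Conj(\F_X)$, then its value is exactly $[\E(t)]$: the base case is $\E(\xbold) = \xbold$, and the inductive step is forced by matching the recursive clauses $\E(s \lhd t) = \E(t)^{-1}\E(s)\E(t)$ and $\E(s \lhd^{-1} t) = \E(t)\E(s)\E(t)^{-1}$ against the defining operations $[s]\lhd[t] = [t^{-1} s t]$ and $[s]\lhd^{-1}[t] = [t s t^{-1}]$ of $\Conj(\F_X)$. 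Thus the theorem is the assertion that this evaluation map is a \emph{complete invariant} for $\T_\Quandle(X)$-provable equality.

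For the forward (soundness) direction I would argue by induction on an equational derivation of $\T_\Quandle(X) \vdash s = t$. Reflexivity, symmetry and transitivity are immediate since $\sim_\Grp$ is an equivalence relation, and closure under the operations follows from $\sim_\Grp$ being a group congruence together with the compositional clauses for $\E$: if $\E(s_i) \sim \E(t_i)$ then $\E(s_1 \lhd s_2) = \E(s_2)^{-1}\E(s_1)\E(s_2) \sim \E(t_2)^{-1}\E(t_1)\E(t_2) = \E(t_1 \lhd t_2)$, and likewise for $\lhd^{-1}$. The only remaining rule is instantiation of an axiom, and this is exactly where the preceding substitution lemma is used: it reduces each closed instance of an axiom to the ``generic'' check that the two sides have $\sim_\Grp$-equal $\E$-images once the axiom variables are themselves treated as constants. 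The invertibility and idempotency axioms then give one-line cancellations (e.g. $\E((x \lhd y)\lhd^{-1} y) = \E(y)\E(y)^{-1}\E(x)\E(y)\E(y)^{-1} = \E(x)$ and $\E(x \lhd x) = \E(x)^{-1}\E(x)\E(x) = \E(x)$), while the self-distributivity axioms amount precisely to the statement that conjugation is compatible with the quandle operations, i.e. that $\Conj(\F_X)$ really is a quandle.

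Soundness packages as a well-defined quandle homomorphism $\bar\E : Q_X \to \Conj(\F_X)$, where $Q_X := \Term^c(\Sigma(X))/{\sim_{\T_\Quandle}}$ is the free quandle on $X$ furnished by the general construction of Section 1, and $\bar\E$ fixes every generator $\xbold \in X$. For the converse (completeness) I would invoke the cited freeness of $\Conj(\F_X)$: by its universal property there is a unique quandle homomorphism $\psi : \Conj(\F_X) \to Q_X$ with $\psi(\xbold) = [\xbold]$ for each $\xbold \in X$. Both $\psi \circ \bar\E$ and $\id_{Q_X}$ are quandle endomorphisms of the free quandle $Q_X$ fixing all generators, so by the uniqueness clause of freeness they coincide; hence $\bar\E$ is (split) injective. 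Injectivity of $\bar\E$ is precisely the implication $\E(s) \sim_\Grp \E(t) \Rightarrow \T_\Quandle(X) \vdash s = t$, which completes the equivalence.

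The part I expect to carry the real weight is completeness, and in the argument above all of that weight is borne by the cited fact that $\Conj(\F_X)$ is the free quandle on $X$; granting it, injectivity is purely formal. A genuinely self-contained proof would have to replace that citation by a \emph{normal-form theorem} for closed quandle terms — presenting each $\sim_{\T_\Quandle}$-class by a canonical expression (morally, an element of $\F_X$ together with its conjugating data) and showing, via a confluent rewriting system built from the defining axioms of $\T_\Quandle$, both that every term reduces to a normal form and that $\E$ separates distinct normal forms. Establishing termination and confluence of that rewriting system, and checking injectivity of $\E$ on normal forms, is the hard technical core, and it is exactly the content supplied by Dehornoy's analysis in \cite[Section 4.1]{SDworld}, which is why we are content to cite it here.
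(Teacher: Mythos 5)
The first thing to note is that the paper does not prove this statement at all: it is imported from Dehornoy \cite{SDworld} and stated with a \qed, then used as a black box throughout Section 3. Your proposal is therefore a genuinely different route, and it is correct relative to the other fact the paper also imports from \cite{SDworld}, namely that $\Conj(\F_X)$ is the free quandle on $X$. Granting that, your reduction is valid: the structural induction identifying $[\E(t)]$ with the evaluation of $t$ in $\Conj(\F_X)$, soundness of equational logic, and the universal-property argument showing that $\psi \circ \bar\E$ is an endomorphism of the term-model free quandle fixing all generators (hence the identity, hence $\bar\E$ is injective) are all sound, and this is the standard way to extract a word-problem criterion from a known presentation of a free algebra. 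Two wrinkles, both inherited from the paper rather than introduced by you: (i) strictly, the free quandle on $X$ is not all of $\Conj(\F_X)$ but the sub-quandle of conjugates $w^{-1}\cdot\xbold\cdot w$ of generators (the identity $e$, for instance, is not quandle-generated by $X$), so your appeal to its universal property needs the easy extra remark that $\bar\E$ lands in that sub-quandle --- which your evaluation induction already shows; (ii) the paper's axioms are written in the left self-distributive form $x \lhd (y \lhd z) = (x \lhd y) \lhd (x \lhd z)$, whereas its conjugation $[s] \lhd [t] = [t^{-1}\cdot s\cdot t]$ satisfies the right self-distributive law, so your unchecked assertion that $\Conj(\F_X)$ ``really is a quandle'' for the stated axioms is exactly where a literal-minded reader hits this convention mismatch; under either consistent choice of convention the check goes through. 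What your route buys is that only one of the two Dehornoy citations is primitive: the word-problem criterion follows formally from the freeness presentation plus soundness. What it does not buy --- as you candidly say --- is independence from \cite{SDworld}: the combinatorial core (normal forms and freeness) is the content being cited either way.
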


\noindent We will also require the following lemma. Recall that a closed term $s \in \Term^c(\Sigma_\Grp(X))$ is said to be \emph{reduced} if it is either $e$ or $\xbold \in X$, or else has the form $t_1^{\epsilon_1} \cdot \ldots \cdot t_m^{\epsilon_m}$ with $m \geq 2$ and $t_1, \ldots, t_m \in X$ and $\epsilon_1, \ldots \epsilon_m \in \{1, -1\}$ and $t_i = t_{i+1}$ implies $\epsilon_i = \epsilon_{i+1}$ for each $1 \leq i < m$. We will sometimes refer to such reduced terms as \emph{reduced group words} (over $X$). It is a standard fact about free groups that if $s, t$ are reduced group words over $X$ with $s \sim_\Grp t$, then $s \equiv t$ (i.e. $s$ and $t$ must be the same word).  

\begin{lemma}
Let $X := \{\xbold_0, \xbold_1, \y_1, \ldots, \y_n\}$ for some $n \geq 0$. If $s \in \Term^c(\Sigma_\Grp(\xbold, \y_1, \ldots, \y_n))$ is reduced and \[ s[\xbold_1/\xbold] \cdot s[\xbold_1^{-1}\xbold_0\xbold_1/\xbold] \sim s[\xbold_0/\xbold] \cdot s[\xbold_1/\xbold], \] then $s$ has at most one occurrence of $\xbold$ (which must then have exponent $1$). 
\end{lemma}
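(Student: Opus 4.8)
The plan is to read the hypothesized identity as an equation between reduced words in the free group $\F_X$, after first putting $s$ into a convenient normal form. Write $s$ in block form $s \equiv w_0 \xbold^{e_1} w_1 \xbold^{e_2} \cdots \xbold^{e_k} w_k$, where $k \geq 0$ is the number of maximal $\xbold$-blocks, each $e_i \in \Z \setminus \{0\}$ is the signed exponent of the $i$-th block, and each $w_i$ is a reduced group word in $\y_1, \ldots, \y_n$ alone, with $w_1, \ldots, w_{k-1}$ necessarily nonempty (otherwise two $\xbold$-blocks would merge, contradicting reducedness of $s$). Set $E := \sum_i e_i$, $N := \sum_i |e_i|$ (the total number of occurrences of $\xbold$), and $L := \sum_i |w_i|$, and abbreviate $A := s[\xbold_1/\xbold]$, $B := s[\xbold_1^{-1}\xbold_0\xbold_1/\xbold]$, $C := s[\xbold_0/\xbold]$, so that the hypothesis reads $AB \sim CA$. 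A direct inspection shows that $A$, $B$, $C$ are already reduced, with $|A| = |C| = L + N$ and $|B| = L + N + 2k$, since each block $\xbold^{e_i}$ becomes the reduced subword $\xbold_1^{-1}\xbold_0^{e_i}\xbold_1$, contributing two extra letters. The goal is then to show $k \leq 1$ and, when $k = 1$, that $e_1 = 1$.

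I would obtain the exponent condition cheaply from the retraction $\psi : \F_X \to \F_{\{\xbold_0, \xbold_1\}}$ sending every $\y_j \mapsto e$ and fixing $\xbold_0, \xbold_1$. Applying $\psi$ to $AB \sim CA$ collapses each $w_i$ and telescopes the conjugates, yielding $\xbold_1^{E-1}\xbold_0^E \xbold_1 \sim \xbold_0^E \xbold_1^E$ in $\F_{\{\xbold_0, \xbold_1\}}$; multiplying on the right by $\xbold_1^{-1}$ shows that $\xbold_1^{E-1}$ and $\xbold_0^E$ commute, and since nontrivial powers of distinct free generators never commute, this forces $E \in \{0, 1\}$. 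In particular, if $k = 1$ then $E = e_1 \in \{0,1\}$, and since $e_1 \neq 0$ we conclude $e_1 = 1$.

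The real work is to bound the number of blocks, i.e. to prove $k \leq 1$. Since $AB \sim CA$ and reduced length is well-defined in a free group, equating $|AB| = |A| + |B| - 2p$ with $|CA| = |C| + |A| - 2q$, where $p$ and $q$ are the cancellation lengths at the single junctions of the products $AB$ and $CA$, and using $|B| = |A| + 2k$ and $|C| = |A|$, gives the clean identity $p = q + k$. I would then bound $p - q$ directly by comparing the two junctions: in both products the relevant boundary is $w_k \mid w_0$, so the two cancellations begin identically by consuming a common $\y$-overlap. In $CA$ the next letters on either side are $\xbold_0^{\pm 1}$ and $\xbold_1^{\pm 1}$, which can never cancel; in $AB$ they are $\xbold_1^{\pm 1}$ (from the last block of $A$) and $\xbold_1^{-1}$ (heading $B$'s first conjugate block), which can cancel by at most a single pair before again meeting an $\xbold_0^{\pm 1}$ or a $\y$. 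A case analysis on whether $w_0, w_k$ cancel partially, one-sidedly, or mutually, together with the sign of $e_k$, shows that $p - q \leq 1$ in every case. Combined with $p = q + k$, this forces $k \leq 1$, which together with the previous paragraph completes the proof.

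The main obstacle is this last step: carefully establishing $p - q \leq 1$. The subtlety is that the extra $2k$ letters of $B$ could a priori be absorbed by a cancellation in $AB$ that has no counterpart in $CA$, and ruling this out requires tracking exactly how far the cancellation at the $w_k \mid w_0$ junction can propagate past the $\y$-overlap, including the degenerate cases where $w_0$ and/or $w_k$ is empty, where they cancel completely against each other, and where the sign of the terminal exponent $e_k$ decides whether the stray $\xbold_1 / \xbold_1^{-1}$ pair cancels at all. The two structural facts that keep the extra cancellation to at most one letter are that $\xbold_0$ and $\xbold_1$ are distinct generators, so that $\xbold_0$-blocks and $\xbold_1$-blocks never interact, and that the interior blocks $w_1, \ldots, w_{k-1}$ are nonempty when $k \geq 2$, so that the cancellation cannot tunnel into a second $\xbold$-block.
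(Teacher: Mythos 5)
Your proposal is correct, but it takes a genuinely different route from the paper's proof. The paper argues via a positional invariant on reduced words: writing the hypothesis as $AB \sim CA$ in your notation, it observes that the reduced form of $CA$ has every surviving $\xbold_0$ to the left of every surviving $\xbold_1$ (since $C$ contains only $\xbold_0$'s and $A$ only $\xbold_1$'s, and junction cancellation cannot mix them), whereas if $s$ had two or more occurrences of $\xbold$, the reduced form of $AB$ would retain an $\xbold_1$ from $A$ (at most one can cancel against the $\xbold_1^{-1}$ heading the first conjugate block of $B$) to the left of a surviving $\xbold_0$ from $B$; since congruent reduced words are identical, this is a contradiction, and the exponent claim is then settled by substituting into $s \equiv t_1 \cdot \xbold^{-1} \cdot t_2$ and comparing reductions directly. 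You instead extract the quantitative identity $p = q + k$ from equality of reduced lengths and then bound the junction cancellation by $p - q \leq 1$; I checked your case analysis (partial overlap, one-sided consumption of $w_k$ or $w_0$, mutual annihilation $w_0 = w_k^{-1}$ split by the sign of $e_k$) and it does go through, with the two structural facts you name --- distinct generators never cancel, and interior $\y$-blocks are nonempty --- doing exactly the work you assign them. Your approach makes explicit and rigorous what the paper compresses into ``it is not difficult to see,'' at the cost of a longer bookkeeping argument; and your retraction trick for the exponent (killing the $\y_j$ and invoking the fact that nontrivial powers of distinct free generators never commute) is slicker than the paper's explicit substitution, cleanly decoupling the exponent-sum constraint $E \in \{0,1\}$ from the occurrence count. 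One cosmetic point: $B$ as a raw substitution instance is not literally reduced; what your length count $|B| = L + N + 2k$ refers to is its reduced form, in which each block $\xbold^{e_i}$ has telescoped to $\xbold_1^{-1}\xbold_0^{e_i}\xbold_1$ --- worth stating precisely, since the identity $p = q + k$ requires multiplying reduced representatives.
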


\begin{proof}
Assume the hypothesis. Then it is not difficult to see that if $s$ had at least two occurrences of $\xbold$, then the reduction of the term on the left side would have an occurrence of $\xbold_1$ to the left of an occurrence of $\xbold_0$, while the reduction of the term on the right side would have all occurrences of $\xbold_1$ to the right of all occurrences of $\xbold_0$. But if these terms are congruent (modulo $\sim_\Grp$), then their reductions must be equal, which is impossible, as just shown. So $s$ has at most one occurrence of $\xbold$.  

Lastly, \emph{if} the reduced term $s$ contains an occurrence of $\xbold$, then this unique occurrence must have exponent $1$. For suppose otherwise; then $s \equiv t_1 \cdot \xbold^{-1} \cdot t_2$ for some reduced (possibly empty) words $t_1, t_2 \in \Term^c(\Sigma_\Grp(\y_1, \ldots, \y_n))$. By the assumed congruence, we then have 
\[ t_1 \cdot \xbold_1^{-1} \cdot t_2 \cdot t_1 \cdot \xbold_1^{-1} \cdot \xbold_0^{-1} \cdot \left(\xbold_1^{-1}\right)^{-1} \cdot t_2 \sim t_1 \cdot \xbold_0^{-1} \cdot t_2 \cdot t_1 \cdot \xbold_1^{-1} \cdot t_2. \] However, even if $t_2 \cdot t_1 \sim e$, it is easy to see that the reductions of these terms will not be identical, and hence these terms cannot be congruent (modulo $\sim$). Therefore, if the reduced term $s$ contains an occurrence of $\xbold$, then this occurrence must have exponent $1$.
\end{proof}

\noindent We can now characterize the isotropy group of the free quandle on $n$ generators $\y_1, \ldots, \y_n$. Recall from Section 1 that if $\mathsf{Q}_n$ is the free quandle on $n$ generators $\y_1, \ldots, \y_n$, then the logical isotropy group $G_{\T_\Quandle}(\mathsf{Q}_n)$ of $\mathsf{Q}_n$ is (isomorphic to) the group of all elements
\[ [t] \in \Term^c(\Sigma({\xbold, \y_1, \ldots, \y_n}))/{\sim_\Quandle} \] that are invertible and commute generically with the function symbols $\lhd, \lhd^{-1}$, in the sense that there is some $s \in \Term^c(\Sigma({\xbold, \y_1, \ldots, \y_n}))$ with
\[ \T_\Quandle(\xbold, \y_1, \ldots, \y_n) \vdash t[s/\xbold] = \xbold = s[t/\xbold], \]
\[ \T_\Quandle(\xbold_0, \xbold_1, \y_1, \ldots, \y_n) \vdash t[\xbold_0 \lhd \xbold_1/\xbold] = t[\xbold_0/\xbold] \lhd t[\xbold_1/\xbold], \] and
\[ \T_\Quandle(\xbold_0, \xbold_1, \y_1, \ldots, \y_n) \vdash t[\xbold_0 \lhd^{-1} \xbold_1/\xbold] = t[\xbold_0/\xbold] \lhd^{-1} t[\xbold_1/\xbold]. \]

\begin{theorem}[\textbf{Isotropy Group of a Free Quandle}]
\label{quandleisotropy}
Let $\Q_n$ be the free quandle on $n$ generators $\y_1, \ldots, \y_n$. Then for any $t \in \Term^c(\Sigma(\xbold, \y_1, \ldots, \y_n))$, we have \[ [t] \in G_{\T_\Quandle}(\Q_n) \] iff there are $m \geq 0$ and $1 \leq i_1, \ldots, i_m \leq n$ and $\epsilon_1, \ldots, \epsilon_m = \pm 1$ such that 
\[ [t] = \left[\xbold \lhd^{\epsilon_1} \y_{i_1} \lhd^{\epsilon_2} \ldots \lhd^{\epsilon_m} \y_{i_m}\right] \] and the corresponding group word $\y_{i_1}^{\epsilon_1} \ldots \y_{i_m}^{\epsilon_m}$ is reduced (we have written $\xbold \lhd^{\epsilon_1} \y_{i_1} \lhd^{\epsilon_2} \ldots \lhd^{\epsilon_m} \y_{i_m}$ instead of the more cumbersome $(\ldots((\xbold \lhd^{\epsilon_1} \y_{i_1}) \lhd^{\epsilon_2} \y_{i_2}) \ldots ) \lhd^{\epsilon_m} \y_{i_m}$, i.e. bracketing of quandle terms is assumed to associate to the left).  
\end{theorem}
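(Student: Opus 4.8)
The plan is to translate the three membership conditions for $G_{\T_\Quandle}(\Q_n)$ into statements about the free group $\F_{\{\xbold,\y_1,\ldots,\y_n\}}$ via the conjugation map $\E$ and Dehornoy's theorem ($\T_\Quandle(X)\vdash s=t$ iff $\E(s)\sim_\Grp\E(t)$), and then to exploit the preceding lemma. The computational backbone, which I would establish first by a short induction on $m$ using the clauses $\E(s\lhd t)=\E(t)^{-1}\E(s)\E(t)$ and $\E(s\lhd^{-1}t)=\E(t)\E(s)\E(t)^{-1}$, is that $\E(\xbold\lhd^{\epsilon_1}\y_{i_1}\cdots\lhd^{\epsilon_m}\y_{i_m})\sim_\Grp w^{-1}\xbold w$, where $w:=\y_{i_1}^{\epsilon_1}\cdots\y_{i_m}^{\epsilon_m}$ is the associated group word. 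Every step below runs through this dictionary entry, which says that left-associated quandle conjugates correspond to group-conjugation by $w$.

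For the backward implication I would verify invertibility and generic commutation directly through $\E$. Invertibility holds with inverse term $\xbold\lhd^{-\epsilon_m}\y_{i_m}\cdots\lhd^{-\epsilon_1}\y_{i_1}$, whose image is $w\xbold w^{-1}$: the substitution lemma and Dehornoy's theorem reduce $t[s/\xbold]\sim_\Quandle\xbold$ to the identity $w^{-1}(w\xbold w^{-1})w\sim_\Grp\xbold$, and symmetrically for $s[t/\xbold]$. For generic commutation with $\lhd$, substituting $\E(t)=w^{-1}\xbold w$ collapses both $\E(t)[\xbold_1^{-1}\xbold_0\xbold_1/\xbold]$ and $(\E(t)[\xbold_1/\xbold])^{-1}\,\E(t)[\xbold_0/\xbold]\,\E(t)[\xbold_1/\xbold]$ to the common value $w^{-1}\xbold_1^{-1}\xbold_0\xbold_1 w$; the $\lhd^{-1}$ case is identical, and since $w$ is reduced and $\xbold$-free, the word $w^{-1}\xbold w$ is automatically reduced, so no subtlety arises.

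The forward implication carries the real content, and here the preceding lemma does the heavy lifting. Writing $s$ for the reduced form of $\E(t)$, I would push the generic-commutation-with-$\lhd$ hypothesis through $\E$, the substitution lemma, and Dehornoy's theorem to obtain $s[\xbold_1^{-1}\xbold_0\xbold_1/\xbold]\sim_\Grp s[\xbold_1/\xbold]^{-1}\,s[\xbold_0/\xbold]\,s[\xbold_1/\xbold]$; left-multiplying by $s[\xbold_1/\xbold]$ casts this in exactly the hypothesis of the preceding lemma, so $s$ has at most one occurrence of $\xbold$, necessarily with exponent $1$. Invertibility rules out zero occurrences, since otherwise it would force $s\sim_\Grp\xbold$ for a reduced word containing no $\xbold$, which is impossible. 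Hence $s\equiv u\cdot\xbold\cdot v$ with $u,v$ reduced over the $\y$'s. Returning to the full commutation equation and substituting this form gives $u\,\xbold_1^{-1}\xbold_0\xbold_1\,v\sim_\Grp v^{-1}\,\xbold_1^{-1}\xbold_0\xbold_1\,v$; cancelling $\xbold_1^{-1}\xbold_0\xbold_1\,v$ on the right yields $u\sim_\Grp v^{-1}$. Thus $\E(t)\sim_\Grp v^{-1}\xbold v$, which by the backbone equals $\E(\xbold\lhd^{\epsilon_1}\y_{i_1}\cdots\lhd^{\epsilon_m}\y_{i_m})$ for $w=v$ reduced, and Dehornoy's theorem then delivers $[t]=[\xbold\lhd^{\epsilon_1}\y_{i_1}\cdots\lhd^{\epsilon_m}\y_{i_m}]$, as required.

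I expect the main obstacle to be bookkeeping rather than conceptual. The delicate points are keeping straight which free group each congruence inhabits, justifying that replacing $\E(t)$ by its reduced form $s$ is harmless (substitution into $\xbold$ is a group homomorphism and hence respects $\sim_\Grp$), and confirming that the identity I feed the preceding lemma is literally its stated hypothesis after the left-multiplication step. The preceding lemma itself is the genuine technical heart of the argument, but since it is already in hand, the remaining work is simply to channel the generic-commutation and invertibility conditions into the correct free-group identities; notably, only the $\lhd$-commutation condition together with invertibility is needed for the forward direction.
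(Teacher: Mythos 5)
Your strategy coincides with the paper's own proof: the same dictionary $\E(\xbold\lhd^{\epsilon_1}\y_{i_1}\cdots\lhd^{\epsilon_m}\y_{i_m})\sim w^{-1}\xbold w$ drives the backward direction, and the forward direction proceeds, as in the paper, by pushing the $\lhd$-commutation condition through $\E$ and Dehornoy's theorem, left-multiplying by $s[\xbold_1/\xbold]$ to land exactly in the hypothesis of the preceding lemma, and using invertibility to rule out zero occurrences of $\xbold$ in the reduced form $s$ of $\E(t)$. All of those steps are correct and match the paper.

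However, there is one genuine flaw in your forward direction. Having written $s\equiv u\cdot\xbold\cdot v$ with $u,v$ reduced words over $\y_1,\ldots,\y_n$, you assert that substituting this form into the commutation identity yields
\[ u\,\xbold_1^{-1}\xbold_0\xbold_1\,v \;\sim\; v^{-1}\,\xbold_1^{-1}\xbold_0\xbold_1\,v, \]
and you then cancel on the right to conclude $u\sim v^{-1}$. But the actual right-hand side is
\[ (u\xbold_1 v)^{-1}\cdot(u\xbold_0 v)\cdot(u\xbold_1 v)\;\sim\; v^{-1}\,\xbold_1^{-1}\,\xbold_0\,(vu)\,\xbold_1\,v, \]
since only the inner $u^{-1}u$ cancels; the factor $vu$ sitting between $\xbold_0$ and $\xbold_1$ does not disappear. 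Erasing it amounts to assuming $vu\sim e$, i.e. $u\sim v^{-1}$, which is precisely what you are trying to prove, so the step as written is circular. The conclusion is nevertheless true, but it requires the argument the paper supplies: because $u$ and $v$ contain no occurrences of $\xbold_0$ or $\xbold_1$, both sides become reduced words once $vu$ is replaced by its reduction, and uniqueness of reduced forms in the free group on $\{\xbold_0,\xbold_1,\y_1,\ldots,\y_n\}$ forces the subword strictly between $\xbold_0$ and the following $\xbold_1$ on the right to be empty, i.e. $vu\sim e$, whence $u\sim v^{-1}$ and $s\sim v^{-1}\xbold v$. (The paper runs this comparison on the left-multiplied identity, $t_1\xbold_1 t_2 t_1\xbold_1^{-1}\xbold_0\xbold_1 t_2\sim t_1\xbold_0 t_2 t_1\xbold_1 t_2$, and concludes $t_2 t_1\sim e$.) With that single repair, your proof goes through and is the paper's proof.
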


\begin{proof}
We first show that the $\sim_\Quandle$-class of any term of the described form belongs to $G_{\T_\Quandle}(\Q_n)$. So let \[ t := \xbold \lhd^{\epsilon_1} \y_{i_1} \lhd^{\epsilon_2} \ldots \lhd^{\epsilon_m} \y_{i_m} \] for some $m \geq 0$ and $1 \leq i_1, \ldots, i_m \leq n$ and $\epsilon_1, \ldots, \epsilon_m = \pm 1$. We must show that $[t]$ is invertible and commutes generically with the quandle operations. For invertibility, we show that the $\sim_\Quandle$-class of 
\[ t^{-1} := \xbold \lhd^{-\epsilon_m} y_{i_m} \lhd^{-\epsilon_{m-1}}\ldots \lhd^{-\epsilon_1} y_{i_1} \]
is the inverse of $[t]$. So we must show that $t[t^{-1}/\xbold] \sim_\Quandle \xbold$ and $t^{-1}[t/\xbold] \sim_\Quandle \xbold$. Since the two claims have analogous proofs, we will only prove the first. 

By Theorem 1, it suffices to show that $\E(t[t^{-1}/\xbold]) \sim \E(\xbold) \equiv \xbold$ in the free group on $\xbold, \y_1, \ldots, \y_n$. Using Lemma 1, we have
\begin{align*}
\E(t[t^{-1}/\xbold])	&\equiv \E(t)[\E\left(t^{-1}\right)/\xbold] \\
			&\sim \left(\y_{i_m}^{-\epsilon_m} \ldots \y_{i_1}^{-\epsilon_1} \xbold \y_{i_1}^{\epsilon_1} \ldots \y_{i_m}^{\epsilon_m}\right)[\E\left(t^{-1}\right)/\xbold] \\
			&\sim \left(\y_{i_m}^{-\epsilon_m} \ldots \y_{i_1}^{-\epsilon_1} \xbold \y_{i_1}^{\epsilon_1} \ldots \y_{i_m}^{\epsilon_m}\right)\left[\y_{i_1}^{\epsilon_1} \ldots \y_{i_m}^{\epsilon_m} \xbold \y_{i_m}^{-\epsilon_m} \ldots \y_{i_1}^{-\epsilon_1}/\xbold\right] \\
			&\equiv \y_{i_m}^{-\epsilon_m} \ldots \y_{i_1}^{-\epsilon_1}\y_{i_1}^{\epsilon_1} \ldots \y_{i_m}^{\epsilon_m}\xbold \y_{i_m}^{-\epsilon_m} \ldots \y_{i_1}^{-\epsilon_1}\y_{i_1}^{\epsilon_1} \ldots \y_{i_m}^{\epsilon_m} \\
			&\sim \xbold \\
			&\equiv \E(\xbold),
\end{align*}

\noindent as desired. 

Now we show that $[t]$ commutes generically with the quandle operations. Since the proofs of both claims are analogous, we only prove that $[t]$ commutes generically with $\lhd$. By Theorem 1, it suffices to show that 
\[ \E(t[\xbold_0 \lhd \xbold_1/\xbold]) \sim \E(t[\xbold_0/\xbold] \lhd t[\xbold_1/\xbold]) \] in the free group on $\xbold_0, \xbold_1, \y_1, \ldots, \y_n$. Starting from the right side, we have
\begin{align*}
&\ \ \ \ \E(t[\xbold_0/\xbold] \lhd t[\xbold_1/\xbold]) \\	
&= \E(t[\xbold_1/\xbold])^{-1} \cdot \E(t[\xbold_0/\xbold]) \cdot \E(t[\xbold_1/\xbold]) \\
&\sim \E(t)^{-1}[\xbold_1/\xbold] \cdot \E(t[\xbold_0/\xbold]) \cdot \E(t[\xbold_1/\xbold]) \\
&\sim \y_{i_m}^{-\epsilon_m} \ldots \y_{i_1}^{-\epsilon_1} \xbold_1^{-1} \y_{i_1}^{\epsilon_1} \ldots \y_{i_m}^{\epsilon_m} \cdot \y_{i_m}^{-\epsilon_m} \ldots \y_{i_1}^{-\epsilon_1} \xbold_0 \y_{i_1}^{\epsilon_1} \ldots \y_{i_m}^{\epsilon_m} \cdot \y_{i_m}^{-\epsilon_m} \ldots \y_{i_1}^{-\epsilon_1} \xbold_1 \y_{i_1}^{\epsilon_1} \ldots \y_{i_m}^{\epsilon_m} \\
&\sim \y_{i_m}^{-\epsilon_m} \ldots \y_{i_1}^{-\epsilon_1} \xbold_1^{-1}\xbold_0\xbold_1 \y_{i_1}^{\epsilon_1} \ldots \y_{i_m}^{\epsilon_m} \\
&\equiv \left(\y_{i_m}^{-\epsilon_m} \ldots \y_{i_1}^{-\epsilon_1} \xbold \y_{i_1}^{\epsilon_1} \ldots \y_{i_m}^{\epsilon_m}\right)[\xbold_1^{-1}\xbold_0\xbold_1/\xbold] \\
&\sim \E(t)[\xbold_1^{-1}\xbold_0\xbold_1/\xbold] \\
&= \E(t)[\E(\xbold_0 \lhd \xbold_1)/\xbold] \\
&\sim \E(t[\xbold_0 \lhd \xbold_1/\xbold]),
\end{align*}
as desired (where we applied Lemma 1 to obtain the final congruence). This completes the proof that $[t] \in G_{\T_\Quandle}(\Q_n)$.

Now suppose that $t \in \Term^c(\Sigma(\xbold, \y_1, \ldots, \y_n))$ and $[t] \in G_{\T_\Quandle}(\Q_n)$. We show that $t$ can be assumed to have the form described in the statement of the theorem. Since $[t] \in G_{\T_\Quandle}(\Q_n)$, we know that $[t]$ commutes generically with the quandle operations. In particular, we have
\[ \T_\Quandle(\xbold_0, \xbold_1, \y_1, \ldots, \y_n) \vdash t[\xbold_0 \lhd \xbold_1/\xbold] = t[\xbold_0/\xbold] \lhd t[\xbold_1/\xbold].\]
By Theorem 1, it then follows that the following relation holds in the free group on the same generators:
\[ \E(t[\xbold_0 \lhd \xbold_1/\xbold]) \sim \E(t[\xbold_0/\xbold] \lhd t[\xbold_1/\xbold]). \]
Then since we have 
\begin{align*}
\E(t[\xbold_0/\xbold] \lhd t[\xbold_1/\xbold])	&= \E(t[\xbold_1/\xbold])^{-1} \cdot \E(t[\xbold_0/\xbold]) \cdot \E(t[\xbold_1/\xbold]) \\
					&\sim \E(t)^{-1}[\xbold_1/\xbold] \cdot \E(t)[\xbold_0/\xbold] \cdot \E(t)[\xbold_1/\xbold], 
\end{align*}
and since (by Lemma 1) we have 
\begin{align*}
\E(t[\xbold_0 \lhd \xbold_1/\xbold])	&\sim \E(t)[\E(\xbold_0 \lhd \xbold_1)/\xbold] \\
					&= \E(t)[\xbold_1^{-1}\xbold_0\xbold_1/\xbold],
\end{align*}
it follows that we have
\[ \E(t)[\xbold_1^{-1}\xbold_0\xbold_1/\xbold] \sim \E(t)^{-1}[\xbold_1/\xbold] \cdot \E(t)[\xbold_0/\xbold] \cdot \E(t)[\xbold_1/\xbold]. \]
Now let $s$ be the unique reduced word congruent to $\E(t) \in \Term^c(\Sigma_\Grp(\xbold, \y_1, \ldots, \y_n))$, so that $\E(t) \sim s$. Then we obtain 
\[ s[\xbold_1^{-1}\xbold_0\xbold_1/\xbold] \sim s^{-1}[\xbold_1/\xbold] \cdot s[\xbold_0/\xbold] \cdot s[\xbold_1/\xbold], \]
which implies 
 \[ s[\xbold_1/\xbold] \cdot s[\xbold_1^{-1}\xbold_0\xbold_1/\xbold] \sim s[\xbold_0/\xbold] \cdot s[\xbold_1/\xbold]. \] Then by Lemma 2, since $s$ is reduced, it follows that $s$ has at most one occurrence of $\xbold$, which will have exponent $1$. Now we show that $s$ has \emph{at least} one, and hence \emph{exactly one}, occurrence of $\xbold$. Since $[t] \in G_{\T_\Quandle}(\Q_n)$, it is invertible, and hence there is some $t^{-1} \in \Term^c(\Sigma(\xbold, \y_1, \ldots, \y_n))$ such that \[ \T_\Quandle(\xbold, \y_1, \ldots, \y_n) \vdash t[t^{-1}/\xbold] = \xbold = t^{-1}[t/\xbold]. \] Then by Theorem 1 and Lemma 1, it follows that \[ \xbold \equiv \E(\xbold) \sim \E(t[t^{-1}/\xbold]) \sim \E(t)[\E(t^{-1})/\xbold]. \] Since $\E(t) \sim s$, it then follows that \[ s[\E(t^{-1})/\xbold] \sim \xbold. \] If $s$ did \emph{not} have at least one occurrence of $\xbold$, then we would have $s[\E(t^{-1})/\xbold] \equiv s$, so that we could deduce $s \sim \xbold$. But then since $s$ and $\xbold$ are reduced, this would imply that $s \equiv \xbold$, contradicting the assumption that $s$ has no occurrence of $\xbold$. So it follows that $s$ has at least one, and hence exactly one, occurrence of $\xbold$. So then $s \equiv t_1 \cdot \xbold \cdot t_2$ for some reduced (possibly empty) words $t_1, t_2 \in \Term^c(\Sigma_\Grp(\y_1, \ldots, \y_n))$. From \[ s[\xbold_1/\xbold] \cdot s[\xbold_1^{-1}\xbold_0\xbold_1/\xbold] \sim s[\xbold_0/\xbold] \cdot s[\xbold_1/\xbold] \] we then infer
\[ t_1 \cdot \xbold_1 \cdot t_2 \cdot t_1 \cdot \xbold_1^{-1} \xbold_0 \xbold_1 \cdot t_2 \sim t_1 \cdot \xbold_0 \cdot t_2 \cdot t_1 \cdot \xbold_1 \cdot t_2. \] So the reductions of both words are identical, which implies that $t_2 \cdot t_1 \sim e$, so that $t_1 \sim t_2^{-1}$, and hence $s \equiv t_1 \cdot \xbold \cdot t_2 \sim t_1 \cdot \xbold \cdot t_1^{-1}$. So we now have $\E(t) \sim s \sim t_1 \cdot \xbold \cdot t_1^{-1}$ for some reduced word $t_1 \in \Term^c(\Sigma_\Grp(\y_1, \ldots, \y_n))$.
 
Now let $t_1 \equiv \y_{i_1}^{\epsilon_1} \ldots \y_{i_m}^{\epsilon_m}$ for some $m \geq 0$, with $\epsilon_j = \pm 1$ and $1 \leq i_j \leq n$ for each $1 \leq j \leq m$. Then we have $t_1^{-1} \sim \y_{i_m}^{-\epsilon_m} \ldots \y_{i_1}^{-\epsilon_1}$, so that \[ \E(t) \sim \y_{i_1}^{\epsilon_1} \ldots \y_{i_m}^{\epsilon_m}\xbold \y_{i_m}^{-\epsilon_m} \ldots \y_{i_1}^{-\epsilon_1}. \] Then we have
 \begin{align*}
 \E(t)	&\sim \y_{i_1}^{\epsilon_1} \ldots \y_{i_m}^{\epsilon_m}\xbold \y_{i_m}^{-\epsilon_m} \ldots \y_{i_1}^{-\epsilon_1} \\
 		&\sim \E(\xbold \lhd^{-\epsilon_m} \y_{i_m} \lhd^{-\epsilon_{m-1}} \ldots \lhd^{-\epsilon_1} \y_{i_1}).	
\end{align*}
By Theorem 1, we then deduce that 
\[ t \ \sim_\Quandle \ \xbold \lhd^{-\epsilon_m} \y_{i_m} \lhd^{-\epsilon_{m-1}} \ldots \lhd^{-\epsilon_1} \y_{i_1}, \]
so that $t$ is $\sim_\Quandle$-congruent to a term of the desired form (since $t_1 \equiv \y_{i_1}^{\epsilon_1} \ldots \y_{i_m}^{\epsilon_m}$ is reduced, which implies that $\y_{i_m}^{-\epsilon_m} \ldots \y_{i_1}^{-\epsilon_1}$ is reduced). This completes the proof of the theorem.
\end{proof}

\noindent Given this logical description of the isotropy group of the free quandle on $n$ generators, we now give a more \emph{algebraic} description of this isotropy group:

\begin{corollary}
\label{quandleisotropycor}
Let $\F_n$ be the free group on $n$ generators $\y_1, \ldots, \y_n$. Then \[ G_{\T_\Quandle}(\Q_n) \cong \F_{n}. \] That is, the logical isotropy group of the free quandle on $n$ generators is isomorphic to the free group on $n$ generators. 
\end{corollary}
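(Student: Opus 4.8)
The plan is to read off the isomorphism directly from the normal form established in Theorem \ref{quandleisotropy}. The proof of that theorem shows that every $[t] \in G_{\T_\Quandle}(\Q_n)$ satisfies $\E(t) \sim t_1 \cdot \xbold \cdot t_1^{-1}$ for a reduced word $t_1 \in \Term^c(\Sigma_\Grp(\y_1, \ldots, \y_n))$, i.e. for an element $[t_1] \in \F_n$. Since $t_1$ involves none of the constant $\xbold$, the word $t_1 \cdot \xbold \cdot t_1^{-1}$ is already reduced, so $t_1$ is the \emph{unique} reduced word with this property; and by Theorem 1 the class $[t]$ depends only on the $\sim$-class of $\E(t)$. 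Hence I would define $\Phi\colon G_{\T_\Quandle}(\Q_n) \to \F_n$ by $\Phi([t]) := [t_1]$, and observe that it is well-defined and injective: the theorem recovers $t$ from $t_1$ up to $\sim_\Quandle$ (if $t_1 = \y_{i_1}^{\epsilon_1} \cdots \y_{i_m}^{\epsilon_m}$ then $t \sim_\Quandle \xbold \lhd^{-\epsilon_m} \y_{i_m} \lhd \cdots \lhd^{-\epsilon_1} \y_{i_1}$), so distinct reduced words yield distinct classes.

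Next I would check that $\Phi$ is a group homomorphism. The operation on $G_{\T_\Quandle}(\Q_n)$ is substitution into $\xbold$ with unit $[\xbold]$, so $[t]\cdot[s] = [t[s/\xbold]]$. Given $\E(t) \sim t_1 \cdot \xbold \cdot t_1^{-1}$ and $\E(s) \sim s_1 \cdot \xbold \cdot s_1^{-1}$, Lemma 1 yields $\E(t[s/\xbold]) \sim \E(t)[\E(s)/\xbold]$, and since $t_1$ contains no $\xbold$ this reduces to $(t_1 s_1)\cdot \xbold \cdot (t_1 s_1)^{-1}$. Thus the conjugating word of the product is $t_1 s_1$, giving $\Phi([t]\cdot[s]) = [t_1 s_1] = \Phi([t])\Phi([s])$; and $\E(\xbold) \equiv \xbold = e \cdot \xbold \cdot e^{-1}$ shows $\Phi([\xbold]) = [e]$. (Should the intended multiplication convention be the opposite one, the identical computation exhibits $\Phi$ as an anti-homomorphism, which is equally good since $\F_n \cong \F_n^{\mathrm{op}}$.)

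Finally I would verify surjectivity: for any reduced word $w = \y_{j_1}^{\eta_1} \cdots \y_{j_k}^{\eta_k}$ representing an element of $\F_n$, Theorem \ref{quandleisotropy} guarantees that $[\,\xbold \lhd^{-\eta_k} \y_{j_k} \lhd \cdots \lhd^{-\eta_1} \y_{j_1}\,]$ lies in $G_{\T_\Quandle}(\Q_n)$, and a direct computation with $\E$ shows its conjugating word is exactly $w$, so $\Phi([\cdot]) = [w]$. Hence $\Phi$ is a bijective homomorphism, i.e. an isomorphism, and $G_{\T_\Quandle}(\Q_n) \cong \F_n$. I expect no genuine obstacle here, since the substantive work is already contained in Theorem \ref{quandleisotropy}; the only points demanding care are bookkeeping — confirming that the normal form gives a \emph{bijective} parametrization (so that $\Phi$ is injective, not merely that $G$ surjects onto the listed forms) and fixing the multiplication convention so that the conjugating words compose in the correct order.
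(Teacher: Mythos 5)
Your proof is correct, but it runs in the opposite direction from the paper's and distributes the work differently. The paper constructs $\phi \colon \F_n \to G_{\T_\Quandle}(\Q_n)$ via the universal property of the free group, sending $[\y_i] \mapsto [\xbold \lhd \y_i]$, so the homomorphism property costs nothing; surjectivity is read off from Theorem \ref{quandleisotropy}, and the substantive step is injectivity, proved by pushing an equation $\phi([u]) = \phi([v])$ through Dehornoy's theorem into the free group on $\xbold, \y_1, \ldots, \y_n$ and cancelling until the two reduced words are forced to be identical. You instead define $\Phi \colon G_{\T_\Quandle}(\Q_n) \to \F_n$ by extracting the conjugating word $t_1$ from the normal form $\E(t) \sim t_1 \cdot \xbold \cdot t_1^{-1}$; as you observe, $t_1 \cdot \xbold \cdot t_1^{-1}$ is already reduced because $t_1$ omits $\xbold$, so well-definedness and injectivity follow at once from uniqueness of reduced words together with Dehornoy's theorem, and the work shifts to multiplicativity, which you verify correctly with the substitution lemma (Lemma 1): conjugating words compose, $\E(t[s/\xbold]) \sim (t_1 s_1) \cdot \xbold \cdot (t_1 s_1)^{-1}$. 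Two remarks. First, with the multiplication convention the paper actually uses, namely $[t] \cdot [s] = [t[s/\xbold]]$ (visible in the proofs of this corollary and of Corollary \ref{racksisotropycor}), your $\Phi$ comes out as a genuine homomorphism, so the anti-homomorphism hedge is unnecessary; it is the paper's rack-case map in Corollary \ref{racksisotropycor} that turns out to be an anti-isomorphism. Second, your $\Phi$ is not the inverse of the paper's $\phi$: since $\E(\xbold \lhd \y_i) \sim \y_i^{-1} \xbold \y_i$, one gets $\Phi(\phi([\y_i])) = [\y_i^{-1}]$, so the two isomorphisms differ by the automorphism of $\F_n$ inverting each generator --- harmless, but worth noticing. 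The net trade-off: the paper's route pays for injectivity with a free-group cancellation argument but gets the group law for free, while yours pays a short computation for the group law but makes injectivity transparent and exhibits the isomorphism concretely as ``isotropy element $\mapsto$ its conjugator under Dehornoy's embedding''.
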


\begin{proof}
Since $\F_n$ is the free group on $n$ generators and $G_{\T_\Quandle}(\Q_n)$ is a group, there is a unique group homomorphism \[ \phi : \F_n \to G_{\T_\Quandle}(\Q_n) \] with \[ \phi([\y_i]) = [\xbold \lhd \y_i] \] for each $1 \leq i \leq n$, since $[\xbold \lhd \y_i] \in G_{\T_\Quandle}(\Q_n)$ by Theorem 2. So it remains to show that $\phi$ is a bijection. Note first that for any $1 \leq i \leq n$ we have 
\begin{align*}
\phi\left(\left[\y_i^{-1}\right]\right)	&= \phi\left(\left[\y_i\right]\right)^{-1} \\
			&= [\xbold \lhd \y_i]^{-1} \\
			&= [\xbold \lhd^{-1} \y_i]
\end{align*}
(cf. the proof of Theorem 2 for the last equality), and hence for any $m \geq 1$ with $\epsilon_j = \pm 1$ and $1 \leq i_j \leq n$ for each $1 \leq j \leq m$, we have (since the product in $G_{\T_\Quandle}(\Q_n)$ is given by substitution into $\xbold$)
\begin{align*}
\phi\left(\left[\y_{i_1}^{\epsilon_1} \ldots \y_{i_m}^{\epsilon_m}\right]\right)	&= \phi\left(\left[\y_{i_1}^{\epsilon_1}\right]\right) \cdot \ldots \cdot \phi\left(\left[\y_{i_m}^{\epsilon_m}\right]\right) \\
								&= [\xbold \lhd^{\epsilon_1} \y_{i_1}] \cdot \ldots \cdot [\xbold \lhd^{\epsilon_m} \y_{i_m}] \\
								&= [\xbold \lhd^{\epsilon_m} \y_{i_m} \lhd^{\epsilon_{m-1}} \ldots \lhd^{\epsilon_1} \y_{i_1}].
\end{align*}
Now, that $\phi$ is surjective is obvious, because by Theorem 2, if $[t] \in G_{\T_\Quandle}(\Q_n)$, then either $[t] = [\xbold]$, in which case we have $\phi([e]) = [\xbold] = [t]$ (because $\phi$ is a group homomorphism and $[\xbold]$ is the identity element of $G_{\T_\Quandle}(\Q_n)$), or otherwise there is some $m \geq 1$ such that
\[ [t] = [\xbold \lhd^{\epsilon_1} \y_{i_1} \lhd^{\epsilon_2} \ldots \lhd^{\epsilon_m} \y_{i_m}], \] with $\epsilon_j = \pm 1$ and $1 \leq i_j \leq n$ for all $1 \leq j \leq m$. But then we have
\[ \phi\left(\left[\y_{i_m}^{\epsilon_m} \ldots \y_{i_1}^{\epsilon_1}\right]\right) = [t], \] as desired. \par

To prove that $\phi$ is injective, let $\y_{i_1}^{\epsilon_1} \ldots \y_{i_m}^{\epsilon_m}$ and $\y_{j_1}^{\delta_1} \ldots \y_{j_p}^{\delta_p}$ be reduced group words over the generators $\y_1, \ldots, \y_n$ with $m, p \geq 1$ (if one of the words is just $e$, then the argument that follows is even easier), and suppose that 
\[ \phi\left(\left[\y_{i_1}^{\epsilon_1} \ldots \y_{i_m}^{\epsilon_m}\right]\right) = \phi\left(\left[\y_{j_1}^{\delta_1} \ldots \y_{j_p}^{\delta_p}\right]\right), \] in order to show that \[ \y_{i_1}^{\epsilon_1} \ldots \y_{i_m}^{\epsilon_m} \sim \y_{j_1}^{\delta_1} \ldots \y_{j_p}^{\delta_p} \] in the free group on $y_1, \ldots, y_n$. The assumption implies that 
\[ \xbold \lhd^{\epsilon_m} y_{i_m} \lhd^{\epsilon_{m-1}} \ldots \lhd^{\epsilon_1} y_{i_1} \ \sim_\Quandle \ \xbold \lhd^{\delta_p} y_{j_p} \lhd^{\delta_{p-1}} \ldots \lhd^{\delta_1} y_{j_1} \] 
in the free quandle on $\xbold, \y_1, \ldots, \y_n$. By Theorem 1, this in turn implies that 
\[ \E\left(\xbold \lhd^{\epsilon_m} \y_{i_m} \lhd^{\epsilon_{m-1}} \ldots \lhd^{\epsilon_1} \y_{i_1}\right) \sim \E\left(\xbold \lhd^{\delta_p} \y_{j_p} \lhd^{\delta_{p-1}} \ldots \lhd^{\delta_1} \y_{j_1}\right) \] in the free group on $\xbold, \y_1, \ldots, \y_n$, i.e. that
\[ \y_{i_1}^{-\epsilon_1} \ldots \y_{i_m}^{-\epsilon_m}\xbold \y_{i_m}^{\epsilon_m} \ldots \y_{i_1}^{\epsilon_1} \sim 
\y_{j_1}^{-\delta_1} \ldots \y_{j_p}^{-\delta_p}\xbold \y_{j_p}^{\delta_p} \ldots \y_{j_1}^{\delta_1} \]
in the free group on $\xbold, \y_1, \ldots, \y_n$. This implies that 
\[ \xbold \sim \y_{i_m}^{\epsilon_m} \ldots \y_{i_1}^{\epsilon_1}\y_{j_1}^{-\delta_1} \ldots \y_{j_p}^{-\delta_p}\xbold \y_{j_p}^{\delta_p} \ldots \y_{j_1}^{\delta_1} \y_{i_1}^{-\epsilon_1} \ldots \y_{i_m}^{-\epsilon_m}, \]
which then implies that
\[ \y_{i_m}^{\epsilon_m} \ldots \y_{i_1}^{\epsilon_1} \y_{j_1}^{-\delta_1} \ldots \y_{j_p}^{-\delta_p} \sim e \]
and
\[ \y_{j_p}^{\delta_p} \ldots \y_{j_1}^{\delta_1} \y_{i_1}^{-\epsilon_1} \ldots \y_{i_m}^{-\epsilon_m} \sim e, \] which finally imply that
\[ \y_{i_m}^{\epsilon_m} \ldots \y_{i_1}^{\epsilon_1} \sim \y_{j_p}^{\delta_p} \ldots \y_{j_1}^{\delta_1}. \] 
Since $\y_{i_1}^{\epsilon_1} \ldots \y_{i_m}^{\epsilon_m}$ and $\y_{j_1}^{\delta_1} \ldots \y_{j_p}^{\delta_p}$ are reduced words by assumption, this entails that $\y_{i_m}^{\epsilon_m} \ldots \y_{i_1}^{\epsilon_1}$ and $\y_{j_p}^{\delta_p} \ldots \y_{j_1}^{\delta_1}$ are also reduced words. Therefore, since we are working in the free group on $\y_1, \ldots, \y_n$, this implies that $m = p$ and $\y_{i_k} = \y_{j_k}$ and $\epsilon_k = \delta_k$ for all $1 \leq k \leq m = p$. This proves that $\y_{i_1}^{\epsilon_1} \ldots \y_{i_m}^{\epsilon_m} \sim \y_{j_1}^{\delta_1} \ldots \y_{j_p}^{\delta_p}$, as desired. 
\end{proof}

From our characterization(s) of the logical isotropy groups of the free, finitely generated quandles, we can now deduce characterizations of the \emph{categorical} isotropy groups of these quandles. The proof of the following corollary invokes the two bullet points preceding Definition 1 in Section 1, the characterization given in Theorem \ref{quandleisotropy}, and the fact that reduced group words congruent modulo $\sim_\Grp$ must be identical. 

\begin{corollary}
\label{quandlesisotropycor}
Let $n \geq 0$. 
\begin{enumerate}
\item Let \[ \pi = \left(\pi_h : \cod(h) \to \cod(h)\right)_{\dom(h) = \Q_n} \] be a (not necessarily natural) family of endomorphisms of quandles, indexed by quandle morphisms $h$ with domain $\Q_n$. Then $\pi \in \Z_{\T_\Quandle}(\Q_n)$ iff there is a unique reduced word
\[ \y_{i_1}^{\epsilon_1} \ldots \y_{i_m}^{\epsilon_m} \in \Term^c(\Sigma_\Grp(\y_1, \ldots, \y_n)) \] with the property that for any quandle morphism $h : \Q_n \to Q$ we have
\[ \pi_h(q) = q \lhd^{\epsilon_1} h_{i_1} \lhd^{\epsilon_2} \ldots \lhd^{\epsilon_m} h_{i_m} \in Q. \] 

\item Let $h : \Q_n \to \Q_n$ be a quandle endomorphism. Then $h$ is a categorical inner automorphism iff there is a unique reduced word $\y_{i_1}^{\epsilon_1} \ldots \y_{i_m}^{\epsilon_m} \in \Term^c(\Sigma_\Grp(\y_1, \ldots, \y_n))$ such that 
\[ h([s]) = \left[s \lhd^{\epsilon_1} \y_{i_1} \lhd^{\epsilon_2} \ldots \lhd^{\epsilon_m} \y_{i_m} \right] \in \Q_n \] for any $[s] \in \Q_n$ (so $s \in \Term^c(\Sigma(\y_1, \ldots, \y_n))$). 

\item Let $h : \Q_n \to \Q_n$ be a quandle endomorphism. Then $h$ is a \textbf{categorical} inner automorphism iff $h$ is an \textbf{algebraic} inner automorphism.   
\end{enumerate} \qed
\end{corollary}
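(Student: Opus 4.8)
The plan is to deduce all three parts mechanically from Theorem \ref{quandleisotropy}, together with the two bullet points relating elements of the logical isotropy group $G_{\T_\Quandle}(\Q_n)$ to families $\pi$ and to endomorphisms $h$. For part (1), I would begin from the first of those bullet points: a family $\pi$ lies in $\Z_{\T_\Quandle}(\Q_n)$ if and only if there is a \emph{unique} $[t] \in G_{\T_\Quandle}(\Q_n)$ with $\pi_h = [t]^{Q, h_1, \ldots, h_n}$ for every $h : \Q_n \to Q$. By Theorem \ref{quandleisotropy} every such $[t]$ is of the form $[\xbold \lhd^{\epsilon_1} \y_{i_1} \lhd^{\epsilon_2} \ldots \lhd^{\epsilon_m} \y_{i_m}]$ for a reduced group word $\y_{i_1}^{\epsilon_1} \ldots \y_{i_m}^{\epsilon_m}$, and conversely every such class lies in the isotropy group. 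Unwinding the definition of $[t]^{Q, h_1, \ldots, h_n}$ --- substitute $q$ for $\xbold$ and each $h_{i_j}$ for $\y_{i_j}$, then evaluate in $Q$ --- yields exactly $\pi_h(q) = q \lhd^{\epsilon_1} h_{i_1} \lhd^{\epsilon_2} \ldots \lhd^{\epsilon_m} h_{i_m}$, as claimed.

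For part (2), I would invoke the second bullet point: $h : \Q_n \to \Q_n$ is a categorical inner automorphism if and only if $h = [t]^{\Q_n, \id_1, \ldots, \id_n}$ for some $[t] \in G_{\T_\Quandle}(\Q_n)$, where $\id_j = [\y_j]$. Specializing part (1) to the identity morphism (so that $h_{i_j} = [\y_{i_j}]$) immediately gives $h([s]) = [s \lhd^{\epsilon_1} \y_{i_1} \lhd^{\epsilon_2} \ldots \lhd^{\epsilon_m} \y_{i_m}]$ for a reduced word. For part (3), the implication that \textbf{algebraic} inner implies \textbf{categorical} inner was already recorded for arbitrary quandles in Section 2, and hence holds for $\Q_n$; for the converse I would simply read off part (2), since the displayed formula for $h$ is precisely the defining form of an algebraic inner automorphism of $\Q_n$ with the chosen elements $q_j := [\y_{i_j}] \in \Q_n$.

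The argument is essentially bookkeeping once Theorem \ref{quandleisotropy} is available, so I do not expect a serious obstacle; the one point demanding care is the \emph{uniqueness} of the reduced word in parts (1) and (2). Here I must upgrade the uniqueness of $[t]$ (furnished by the bullet points) to uniqueness of the reduced word, which requires that distinct reduced words represent distinct elements of $G_{\T_\Quandle}(\Q_n)$. This is exactly the injectivity established inside Corollary \ref{quandleisotropycor}, which in turn rests on the standard fact that reduced group words congruent modulo $\sim_\Grp$ must be identical; I would cite that chain rather than reproving it.
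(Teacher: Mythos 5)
Your proposal is correct and matches the paper's intended argument exactly: the paper itself gives no written proof, stating only that the corollary follows from the two bullet points in the background section, Theorem \ref{quandleisotropy}, and the fact that congruent reduced group words are identical, which is precisely the chain of reasoning (including the uniqueness upgrade via injectivity from Corollary \ref{quandleisotropycor}) that you spell out. No gaps; your write-up simply makes explicit the bookkeeping the paper leaves to the reader.
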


\noindent Finally, we can deduce a characterization of the \emph{global isotropy group} of the category $\Quandle$ of quandles and their homomorphisms, i.e. the group $\Aut\left(\Id_\Quandle\right)$ of automorphisms of the identity functor $\Id_\Quandle : \Quandle \to \Quandle$ (which is also the group of invertible elements of the \emph{centre} of the category $\Quandle$, which is the monoid $\End\left(\Id_\Quandle\right)$ of natural \emph{endo}morphisms of the identity functor). Since the category $\Quandle$ has an initial object, namely the absolutely free quandle $\Q_0$ (whose carrier is just the empty set), it is easy to see that the global isotropy group of $\Quandle$ is exactly the (covariant) categorical isotropy group of the initial object $\Q_0$, i.e.
\[ \Aut\left(\Id_\Quandle\right) = \mathcal{Z}_{\T_\Quandle}(\Q_0). \] Since
\[ \mathcal{Z}_{\T_\Quandle}(\Q_0) \cong G_{\T_\Quandle}(\Q_0) \cong \F_0 \] by Corollary \ref{quandleisotropycor} and $\F_0$ is the trivial group (being the free group on $0$ generators), we thus obtain:

\begin{corollary}
\label{globalquandlecor}
The global isotropy group of the category $\Quandle$ is the trivial group, i.e. the only automorphism of the identity functor $\Id_\Quandle$ is the identity natural transformation. \qed 
\end{corollary}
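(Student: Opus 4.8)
The plan is to exploit the identification, recalled immediately before the statement, of the global isotropy group $\Aut(\Id_\Quandle)$ with the covariant categorical isotropy group $\mathcal{Z}_{\T_\Quandle}(\Q_0)$ of the initial quandle $\Q_0$, and then to read the latter off from Corollary \ref{quandleisotropycor}. The first thing I would do is make that identification precise. Since $\Q_0$ is initial in $\Quandle$, every quandle $Q$ admits a \emph{unique} morphism $\Q_0 \to Q$; consequently the objects of the under-category $\Q_0/\Quandle$ correspond bijectively to the objects of $\Quandle$, and a morphism $h : A \to B$ automatically commutes with the (unique) structure maps out of $\Q_0$, so the morphisms also correspond bijectively. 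Hence the projection functor $\Q_0/\Quandle \to \Quandle$ is an isomorphism of categories, and under it a natural automorphism of this projection functor is literally the same datum as a natural automorphism of $\Id_\Quandle$. This yields the equality of groups $\Aut(\Id_\Quandle) = \mathcal{Z}_{\T_\Quandle}(\Q_0)$.

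With that in hand, I would invoke the general isomorphism $\mathcal{Z}_\T(M_n) \cong G_\T(M_n)$ together with Corollary \ref{quandleisotropycor} in the case $n = 0$, giving $\mathcal{Z}_{\T_\Quandle}(\Q_0) \cong G_{\T_\Quandle}(\Q_0) \cong \F_0$. Since $\F_0$ is the free group on the empty set, it is the trivial group, so $\mathcal{Z}_{\T_\Quandle}(\Q_0)$ is trivial. Tracing this back through the identification of the first step, the unique element corresponds to the identity natural transformation of $\Id_\Quandle$, so $\Aut(\Id_\Quandle)$ is trivial, exactly as claimed.

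I expect the only non-routine point to be the categorical bookkeeping of the first step, i.e.\ verifying carefully that an initial object makes the under-category projection an isomorphism onto the whole category, so that automorphisms of $\Id_\Quandle$ really are computed by the isotropy group at $\Q_0$; everything afterwards is a direct appeal to Corollary \ref{quandleisotropycor} and the triviality of $\F_0$. Since there is no genuine computational obstacle here, I would keep the initial-object argument as the main line rather than attempting a from-scratch naturality computation. (If desired, one could instead unwind the isomorphism $\mathcal{Z}_{\T_\Quandle}(\Q_0) \cong G_{\T_\Quandle}(\Q_0)$ explicitly at $n = 0$ to witness concretely that the only coherent family indexed by morphisms out of $\Q_0$ is the all-identities family, but this merely re-proves the triviality already supplied by Corollary \ref{quandleisotropycor}.)
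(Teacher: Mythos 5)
Your proposal is correct and follows exactly the paper's own route: identify $\Aut\left(\Id_\Quandle\right)$ with $\mathcal{Z}_{\T_\Quandle}(\Q_0)$ via the initial object $\Q_0$, then apply Corollary \ref{quandleisotropycor} at $n = 0$ and the triviality of $\F_0$. The only difference is that you spell out the ``easy to see'' step (the projection $\Q_0/\Quandle \to \Quandle$ being an isomorphism of categories), which the paper leaves implicit.
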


\noindent We also note in connection with Corollary \ref{globalquandlecor} that M. Szymik independently proved in \cite[Theorem 5.5]{Szymik} that the center $\End\left(\Id_\Quandle\right)$ of the category $\Quandle$ is trivial as well. Thus, we obtain the following further corollary:

\begin{corollary}
The global isotropy group of the category $\Quandle$ is equal to its center, and both are trivial. \qed
\end{corollary}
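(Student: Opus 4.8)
The plan is simply to combine the two facts just recalled. By Corollary \ref{globalquandlecor}, the global isotropy group $\Aut(\Id_\Quandle)$ of automorphisms of the identity functor is trivial. By Szymik's \cite[Theorem 5.5]{Szymik}, the center $\End(\Id_\Quandle)$, i.e. the monoid of natural \emph{endo}morphisms of the identity functor, is also trivial.

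To conclude that the two coincide, I would recall (as noted in the discussion preceding Corollary \ref{globalquandlecor}) that the global isotropy group $\Aut(\Id_\Quandle)$ is precisely the group of invertible elements of the center $\End(\Id_\Quandle)$. Since the center is trivial, it contains only the identity natural transformation, which is (trivially) invertible; hence every element of the center is invertible, so that $\Aut(\Id_\Quandle) = \End(\Id_\Quandle)$. Therefore the global isotropy group is equal to the center, and both are the trivial group (equivalently, the trivial monoid).

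There is no substantive obstacle here, as both of the key inputs are supplied externally — one by the preceding Corollary \ref{globalquandlecor} (which itself rests on Corollary \ref{quandleisotropycor} together with the fact that $\F_0$ is the trivial group), and the other by Szymik. The only point requiring a moment's care is the articulation of why the equality holds: it follows from the general observation that a trivial monoid has no non-invertible elements, so that passing from the monoid $\End(\Id_\Quandle)$ of natural endomorphisms to its submonoid $\Aut(\Id_\Quandle)$ of invertible natural automorphisms changes nothing.
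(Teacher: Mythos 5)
Your proposal is correct and matches the paper's own reasoning exactly: the corollary is obtained by combining Corollary \ref{globalquandlecor} with Szymik's theorem that $\End\left(\Id_\Quandle\right)$ is trivial, the equality $\Aut\left(\Id_\Quandle\right) = \End\left(\Id_\Quandle\right)$ following because the former is the group of invertible elements of the latter, which is trivial. Your added remark spelling out why a trivial monoid forces this equality is a fine (if nearly tautological) clarification of what the paper leaves implicit.
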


\section{Isotropy Groups of Free Racks}

In this section, we will proceed to characterize the isotropy groups of free, finitely generated racks, which is a slightly more involved task than the characterization for quandles (due to the increased complexity of the word problem for free racks). Given a (finite) set $X$, it has been shown (cf. \cite[Proposition 4.2]{SDworld}) that the free rack on $X$ has the following presentation, which we denote by $\HalfConj(X, \F_X)$, where $\F_X$ is once again the free group on $X$. The underlying set of $\HalfConj(X, \F_X)$ is the set $X \times \F_X$, and the rack operations on this set are defined as follows, for any $\xbold, \y \in X$ and $[s], [t] \in \F_X$:

\[ (\xbold, [s]) \lhd (\y, [t]) := (\xbold, [s \cdot t^{-1} \cdot \y \cdot t]), \]

\[ (\xbold, [s]) \lhd^{-1} (\y, [t]) := (\xbold, [s \cdot t^{-1} \cdot \y^{-1} \cdot t]). \]

\noindent There is now a function \[ \E_X : \Term^c(\Sigma(X)) \to X \times \Term^c(\Sigma_\Grp(X)) \] with \[ \E_X(\xbold) := (\xbold, e) \tag{$\xbold \in X$} \] and
\[ \E_X(s \lhd^\epsilon t) := \left(\pi_1(\E_X(s)), \pi_2(\E_X(s)) \cdot \pi_2(\E_X(t))^{-1} \cdot \pi_1(\E_X(t))^\epsilon \cdot \pi_2(\E_X(t))\right) \] for $\epsilon = \pm 1$ and $s, t \in \Term^c(\Sigma(X))$.
As before, we will omit the subscript on $\E$ to increase readability.

First, we have the following definition and lemma concerning the relationship between $\E$ and the first projection function $\pi_1 : \HalfConj(X, \F_X) \to X$.

\begin{definition}
{\em Let $t \in \Term^c(\Sigma(X))$ for a (finite) set $X$. We define $\Left(t) \in X$ (intuitively, the `leftmost' element of $X$ occurring in $t$) by induction on $t$:

\begin{itemize}

\item If $t \equiv \xbold$ for some $\xbold \in X$, then $\Left(t) := \xbold$. 

\item If $t \equiv t_1 \lhd^{\epsilon} t_2$ for $t_1, t_2 \in \Term^c(\Sigma(X))$ and $\epsilon = \pm 1$, then $\Left(t) := \Left(t_1)$.
\end{itemize} \qed
}
\end{definition}

\noindent To increase readability, we will now write $\pi_i(\xbold, s)$ as $(\xbold, s)_i$ for $i \in \{1, 2\}$ and $(x, s) \in X \times \Term^c(\Sigma_\Grp(X))$.  

\begin{lemma}
\label{leftlemma}
Let $t \in \Term^c(\Sigma(X))$ for a (finite) set $X$. Then 
\[ \E(t)_1 = \Left(t) \in X. \]
\end{lemma}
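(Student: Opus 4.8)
The plan is to prove this by a routine structural induction on the closed term $t \in \Term^c(\Sigma(X))$. The key observation that makes the induction go through immediately is that \emph{both} the first projection of $\E$ and the function $\Left$ are defined by recursions that descend only into the \emph{left} argument of a term $t_1 \lhd^\epsilon t_2$, completely ignoring the right argument $t_2$ and the exponent $\epsilon$. Consequently, the two quantities track each other clause for clause, and no interaction with the group-word component $\pi_2(\E(-))$ ever arises.

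For the base case I would take $t \equiv \xbold$ for some $\xbold \in X$. By the definition of $\E$ we have $\E(\xbold) = (\xbold, e)$, so $\E(t)_1 = \xbold$; and by the definition of $\Left$ we have $\Left(t) = \xbold$ as well. Hence $\E(t)_1 = \Left(t)$ in this case.

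For the inductive step I would suppose $t \equiv t_1 \lhd^\epsilon t_2$ with $\E(t_1)_1 = \Left(t_1)$ holding by the induction hypothesis. The defining clause of $\E$ gives that the first coordinate of $\E(t_1 \lhd^\epsilon t_2)$ is exactly $\pi_1(\E(t_1)) = \E(t_1)_1$, so that $\E(t)_1 = \E(t_1)_1 = \Left(t_1)$ by the hypothesis, and finally $\Left(t_1) = \Left(t_1 \lhd^\epsilon t_2) = \Left(t)$ by the defining clause of $\Left$. Chaining these equalities yields $\E(t)_1 = \Left(t)$, which completes the induction.

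There is essentially no genuine obstacle here: the only point requiring any attention is confirming that, in the defining clause for $\E(s \lhd^\epsilon t)$, the first component is precisely $\pi_1(\E(s))$ and involves neither $t$ nor $\epsilon$, which is indeed the case and is exactly what makes it agree with the clause $\Left(t_1 \lhd^\epsilon t_2) := \Left(t_1)$. So the only thing to do, such as it is, is to line up the two recursive definitions.
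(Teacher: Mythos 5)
Your proof is correct and is essentially identical to the paper's: both argue by structural induction, using the base case $\E(\xbold) = (\xbold, e)$ and the fact that the defining clauses of $\E(-)_1$ and $\Left$ both recurse only into the left argument of $t_1 \lhd^\epsilon t_2$.
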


\begin{proof}
We prove this by induction on $t$. 
\begin{itemize}
\item If $t \equiv \xbold$ for some $\xbold \in X$, then we have
\[ \E(t)_1 = \E(\xbold)_1 = (\xbold, e)_1 = \xbold = \Left(\xbold) = \Left(t), \]
as desired.

\item Let $t \equiv t_1 \lhd^{\epsilon} t_2$ for some $t_1, t_2 \in \Term^c(\Sigma(X))$ and $\epsilon = \pm 1$ such that $\E(t_1)_1 = \Left(t_1)$. Then by definition of $\E$ we have
\[ \E(t_1 \lhd^{\epsilon} t_2)_1 = \E(t_1)_1 = \Left(t_1) = \Left(t_1 \lhd^\epsilon t_2), \]
as desired.  
\end{itemize}
\end{proof}

\noindent We also have the following substitution lemma:

\begin{lemma}
\label{racksubstlemma}
Let $X$ be an arbitrary (finite) set with designated element $\xbold \in X$, and let $X'$ be another (finite) set with $X \subseteq X'$ and $\xbold_0, \xbold_1 \in X' \setminus X$. Then for any $t \in \Term^c(\Sigma(X))$ we have:
\begin{itemize}

\item If $\E(t)_1 = \xbold$, then \[ \E(t[\xbold_0 \lhd^\epsilon \xbold_1/\xbold])_1 = \xbold_0 \] for $\epsilon = \pm 1$ and
\[ \E(t[\xbold_0 \lhd \xbold_1/\xbold])_2 \sim \xbold_1 \cdot \E(t)_2[\xbold_1^{-1}\xbold_0\xbold_1/\xbold], \]
\[ \E(t[\xbold_0 \lhd^{-1} \xbold_1/\xbold])_2 \sim \xbold_1^{-1} \cdot \E(t)_2[\xbold_1\xbold_0\xbold_1^{-1}/\xbold]. \]

\item If $\E(t)_1 \neq \xbold$, then \[ \E(t[\xbold_0 \lhd^\epsilon \xbold_1/\xbold])_1 = \E(t)_1 \] for $\epsilon = \pm 1$ and
\[ \E(t[\xbold_0 \lhd \xbold_1/\xbold])_2 \sim \E(t)_2[\xbold_1^{-1}\xbold_0\xbold_1/\xbold], \]
\[ \E(t[\xbold_0 \lhd^{-1} \xbold_1/\xbold])_2 \sim \E(t)_2[\xbold_1\xbold_0\xbold_1^{-1}/\xbold]. \]
\end{itemize}
\end{lemma}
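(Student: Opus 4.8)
The plan is to prove both bullet points simultaneously by induction on the structure of the closed term $t \in \Term^c(\Sigma(X))$, holding $\xbold_0, \xbold_1$ and the exponent $\epsilon$ fixed. Write $s := \xbold_0 \lhd^\epsilon \xbold_1$ for the substituted term; a one-line computation from the definition of $\E$ gives $\E(s) = (\xbold_0, \xbold_1^\epsilon)$, i.e. $\E(s)_1 = \xbold_0$ and $\E(s)_2 = \xbold_1^\epsilon$. It is convenient to treat the two displayed second-component identities in each bullet uniformly by setting $u_\epsilon := \xbold_1^{-\epsilon} \xbold_0 \xbold_1^\epsilon$, so that the substitution word on the right-hand sides is $u_\epsilon$ and the leading factor (in the first bullet) is $\xbold_1^\epsilon$; the stated cases $\epsilon = \pm 1$ are recovered by specialisation. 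Throughout I will use Lemma \ref{leftlemma} in the form $\E(-)_1 = \Left(-)$, together with the fact recorded in the proof of the first substitution lemma (Lemma 1) that group-word inversion commutes with substitution up to $\sim$, i.e. $u^{-1}[v/\xbold] \sim (u[v/\xbold])^{-1}$.

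The first-component assertions are the routine part. Since $\Left(t_1 \lhd^\delta t_2) = \Left(t_1)$ and substitution distributes over the operations, one checks by a short induction that $\Left(t[s/\xbold])$ equals $\xbold_0$ when $\Left(t) = \xbold$ and equals $\Left(t)$ otherwise; translating through $\E(-)_1 = \Left(-)$ gives exactly the two first-component claims. I would fold this induction into the main one, because the second-component argument for a composite term needs precisely this information about the \emph{right} subterm.

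For the second components, the base cases $t \equiv \xbold$ and $t \equiv \y$ (with $\y \neq \xbold$) are immediate from $\E(s) = (\xbold_0, \xbold_1^\epsilon)$ and $\E(\y)_2 = e$. For the inductive step $t \equiv t_1 \lhd^\delta t_2$ (with $\delta = \pm 1$ the top operation, not to be confused with $\epsilon$), the defining clause for $\E$ together with $t[s/\xbold] = t_1[s/\xbold] \lhd^\delta t_2[s/\xbold]$ yields
\[ \E(t[s/\xbold])_2 = \E(t_1[s/\xbold])_2 \cdot \E(t_2[s/\xbold])_2^{-1} \cdot \E(t_2[s/\xbold])_1^\delta \cdot \E(t_2[s/\xbold])_2. \]
Into this I substitute the induction hypotheses for $t_1$ and $t_2$. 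A four-way case analysis appears, according to whether each of $\Left(t_1)$ and $\Left(t_2)$ equals $\xbold$: the value of $\Left(t_1)$ decides which bullet is being proved for $t$ (as $\E(t)_1 = \Left(t_1)$, so the leading $\xbold_1^\epsilon$ is present exactly when $\Left(t_1) = \xbold$), while the value of $\Left(t_2)$ controls the middle factor $\E(t_2[s/\xbold])_1^\delta$.

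The one computation that makes the lemma work — and the step I expect to be the main obstacle — is the sub-case $\Left(t_2) = \xbold$. There the induction hypothesis for $t_2$ contributes both a conjugating prefix $\xbold_1^\epsilon$ on $\E(t_2[s/\xbold])_2$ and the first-component value $\E(t_2[s/\xbold])_1 = \xbold_0$. Feeding these into the displayed product, the prefix from $\E(t_2[s/\xbold])_2$ and its inverse sandwich $\xbold_0^\delta$ into the factor $\xbold_1^{-\epsilon} \xbold_0^\delta \xbold_1^\epsilon$; the identity $\xbold_1^{-\epsilon} \xbold_0^\delta \xbold_1^\epsilon = (\xbold_1^{-\epsilon} \xbold_0 \xbold_1^\epsilon)^\delta = u_\epsilon^\delta$ is then exactly what identifies this with $(\E(t_2)_1^\delta)[u_\epsilon/\xbold]$, i.e. with the image under $[u_\epsilon/\xbold]$ of the corresponding factor of $\E(t)_2$. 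Combining this with the hypothesis on $\E(t_1[s/\xbold])_2$ and using $u^{-1}[v/\xbold] \sim (u[v/\xbold])^{-1}$ to line up the inverses, each of the four sub-cases collapses to the asserted formula $\E(t)_2[u_\epsilon/\xbold]$ (prefixed by $\xbold_1^\epsilon$ precisely when $\Left(t_1) = \xbold$), closing the induction. When $\Left(t_2) \neq \xbold$ the same computation is simpler, since $\E(t_2[s/\xbold])_1 = \Left(t_2) \neq \xbold$ passes through the substitution $[u_\epsilon/\xbold]$ unchanged.
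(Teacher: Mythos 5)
Your proposal is correct and follows essentially the same route as the paper's proof: structural induction on $t$, with the first-component claims tracked via $\Left$ and Lemma \ref{leftlemma}, the inductive step expanded through the defining clause of $\E$, and the same four-way case split on whether $\Left(t_1)$ and $\Left(t_2)$ equal $\xbold$, culminating in the same sandwich computation $\xbold_1^{-\epsilon}\xbold_0^\delta\xbold_1^\epsilon = u_\epsilon^\delta$. The only difference is cosmetic: you handle the exponents $\epsilon$ and $\delta$ uniformly via the notation $u_\epsilon$, where the paper fixes $\epsilon = \delta = 1$ and declares the remaining combinations analogous.
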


\begin{proof}
We prove this by induction on $t \in \Term^c(\Sigma(X))$. We will only consider the claims for $\lhd$, since the claims for $\lhd^{-1}$ have analogous proofs. 
\begin{itemize}
\item If $t \equiv \xbold$, then $\E(t)_1 = (\xbold, e)_1 = \xbold$, so that we must prove  
\[ \E(\xbold_0 \lhd \xbold_1)_1 = \xbold_0 \] and
\[ \E(\xbold_0 \lhd \xbold_1)_2 \sim \xbold_1 \cdot e[\xbold_1^{-1}\xbold_0\xbold_1/\xbold] \equiv \xbold_1 \cdot e. \] 
Since $\E(\xbold_0) = (\xbold_0, e)$ and $\E(\xbold_1) = (\xbold_1, e)$, we have by definition of $\E$
\[ \E(\xbold_0 \lhd \xbold_1) = (\xbold_0, e \cdot e^{-1} \cdot \xbold_1 \cdot e), \]
which clearly yields the desired result. 

\item If $t \equiv \y$ for some $\y \in X$ with $\y \neq \xbold$, then $\E(t)_1 = (\y, e)_1 = \y$ and hence we must show
\[ \E(\y)_1 = \y \] and 
\[ \E(\y)_2 \sim e[\xbold_1^{-1}\xbold_0\xbold_1/\xbold]) \equiv e, \]
which clearly follows by definition of $\E(\y)$.

\item For the induction step, suppose that $t \equiv t_1 \lhd t_2$ for some $t_1, t_2 \in \Term^c(\Sigma(X))$ for which the result holds.
\begin{itemize}
\item Suppose first that $\E(t)_1 = \xbold$. Then by Lemma \ref{leftlemma} it follows that \[ \xbold = \Left(t) = \Left(t_1) = \E(t_1)_1. \]
Then by the induction hypothesis for $t_1$ we have
\[ \E(t_1[\xbold_0 \lhd \xbold_1/\xbold])_1 = \xbold_0 \] and
\[ \E(t_1[\xbold_0 \lhd \xbold_1/\xbold])_2 \sim \xbold_1 \cdot \E(t_1)_2[\xbold_1^{-1}\xbold_0\xbold_1/\xbold], \] so that by definition of $\E$ we have
\[ \E(t[\xbold_0 \lhd \xbold_1/\xbold])_1 = \E(t_1[\xbold_0 \lhd \xbold_1/\xbold])_1 = \xbold_0, \] as required. 
To compute $\E(t[\xbold_0 \lhd \xbold_1/\xbold])_2$, suppose in addition that $\E(t_2)_1 = \xbold$, so that the induction hypothesis for $t_2$ gives
\[ \E(t_2[\xbold_0 \lhd \xbold_1/\xbold])_1 = \xbold_0 \] and
\[ \E(t_2[\xbold_0 \lhd \xbold_1/\xbold])_2 \sim \xbold_1 \cdot \E(t_2)_2[\xbold_1^{-1}\xbold_0\xbold_1/\xbold]. \]
Using the definition of $\E$, the induction hypotheses, and the current assumption that $\E(t_2)_1 = \xbold$, we then have:
\begin{align*}
&\quad \ \E(t[\xbold_0 \lhd \xbold_1/\xbold])_2 \\	
&= \E(t_1[\xbold_0 \lhd \xbold_1/\xbold] \lhd t_2[\xbold_0 \lhd \xbold_1/\xbold])_2 \\
&\sim \xbold_1 \cdot \E(t_1)_2[\xbold_1^{-1}\xbold_0\xbold_1/\xbold] \cdot \left(\E(t_2)_2[\xbold_1^{-1}\xbold_0\xbold_1/\xbold]\right)^{-1} \cdot \xbold_1^{-1} \cdot \xbold_0 \cdot \xbold_1 \cdot\E(t_2)_2[\xbold_1^{-1}\xbold_0\xbold_1/\xbold] \\
&\sim \xbold_1 \cdot \E(t_1)_2[\xbold_1^{-1}\xbold_0\xbold_1/\xbold] \cdot \E(t_2)_2^{-1}[\xbold_1^{-1}\xbold_0\xbold_1/\xbold] \cdot \xbold_1^{-1} \cdot \xbold_0 \cdot \xbold_1 \cdot \E(t_2)_2[\xbold_1^{-1}\xbold_0\xbold_1/\xbold] \\
&\equiv \xbold_1 \cdot \left(\E(t_1)_2 \cdot \E(t_2)_2^{-1} \cdot \xbold \cdot \E(t_2)_2\right)[\xbold_1^{-1}\xbold_0\xbold_1/\xbold] \\
&= \xbold_1 \cdot \left(\E(t_1)_2 \cdot \E(t_2)_2^{-1} \cdot \E(t_2)_1 \cdot \E(t_2)_2\right)[\xbold_1^{-1}\xbold_0\xbold_1/\xbold] \\
&= \xbold_1 \cdot \E(t_1 \lhd t_2)_2[\xbold_1^{-1}\xbold_0\xbold_1/\xbold] \\
&= \xbold_1 \cdot \E(t)_2[\xbold_1^{-1}\xbold_0\xbold_1/\xbold],
\end{align*}
as desired. 

Now suppose that $\E(t_2)_1 = \y$ for some $\y \in X$ with $\y \neq \xbold$. Then by the induction hypothesis for $t_2$, it follows that \[ \E(t_2[\xbold_0 \lhd \xbold_1/\xbold])_1 = \y \] and 
\[ \E(t_2[\xbold_0 \lhd \xbold_1/\xbold])_2 \sim \E(t_2)_2[\xbold_1^{-1}\xbold_0\xbold_1/\xbold]. \]
Then we calculate as follows:
\begin{align*}
&\quad \ \E(t[\xbold_0 \lhd \xbold_1/\xbold])_2 \\	
&= \E(t_1[\xbold_0 \lhd \xbold_1/\xbold] \lhd t_2[\xbold_0 \lhd \xbold_1/\xbold])_2 \\
&\sim \xbold_1 \cdot \E(t_1)_2[\xbold_1^{-1}\xbold_0\xbold_1/\xbold] \cdot \left(\E(t_2)_2[\xbold_1^{-1}\xbold_0\xbold_1/\xbold]\right)^{-1} \cdot \y \cdot \E(t_2)_2[\xbold_1^{-1}\xbold_0\xbold_1/\xbold] \\
&\sim \xbold_1 \cdot \E(t_1)_2[\xbold_1^{-1}\xbold_0\xbold_1/\xbold] \cdot \E(t_2)_2^{-1}[\xbold_1^{-1}\xbold_0\xbold_1/\xbold] \cdot \y \cdot \E(t_2)_2[\xbold_1^{-1}\xbold_0\xbold_1/\xbold] \\
&\equiv \xbold_1 \cdot \left(\E(t_1)_2 \cdot \E(t_2)_2^{-1} \cdot \y \cdot \E(t_2)_2\right)[\xbold_1^{-1}\xbold_0\xbold_1/\xbold] \\
&= \xbold_1 \cdot \left(\E(t_1)_2 \cdot \E(t_2)_2^{-1} \cdot \E(t_2)_1 \cdot \E(t_2)_2\right)[\xbold_1^{-1}\xbold_0\xbold_1/\xbold] \\
&= \xbold_1 \cdot \E(t_1 \lhd t_2)_2[\xbold_1^{-1}\xbold_0\xbold_1/\xbold] \\
&= \xbold_1 \cdot \E(t)_2[\xbold_1^{-1}\xbold_0\xbold_1/\xbold],
\end{align*}
as desired. This completes the proof for the case where $\E(t)_1 = \xbold$.

\item Now suppose that $\E(t)_1 = \y$ for some $\y \neq \xbold$. As before, this implies that $\E(t_1)_1 = \y$ as well. Then by the induction hypothesis for $t_1$, we have
\[ \E(t_1[\xbold_0 \lhd \xbold_1/\xbold])_1 = \y \] and
\[ \E(t_1[\xbold_0 \lhd \xbold_1/\xbold])_2 \sim \E(t_1)_2[\xbold_1^{-1}\xbold_0\xbold_1/\xbold], \] which implies as before that \[ \E(t[\xbold_0 \lhd \xbold_1/\xbold])_1 = \y. \] 
To compute $\E(t[\xbold_0 \lhd \xbold_1/\xbold])_2$, suppose first that $\E(t_2)_1 = \xbold$. Then by the induction hypothesis for $t_2$, we have 
\[ \E(t_2[\xbold_0 \lhd \xbold_1/\xbold])_1 = \xbold_0 \] and
\[ \E(t_2[\xbold_0 \lhd \xbold_1/\xbold])_2 \sim \xbold_1 \cdot\E(t_2)_2[\xbold_1^{-1}\xbold_0\xbold_1/\xbold]. \]
Then we calculate as follows:
\begin{align*}
&\quad \ \E(t[\xbold_0 \lhd \xbold_1/\xbold])_2 \\	
&= \E(t_1[\xbold_0 \lhd \xbold_1/\xbold] \lhd t_2[\xbold_0 \lhd \xbold_1/\xbold])_2 \\
&\sim \E(t_1)_2[\xbold_1^{-1}\xbold_0\xbold_1/\xbold] \cdot \left(\E(t_2)_2[\xbold_1^{-1}\xbold_0\xbold_1/\xbold]\right)^{-1} \cdot \xbold_1^{-1} \cdot \xbold_0 \cdot \xbold_1 \cdot\E(t_2)_2[\xbold_1^{-1}\xbold_0\xbold_1/\xbold] \\
&\sim \E(t_1)_2[\xbold_1^{-1}\xbold_0\xbold_1/\xbold] \cdot \E(t_2)_2^{-1}[\xbold_1^{-1}\xbold_0\xbold_1/\xbold] \cdot \xbold_1^{-1} \cdot \xbold_0 \cdot \xbold_1 \cdot \E(t_2)_2[\xbold_1^{-1}\xbold_0\xbold_1/\xbold] \\
&\equiv \left(\E(t_1)_2 \cdot \E(t_2)_2^{-1} \cdot \xbold \cdot \E(t_2)_2\right)[\xbold_1^{-1}\xbold_0\xbold_1/\xbold] \\
&= \left(\E(t_1)_2 \cdot \E(t_2)_2^{-1} \cdot \E(t_2)_1 \cdot \E(t_2)_2\right)[\xbold_1^{-1}\xbold_0\xbold_1/\xbold] \\
&= \E(t_1 \lhd t_2)_2[\xbold_1^{-1}\xbold_0\xbold_1/\xbold] \\
&= \E(t)_2[\xbold_1^{-1}\xbold_0\xbold_1/\xbold],
\end{align*}
as desired. 

Finally, suppose that $\E(t_2)_1 = \z$ for some $\z \in X$ with $\z \neq \xbold$. Then by the induction hypothesis for $t_2$, we have
\[ \E(t_2[\xbold_0 \lhd \xbold_1/\xbold])_1 = \z \] and
\[ \E(t_2[\xbold_0 \lhd \xbold_1/\xbold])_2 \sim \E(t_2)_2[\xbold_1^{-1}\xbold_0\xbold_1/\xbold]). \]
Then we calculate as follows:
\begin{align*}
&\quad \ \E(t[\xbold_0 \lhd \xbold_1/\xbold])_2 \\	
&= \E(t_1[\xbold_0 \lhd \xbold_1/\xbold] \lhd t_2[\xbold_0 \lhd \xbold_1/\xbold])_2 \\
&\sim \E(t_1)_2[\xbold_1^{-1}\xbold_0\xbold_1/\xbold] \cdot \left(\E(t_2)_2[\xbold_1^{-1}\xbold_0\xbold_1/\xbold]\right)^{-1} \cdot \z \cdot \E(t_2)_2[\xbold_1^{-1}\xbold_0\xbold_1/\xbold] \\
&\sim \E(t_1)_2[\xbold_1^{-1}\xbold_0\xbold_1/\xbold] \cdot \E(t_2)_2^{-1}[\xbold_1^{-1}\xbold_0\xbold_1/\xbold] \cdot \z \cdot \E(t_2)_2[\xbold_1^{-1}\xbold_0\xbold_1/\xbold] \\
&\equiv \left(\E(t_1)_2 \cdot \E(t_2)_2^{-1} \cdot \z \cdot \E(t_2)_2\right)[\xbold_1^{-1}\xbold_0\xbold_1/\xbold] \\
&= \left(\E(t_1)_2 \cdot \E(t_2)_2^{-1} \cdot \E(t_2)_1 \cdot \E(t_2)_2\right)[\xbold_1^{-1}\xbold_0\xbold_1/\xbold] \\
&= \E(t_1 \lhd t_2)_2[\xbold_1^{-1}\xbold_0\xbold_1/\xbold] \\
&= \E(t)_2[\xbold_1^{-1}\xbold_0\xbold_1/\xbold],
\end{align*}
as desired. 
\end{itemize}
This completes the proof for the case $t \equiv t_1 \lhd t_2$, which completes the induction and hence the proof.
\end{itemize}
\end{proof}

\noindent We now require the following lemma, definition, and lemma.

\begin{lemma}
\label{canonicalformlemma}
Let $t \in \Term^c(\Sigma(X))$ for a (finite) set $X$, and assume that $t$ has the form
\[ t \equiv \z_0 \lhd^{\epsilon_1} \ldots \lhd^{\epsilon_m} \z_m, \]
with $\epsilon_j = \pm 1$ and $\z_k \in X$ for all $1 \leq j \leq m$ and $0 \leq k \leq m$. Then \[ \E(t)_1 = \z_0 \] and 
\[ \E(t)_2 \sim \z_1^{\epsilon_1} \cdot \ldots \cdot \z_m^{\epsilon_m}. \]
\end{lemma}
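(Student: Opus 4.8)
The plan is to prove both claims together by induction on $m \geq 0$, the number of operation symbols in the left-associated term $t \equiv \z_0 \lhd^{\epsilon_1} \ldots \lhd^{\epsilon_m} \z_m$. Because bracketing associates to the left, the outermost operation of $t$ is $\lhd^{\epsilon_m}$ applied to the proper subterm $t' \equiv \z_0 \lhd^{\epsilon_1} \ldots \lhd^{\epsilon_{m-1}} \z_{m-1}$ (which has the same shape but length $m-1$) and to the bare constant $\z_m$. This is exactly the recursion along which $\E$ is defined, so induction on $m$ matches the definition step for step.

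For the first claim, I would observe that it is essentially immediate from Lemma \ref{leftlemma}: since $\E(t)_1 = \Left(t)$ and, by the definition of $\Left$, we have $\Left(\z_0 \lhd^{\epsilon_1} \ldots \lhd^{\epsilon_m} \z_m) = \z_0$, it follows at once that $\E(t)_1 = \z_0$. Alternatively, this also drops out of the induction below, since the first projection of $\E(t' \lhd^{\epsilon_m} \z_m)$ is by definition just $\E(t')_1$.

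For the second claim, the base case $m = 0$ is $t \equiv \z_0$, where $\E(\z_0) = (\z_0, e)$ gives $\E(t)_2 = e$, the empty product, as required. For the inductive step I would write $t \equiv t' \lhd^{\epsilon_m} \z_m$ and apply the inductive hypothesis to $t'$, yielding $\E(t')_2 \sim \z_1^{\epsilon_1} \cdots \z_{m-1}^{\epsilon_{m-1}}$. The key simplifying observation is that $\z_m$ is a single constant, so $\E(\z_m) = (\z_m, e)$, whence $\pi_2(\E(\z_m)) = e$ and $\pi_1(\E(\z_m)) = \z_m$. Substituting into the defining clause $\E(t' \lhd^{\epsilon_m} \z_m)_2 = \pi_2(\E(t')) \cdot \pi_2(\E(\z_m))^{-1} \cdot \pi_1(\E(\z_m))^{\epsilon_m} \cdot \pi_2(\E(\z_m))$, the two conjugating factors $\pi_2(\E(\z_m))^{\pm 1}$ collapse to $e$, leaving $\E(t)_2 \sim \E(t')_2 \cdot \z_m^{\epsilon_m} \sim \z_1^{\epsilon_1} \cdots \z_m^{\epsilon_m}$.

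There is no real obstacle here: the entire content is that, because the right-hand argument at each left-associated step is a bare generator with trivial second component, the conjugation built into the definition of $\E$ degenerates into plain right-concatenation. The only points requiring any care are respecting the empty-product convention in the base case and making sure the inductive hypothesis is invoked for the correct subterm $t'$ of length $m-1$.
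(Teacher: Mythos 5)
Your proof is correct and follows essentially the same route as the paper's: induction on $m$, peeling off the last factor $\z_m$ (the paper calls your $t'$ by the name $s$), and using $\E(\z_m) = (\z_m, e)$ so that the conjugating factors in the defining clause of $\E$ collapse, leaving plain right-concatenation. Your extra remark that the first projection claim also follows directly from Lemma \ref{leftlemma} is a harmless shortcut; the paper instead obtains it inside the same induction, exactly as in your alternative observation.
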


\begin{proof}
We prove this by induction on $m \geq 0$. If $m = 0$, then $t \equiv \z_0$ for some $\z_0 \in X$, and we must show that $\E(\z_0)_1 = \z_0$ and $\E(\z_0)_2 \sim e$, which is true by definition of $\E$. 

Now suppose that the result holds for some $m \geq 0$, and let 
\[ t \equiv \z_0 \lhd^{\epsilon_1} \ldots \lhd^{\epsilon_m} \z_m \lhd^{\epsilon_{m+1}} \z_{m+1}, \]
and let
\[ s := \z_0 \lhd^{\epsilon_1} \ldots \lhd^{\epsilon_{m}} \z_{m}. \]
Then by the induction hypothesis for $s$, we have
\[ \E(s)_1 = \z_0 \] and
\[ \E(s)_2 \sim \z_1^{\epsilon_1} \cdot \ldots \cdot \z_{m}^{\epsilon_{m}}. \]
By definition of $\E$ and the induction hypothesis (and the fact that bracketing associates to the left), we then have
\[ \E(t)_1 = \E(\z_0 \lhd^{\epsilon_1} \ldots \lhd^{\epsilon_m} \z_m \lhd^{\epsilon_{m+1}} \z_{m+1})_1 = \E(s)_1 = \z_0, \]  
as well as (recalling that $\E(\z_{m+1}) = (\z_{m+1}, e)$)
\begin{align*}
\E(t)_2	&= \E(\z_0 \lhd^{\epsilon_1} \ldots \lhd^{\epsilon_m} \z_m \lhd^{\epsilon_{m+1}} \z_{m+1})_2 \\
		&= \E(s \lhd^{\epsilon_{m+1}} \z_{m+1})_2 \\
		&\sim \z_1^{\epsilon_1} \cdot \ldots \cdot \z_{m}^{\epsilon_{m}} \cdot e^{-1} \cdot \z_{m+1}^{\epsilon_{m+1}} \cdot e \\
		&\sim \z_1^{\epsilon_1} \cdot \ldots \cdot \z_{m}^{\epsilon_{m}} \cdot \z_{m+1}^{\epsilon_{m+1}},
\end{align*}
as desired. This completes the induction and hence the proof.
\end{proof}

\begin{definition}
Let $t \in \Term^c(\Sigma(X))$ for a (finite) set $X$. Then we define $\W(t) \in \Term^c(\Sigma_\Grp(X))$ to be
\[ \W(t) := \E(t)_2^{-1} \cdot \E(t)_1 \cdot \E(t)_2. \] \qed
\end{definition}

\begin{lemma}
\label{mainracksubstlemma}
Let $t, t' \in \Term^c(\Sigma(X))$ for a (finite) set $X$, where $\xbold \in X$ is a distinguished element and $\E(t')_1 = \xbold$ and $t$ has the form
\[ t \equiv \xbold \lhd^{\epsilon_1} \z_1 \lhd^{\epsilon_2} \ldots \lhd^{\epsilon_m} \z_m, \]
with $\epsilon_j = \pm 1$ and $\z_j \in X$ for all $1 \leq j \leq m$. Then 
\[ \E(t[t'/\xbold])_1 = \xbold \] and
\[ \E(t[t'/\xbold])_2 \sim \E(t')_2 \cdot \E(t)_2[\W(t')/\xbold]. \]
\end{lemma}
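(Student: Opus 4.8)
The plan is to prove both claims simultaneously by induction on $m$, the number of $\lhd$-operations appearing in $t$. For the base case $m = 0$ we have $t \equiv \xbold$, so that $t[t'/\xbold] \equiv t'$; the first claim is then just the hypothesis $\E(t')_1 = \xbold$, and for the second claim we note that $\E(t)_2 = \E(\xbold)_2 \equiv e$, whence $\E(t)_2[\W(t')/\xbold] \equiv e$ and therefore $\E(t')_2 \cdot \E(t)_2[\W(t')/\xbold] \sim \E(t')_2 = \E(t[t'/\xbold])_2$, as required.

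For the inductive step I would write $t \equiv s \lhd^{\epsilon_{m+1}} \z_{m+1}$, where $s \equiv \xbold \lhd^{\epsilon_1} \z_1 \lhd \ldots \lhd^{\epsilon_m} \z_m$ again has the required form, so that the induction hypothesis applies to $s$ and yields $\E(s[t'/\xbold])_1 = \xbold$ together with $\E(s[t'/\xbold])_2 \sim \E(t')_2 \cdot \E(s)_2[\W(t')/\xbold]$. Since substitution commutes with the term-formers, $t[t'/\xbold] \equiv s[t'/\xbold] \lhd^{\epsilon_{m+1}} \z_{m+1}[t'/\xbold]$, and the first claim follows at once from the identity $\E(a \lhd^\epsilon b)_1 = \E(a)_1$ and the induction hypothesis, giving $\E(t[t'/\xbold])_1 = \E(s[t'/\xbold])_1 = \xbold$. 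For the second claim I would unfold the definition of $\E$ on the outermost $\lhd^{\epsilon_{m+1}}$ and then distinguish two cases according to whether $\z_{m+1} = \xbold$.

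When $\z_{m+1} \neq \xbold$ we have $\z_{m+1}[t'/\xbold] \equiv \z_{m+1}$ and $\E(\z_{m+1}) = (\z_{m+1}, e)$, so the outer application contributes only the right-hand factor $\z_{m+1}^{\epsilon_{m+1}}$; this matches $\E(t)_2[\W(t')/\xbold] = \E(s)_2[\W(t')/\xbold] \cdot \z_{m+1}^{\epsilon_{m+1}}$, using that substitution distributes over group multiplication and that $\z_{m+1}^{\epsilon_{m+1}}[\W(t')/\xbold] \equiv \z_{m+1}^{\epsilon_{m+1}}$ since $\z_{m+1} \neq \xbold$, so the case closes via the induction hypothesis. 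The harder case is $\z_{m+1} = \xbold$: here $\z_{m+1}[t'/\xbold] \equiv t'$, and unfolding $\E$ produces the right-hand factor $\E(t')_2^{-1} \cdot \E(t')_1^{\epsilon_{m+1}} \cdot \E(t')_2$. The crux is to observe that, because $\E(t')_1 = \xbold$, this factor equals $\W(t')^{\epsilon_{m+1}}$ (conjugation commutes with taking the $\epsilon_{m+1}$-power), which in turn equals $(\xbold^{\epsilon_{m+1}})[\W(t')/\xbold]$; combined with $\E(t)_2 = \E(s)_2 \cdot \xbold^{\epsilon_{m+1}}$ and the distributivity of substitution over products, this again matches $\E(t')_2 \cdot \E(t)_2[\W(t')/\xbold]$ after invoking the induction hypothesis.

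Throughout I will use the two elementary facts about group-word substitution already employed in the proof of Lemma 1, namely that $(a \cdot b)[v/\xbold] \equiv a[v/\xbold] \cdot b[v/\xbold]$ and that $u[v/\xbold]^{-1} \sim u^{-1}[v/\xbold]$. The main obstacle is the bookkeeping in the case $\z_{m+1} = \xbold$, specifically the identification of the conjugation factor $\E(t')_2^{-1} \cdot \E(t')_1^{\epsilon_{m+1}} \cdot \E(t')_2$ with $\W(t')^{\epsilon_{m+1}} = (\xbold^{\epsilon_{m+1}})[\W(t')/\xbold]$; everything else reduces to routine unfolding of the definitions of $\E$ and of substitution.
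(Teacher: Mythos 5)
Your proof is correct and takes essentially the same route as the paper's: induction on the length of $t$, the decomposition $t \equiv s \lhd^{\epsilon_{m+1}} \z_{m+1}$, a case split on whether $\z_{m+1} = \xbold$, and the key identification $\E(t')_2^{-1} \cdot \E(t')_1^{\epsilon_{m+1}} \cdot \E(t')_2 \sim \W(t')^{\epsilon_{m+1}} \equiv \xbold^{\epsilon_{m+1}}[\W(t')/\xbold]$ in the hard case. The only cosmetic difference is that you work directly with the recursive definition of $\E$ (so $\E(t)_2 \sim \E(s)_2 \cdot \z_{m+1}^{\epsilon_{m+1}}$), whereas the paper rewrites $\E(s)_2$ and $\E(t)_2$ as explicit group words via Lemma \ref{canonicalformlemma}; both yield the same computation.
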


\begin{proof}
We prove this by induction on the length of $t$. 
\begin{itemize}
\item For the base case, let $t \equiv \xbold$, so that $\E(t)_2 = \E(\xbold)_2 = \pi_2(\xbold, e) = e$. Then
\[ \E(t[t'/\xbold])_1 = \E(\xbold[t'/\xbold])_1 = \E(t')_1 = \xbold \] by hypothesis on $t'$, and we have
\begin{align*}
\E(t[t'/\xbold])_2	&= \E(\xbold[t'/\xbold])_2 \\
			&= \E(t')_2 \\
			&\sim \E(t')_2 \cdot e \\
			&\equiv \E(t')_2 \cdot e[\W(t')/\xbold] \\
			&= \E(t')_2 \cdot \E(t)_2[\W(t')/\xbold],
\end{align*}
as required. \par

\item Now suppose that the result holds for all terms $t$ of the described form of some length $n \geq 1$, and consider 
\[ t \equiv \xbold \lhd^{\epsilon_1} \z_1 \lhd^{\epsilon_2} \ldots \lhd^{\epsilon_{m+1}} z_{m+1}, \]
with $m \geq 0$ and $\epsilon_j = \pm 1$ and $\z_j \in X$ for all $1 \leq j \leq m + 1$. If we set
\[ s := \xbold \lhd^{\epsilon_1} \z_1 \lhd^{\epsilon_2} \ldots \lhd^{\epsilon_{m}} \z_{m}, \]
then by the induction hypothesis we have
\[ \E(s[t'/\xbold])_1 = \xbold \] and
\[ \E(s[t'/\xbold])_2 \sim \E(t')_2 \cdot \E(s)_2[\W(t')/\xbold]. \]
So $t \equiv s \lhd^{\epsilon_{m + 1}} \z_{m + 1}$, and hence we have
\[ \E(t[t'/\xbold])_1 = \E\left(s[t'/\xbold] \lhd^{\epsilon_{m+1}} \z_{m+1}[t'/\xbold]\right)_1 = \E(s[t'/\xbold])_1 = \xbold \] by definition of $\E$ and the induction hypothesis, as well as
\begin{align*}
&\quad \ \E(t[t'/\xbold])_2 \\	
&= \E(s[t'/\xbold] \lhd^{\epsilon_{m+1}} \z_{m+1}[t'/\xbold])_2 \\
&\sim \E(t')_2 \cdot \E(s)_2[\W(t')/\xbold] \cdot \E(z_{m+1}[t'/\xbold])_2^{-1} \cdot \E(\z_{m+1}[t'/\xbold])_1^{\epsilon_{m+1}} \cdot \E(\z_{m+1}[t'/\xbold])_2 \\
&\sim \E(t')_2 \cdot \left(\xbold \cdot \z_1^{\epsilon_1} \cdot \ldots \cdot \z_m^{\epsilon_m} \right)[\W(t')/\xbold] \cdot \E(\z_{m+1}[t'/\xbold])_2^{-1} \cdot \E(\z_{m+1}[t'/\xbold])_1^{\epsilon_{m+1}} \cdot \E(\z_{m+1}[t'/\xbold])_2 
\end{align*}
with the last congruence justified by Lemma \ref{canonicalformlemma}. 

Suppose first that $\z_{m+1} \neq \xbold$. Then we have $\E(\z_{m+1}[t'/\xbold]) = \E(\z_{m+1}) = (\z_{m+1}, e)$, so it follows that
\begin{align*}
&\quad \ \E(t[t'/\xbold])_2 \\	
&\sim \E(t')_2 \cdot \left(\xbold \cdot \z_1^{\epsilon_1} \cdot \ldots \cdot \z_m^{\epsilon_m} \right)[\W(t')/\xbold] \cdot \E(\z_{m+1}[t'/\xbold])_2^{-1} \cdot \E(\z_{m+1}[t'/\xbold])_1^{\epsilon_{m+1}} \cdot \E(\z_{m+1}[t'/\xbold])_2 \\
&\equiv \E(t')_2 \cdot \left(\xbold \cdot \z_1^{\epsilon_1} \cdot \ldots \cdot \z_m^{\epsilon_m} \right)[\W(t')/\xbold] \cdot e^{-1} \cdot \z_{m+1}^{\epsilon_{m+1}} \cdot e \\
&\sim \E(t')_2 \cdot \left(\xbold \cdot \z_1^{\epsilon_1} \cdot \ldots \cdot \z_m^{\epsilon_m} \right)[\W(t')/\xbold] \cdot \z_{m+1}^{\epsilon_{m+1}} \\
&\equiv \E(t')_2 \cdot \left(\xbold \cdot \z_1^{\epsilon_1} \cdot \ldots \cdot \z_{m+1}^{\epsilon_{m+1}} \right)[\W(t')/\xbold] \\
&\sim \E(t')_2 \cdot \E(t)_2[\W(t')/\xbold]   
\end{align*}
as desired, with the last equality justified by Lemma \ref{canonicalformlemma} and the assumption that $\z_{m+1} \neq \xbold$.  \par

Now suppose that $\z_{m+1} = \xbold$. Then we have $\E(\z_{m+1}[t'/\xbold]) = \E(\xbold[t'/\xbold]) = \E(t')$, and so we have
\begin{align*}
&\quad \ \E(t[t'/\xbold])_2 \\	
&\sim \E(t')_2 \cdot \left(\xbold \cdot \z_1^{\epsilon_1} \cdot \ldots \cdot \z_m^{\epsilon_m} \right)[\W(t')/\xbold] \cdot \E(\z_{m+1}[t'/\xbold])_2^{-1} \cdot \E(\z_{m+1}[t'/\xbold])_1^{\epsilon_{m+1}} \cdot \E(\z_{m+1}[t'/\xbold])_2 \\
&\equiv \E(t')_2 \cdot \left(\xbold \cdot \z_1^{\epsilon_1} \cdot \ldots \cdot \z_m^{\epsilon_m} \right)[\W(t')/\xbold] \cdot \E(t')_2^{-1} \cdot \E(t')_1^{\epsilon_{m+1}} \cdot \E(t')_2 \\ 
&= \E(t')_2 \cdot \left(\xbold \cdot \z_1^{\epsilon_1} \cdot \ldots \cdot \z_m^{\epsilon_m} \right)[\W(t')/\xbold] \cdot \E(t')_2^{-1} \cdot \xbold^{\epsilon_{m+1}} \cdot \E(t')_2 \\
&\sim \E(t')_2 \cdot \left(\xbold \cdot \z_1^{\epsilon_1} \cdot \ldots \cdot \z_m^{\epsilon_m} \right)[\W(t')/\xbold] \cdot \left(\E(t')_2^{-1} \cdot \xbold \cdot \E(t')_2\right)^{\epsilon_{m+1}} \\
&\equiv \E(t')_2 \cdot \left(\xbold \cdot \z_1^{\epsilon_1} \cdot \ldots \cdot \z_m^{\epsilon_m} \right)[\W(t')/\xbold] \cdot \W(t')^{\epsilon_{m+1}} \\ 
&\equiv \E(t')_2 \cdot \left(\xbold \cdot \z_1^{\epsilon_1} \cdot \ldots \cdot \z_{m+1}^{\epsilon_{m+1}} \right)[\W(t')/\xbold] \\
&\sim \E(t')_2 \cdot \E(t)_2[\W(t')/\xbold],     
\end{align*}
as desired. The third equality follows by assumption on $t'$, the fourth equality follows because for any group $G$ and $g, h \in G$ and $z = \pm 1$ we have $(g^{-1}hg)^z = g^{-1}h^zg$, the fifth equality follows by definition of $\W(t')$, the sixth equality follows because $\z_{m+1} = \xbold$, and the last congruence follows by Lemma \ref{canonicalformlemma}. This completes the induction and hence the proof.
\end{itemize}
\end{proof}

\noindent We finally require the following two technical lemmas about reduced words in free groups. 

\begin{lemma}
\label{firstreducedwordlemma}
Let $s \in \Term^c(\Sigma_\Grp(\xbold, \y_1, \ldots, \y_n))$ be reduced. Then in the free group on the set $\{\xbold_0, \xbold_1, \y_1, \ldots, \y_n\}$, the following claims hold:

\begin{enumerate}[(i)]

\item If $s \equiv e$, then the unique reduced word obtained from $\xbold_1 \: \cdot \: s[\xbold_1^{-1} \xbold_0 \xbold_1/\xbold]$ ends in $\xbold_1$.

\item If $s$ ends in $\xbold$, then the unique reduced word obtained from $\xbold_1 \cdot s[\xbold_1^{-1} \xbold_0 \xbold_1/\xbold]$ ends in $\xbold_0\xbold_1$.

\item If $s$ ends in $\xbold^{-1}$, then the unique reduced word obtained from $\xbold_1 \cdot s[\xbold_1^{-1} \xbold_0 \xbold_1/\xbold]$ ends in $\xbold_0^{-1}\xbold_1$.

\item If $s$ ends in $\y_i^{\epsilon}$ for some $1 \leq i \leq n$ and $\epsilon = \pm 1$, then the unique reduced word obtained from $\xbold_1 \cdot s[\xbold_1^{-1} \xbold_0 \xbold_1/\xbold]$ ends in $\xbold_1 \cdot t$ for some reduced word $t \in \Term^c(\Sigma_\Grp(\y_1, \ldots, \y_n))$ that ends in $\y_i^{\epsilon}$. 

\end{enumerate}

\end{lemma}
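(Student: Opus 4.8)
The plan is to prove all four claims \emph{simultaneously} by induction on the length (number of letters) of the reduced word $s$, organized around the \emph{last} letter of $s$. The one structural observation that drives everything is that the substitution $[\xbold_1^{-1}\xbold_0\xbold_1/\xbold]$ replaces each occurrence of $\xbold^{\pm 1}$ by the block $\xbold_1^{-1}\xbold_0^{\pm 1}\xbold_1$, which \emph{begins} with $\xbold_1^{-1}$ and \emph{ends} with $\xbold_1$. The prepended letter $\xbold_1$ is exactly what is needed to absorb a leading $\xbold_1^{-1}$, and it is what seeds the four suffix patterns recorded in (i)--(iv). I will also use throughout the standard uniqueness of reduced words (recalled earlier in the paper): it suffices to exhibit \emph{one} reduced word equal to $\xbold_1 \cdot s[\xbold_1^{-1}\xbold_0\xbold_1/\xbold]$ with the stated suffix.

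The base case is $s \equiv e$, which is claim (i): here $s[\xbold_1^{-1}\xbold_0\xbold_1/\xbold] \equiv e$, so $\xbold_1 \cdot s[\xbold_1^{-1}\xbold_0\xbold_1/\xbold] \equiv \xbold_1$ is already reduced and ends in $\xbold_1$. For the inductive step I would write $s \equiv s' \cdot \ell$, where $\ell$ is the final letter and $s'$ is the prefix obtained by deleting it (still reduced, being a prefix of a reduced word). Since $s'$ is reduced and strictly shorter, the induction hypothesis supplies the reduced form of $W := \xbold_1 \cdot s'[\xbold_1^{-1}\xbold_0\xbold_1/\xbold]$ through whichever of (i)--(iv) matches the last letter of $s'$ (or $s' \equiv e$). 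As substitution is a group homomorphism, $\xbold_1 \cdot s[\xbold_1^{-1}\xbold_0\xbold_1/\xbold] \equiv W \cdot \ell[\xbold_1^{-1}\xbold_0\xbold_1/\xbold]$, so the task reduces to appending the (substituted) last letter to the already-reduced word $W$ and tracking cancellations.

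I would then split on $\ell$. If $\ell \equiv \y_i^{\epsilon}$, then $\ell[\xbold_1^{-1}\xbold_0\xbold_1/\xbold] \equiv \y_i^{\epsilon}$, and reducedness of $s$ forbids the last letter of $s'$ from being $\y_i^{-\epsilon}$, so appending $\y_i^{\epsilon}$ causes no cancellation; inspecting the sub-cases for the last letter of $s'$ (ending in $\xbold^{\pm 1}$, so $W$ ends in $\xbold_0^{\pm 1}\xbold_1$, whence $t := \y_i^{\epsilon}$; or ending in a $\y$-letter, so $W$ ends in $\xbold_1 \cdot t'$ and $t := t' \y_i^{\epsilon}$) yields precisely the suffix $\xbold_1 \cdot t$ required by (iv). If $\ell \equiv \xbold$, then $\ell[\xbold_1^{-1}\xbold_0\xbold_1/\xbold] \equiv \xbold_1^{-1}\xbold_0\xbold_1$ and reducedness forbids $s'$ from ending in $\xbold^{-1}$; so either $s'$ ends in $\xbold$, in which case the hypothesis gives $W$ ending in $\xbold_0\xbold_1$ and the leading $\xbold_1^{-1}$ of the appended block cancels that terminal $\xbold_1$, coalescing the two $\xbold_0$'s and leaving a word ending in $\xbold_0\xbold_1$, or $s'$ ends in a $\y$-letter, in which case $W$ ends in $\xbold_1 \cdot t'$, the appended $\xbold_1^{-1}$ does not cancel against a $\y$-letter, and again the word ends in $\xbold_0\xbold_1$ --- this is claim (ii). The case $\ell \equiv \xbold^{-1}$ is entirely symmetric and gives (iii). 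In each sub-case I would confirm that no cancellation propagates further left by noting that $W$ was already reduced, so the letter preceding the claimed suffix has a compatible sign.

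The main obstacle, and the only place genuine care is needed, is the cancellation bookkeeping when $s$ contains a run of consecutive same-sign $\xbold$'s: the interior $\xbold_1 \xbold_1^{-1}$ pairs collapse and the corresponding $\xbold_0$ letters coalesce into a power $\xbold_0^{\pm k}$, and I must check that, despite this collapse, the \emph{terminal} two letters remain $\xbold_0^{\pm 1}\xbold_1$ and that the coalesced power does not cancel against what precedes it. This is exactly what the induction hypothesis delivers, since it records the reduced form of $W$ down to its last letters; consequently the collapse is entirely confined to the single step of appending $\ell[\xbold_1^{-1}\xbold_0\xbold_1/\xbold]$, and a purely local analysis at the right-hand end of $W$ suffices to close the induction.
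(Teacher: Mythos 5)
Your proposal is correct and follows essentially the same route as the paper: induction on the length of $s$, peeling off the last letter, invoking the inductive hypothesis (cases (i)--(iv)) on the shorter prefix, and checking that the cancellation caused by appending $\y_i^{\epsilon}$, $\xbold_1^{-1}\xbold_0\xbold_1$, or $\xbold_1^{-1}\xbold_0^{-1}\xbold_1$ stays local to the recorded suffix. The only cosmetic difference is that the paper treats the one-letter words $\xbold$, $\xbold^{-1}$, $\y_i^{\epsilon}$ as explicit base cases, whereas you absorb them into the inductive step via the sub-case $s' \equiv e$; this changes nothing of substance.
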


\begin{proof}
We prove this by induction on the length of $s$. 
\begin{itemize}
\item If $s \equiv e$, then (i) clearly holds. 

\item If $s \equiv \xbold$, then we have 
\[ \xbold_1 \cdot s[\xbold_1^{-1} \xbold_0 \xbold_1/\xbold] \equiv \xbold_1 \cdot \xbold_1^{-1} \xbold_0 \xbold_1 \sim \xbold_0 \xbold_1, \]
as desired for (ii). 

\item If $s \equiv \xbold^{-1}$, then we have 
\[ \xbold_1 \cdot s[\xbold_1^{-1} \xbold_0 \xbold_1/\xbold] \equiv \xbold_1 \cdot (\xbold_1^{-1} \xbold_0 \xbold_1)^{-1} \sim \xbold_1 \xbold_1^{-1} \xbold_0^{-1} \xbold_1 \sim \xbold_0^{-1} \xbold_1, \]
as required for (iii). 

\item If $s \equiv \y_i^{\epsilon}$ for some $1 \leq i \leq n$ and $\epsilon = \pm 1$, then we have
\[ \xbold_1 \cdot s[\xbold_1^{-1} \xbold_0 \xbold_1/\xbold] \equiv \xbold_1 \cdot \y_i^{\epsilon}, \] 
as desired for (iv).

\item Now let $s \in \Term^c(\Sigma_\Grp(\xbold, \y_1, \ldots, \y_n))$ be reduced of length $n$ for some $n \geq 1$, and assume that the result holds for $s$. We show that the result holds for $s \cdot \xbold^{\pm 1}$ and $s \cdot \y_i^{\pm 1}$ (for any $1 \leq i \leq n$), assuming that these words are reduced. 
\begin{itemize}
\item First we consider $s \cdot \xbold$. If this word is reduced, then $s$ does not end with $\xbold^{-1}$. So then $s$ ends with either $\xbold$ or $\y_i^{\epsilon}$ for some $1 \leq i \leq n$ and $\epsilon = \pm 1$. 

Suppose first that $s$ ends with $\xbold$. Then by the induction hypothesis, there is some reduced (possibly empty) word $t \in \Term^c(\Sigma_\Grp(\xbold, \y_1, \ldots, \y_n))$ such that 
\[ \xbold_1 \cdot s[\xbold_1^{-1} \xbold_0 \xbold_1/\xbold] \sim t \cdot \xbold_0 \xbold_1. \]
So then we have 
\begin{align*}
\xbold_1 \cdot (s \cdot \xbold)[\xbold_1^{-1} \xbold_0 \xbold_1/\xbold]	&\sim \xbold_1 \cdot s[\xbold_1^{-1} \xbold_0 \xbold_1/\xbold] \cdot \xbold_1^{-1} \xbold_0 \xbold_1 \\
								&\sim t \cdot \xbold_0 \xbold_1 \cdot \xbold_1^{-1} \xbold_0 \xbold_1 \\
								&\sim t \cdot \xbold_0 \xbold_0 \xbold_1,
\end{align*}
so that the reduced word obtained from $\xbold_1 \cdot (s \cdot \xbold)[(\xbold_1^{-1} \xbold_0 \xbold_1)/\xbold]$ ends in $\xbold_0 \xbold_1$, as desired for (ii). \par

Now suppose that $s$ ends with $\y_i^{\epsilon}$ for some $1 \leq i \leq n$ and $\epsilon = \pm 1$. Then by the induction hypothesis, there is some reduced (possibly empty) word $t \in \Term^c(\Sigma_\Grp(\xbold, \y_1, \ldots, \y_n))$ with
\[ \xbold_1 \cdot s[\xbold_1^{-1} \xbold_0 \xbold_1/\xbold] \sim t \cdot \xbold_1 \cdot t' \]
for some reduced $t' \in \Term^c(\Sigma_\Grp(\y_1, \ldots, \y_n))$ that ends in $\y_i^{\epsilon}$.
So then we have 
\begin{align*}
\xbold_1 \cdot (s \cdot \xbold)[\xbold_1^{-1} \xbold_0 \xbold_1/\xbold]	&\sim \xbold_1 \cdot s[\xbold_1^{-1} \xbold_0 \xbold_1/\xbold] \cdot \xbold_1^{-1} \xbold_0 \xbold_1 \\
								&\sim t \cdot \xbold_1 \cdot t' \cdot \xbold_1^{-1} \xbold_0 \xbold_1,
\end{align*}
so that the reduced word obtained from $\xbold_1 \cdot (s \cdot \xbold)[(\xbold_1^{-1} \xbold_0 \xbold_1)/\xbold]$ again ends in $\xbold_0 \xbold_1$, as desired for (ii). This completes the case for $s \cdot \xbold$. \par

\item Now we consider $s \cdot \xbold^{-1}$. If this word is reduced, then $s$ does not end with $\xbold$. So then $s$ ends with either $\xbold^{-1}$ or $\y_i^{\epsilon}$ for some $1 \leq i \leq n$ and $\epsilon = \pm 1$. 

Suppose first that $s$ ends with $\xbold^{-1}$. Then by the induction hypothesis, there is some reduced (possibly empty) word $t \in \Term^c(\Sigma_\Grp(\xbold, \y_1, \ldots, \y_n))$ with 
\[ \xbold_1 \cdot s[\xbold_1^{-1} \xbold_0 \xbold_1/\xbold] \sim t \cdot \xbold_0^{-1} \xbold_1. \]
So then we have 
\begin{align*}
\xbold_1 \cdot (s \cdot \xbold^{-1})[\xbold_1^{-1} \xbold_0 \xbold_1/\xbold]	&\sim \xbold_1 \cdot s[\xbold_1^{-1} \xbold_0 \xbold_1/\xbold] \cdot \xbold_1^{-1} \xbold_0^{-1} \xbold_1 \\
									&\sim t \cdot \xbold_0^{-1} \xbold_1 \cdot \xbold_1^{-1} \xbold_0^{-1} \xbold_1 \\
									&\sim t \cdot \xbold_0^{-1} \xbold_0^{-1} \xbold_1,
\end{align*}
so that the reduced word obtained from $\xbold_1 \cdot (s \cdot \xbold^{-1})[\xbold_1^{-1} \xbold_0 \xbold_1/\xbold]$ ends in $\xbold_0^{-1} \xbold_1$, as desired for (iii). \par

Now suppose that $s$ ends with $\y_i^{\epsilon}$ for some $1 \leq i \leq n$ and $\epsilon = \pm 1$. Then by the induction hypothesis, there is some reduced (possibly empty) word $t \in \Term^c(\Sigma_\Grp(\xbold, \y_1, \ldots, \y_n))$ with 
\[ \xbold_1 \cdot s[\xbold_1^{-1} \xbold_0 \xbold_1/\xbold] \sim t \cdot \xbold_1 \cdot t' \]
for some reduced $t' \in \Term^c(\Sigma_\Grp(\y_1, \ldots, \y_n))$ that ends with $\y_i^{\epsilon}$. Then we have 
\begin{align*}
\xbold_1 \cdot (s \cdot \xbold^{-1})[\xbold_1^{-1} \xbold_0 \xbold_1/\xbold]	&\sim \xbold_1 \cdot s[\xbold_1^{-1} \xbold_0 \xbold_1/\xbold] \cdot \xbold_1^{-1} \xbold_0^{-1} \xbold_1 \\
									&\sim t \cdot \xbold_1 \cdot t' \cdot \xbold_1^{-1} \xbold_0^{-1} \xbold_1,
\end{align*}
so that the reduced word obtained from $\xbold_1 \cdot (s \cdot \xbold^{-1})[\xbold_1^{-1} \xbold_0 \xbold_1/\xbold]$ again ends in $\xbold_0^{-1} \xbold_1$, as desired for (iii). This completes the case for $s \cdot \xbold^{-1}$. \par

\item Lastly we consider $s \cdot \y_i^{\epsilon}$ for any $1 \leq i \leq n$ and $\epsilon = \pm 1$. If this word is reduced, then $s$ does not end with $\y_i^{-\epsilon}$ (equating $-(-1)$ with $1$). So then $s$ ends with $\xbold$, with $\xbold^{-1}$, with $\y_i^{\epsilon}$, or with $\y_j^{\delta}$ for any $1 \leq j \neq i \leq n$ and $\delta = \pm 1$. \par

If $s$ ends with $\xbold$, then by the induction hypothesis, there is some reduced (possibly empty) word $t \in \Term^c(\Sigma_\Grp(\xbold, \y_1, \ldots, \y_n))$ with 
\[ \xbold_1 \cdot s[\xbold_1^{-1} \xbold_0 \xbold_1/\xbold] \sim t \cdot \xbold_0 \xbold_1. \]
So then we have 
\begin{align*}
\xbold_1 \cdot (s \cdot \y_i^{\epsilon})[\xbold_1^{-1} \xbold_0 \xbold_1/\xbold]	&\sim \xbold_1 \cdot s[\xbold_1^{-1} \xbold_0 \xbold_1/\xbold] \cdot \y_i^{\epsilon} \\
								&\sim t \cdot \xbold_0 \xbold_1 \cdot \y_i^{\epsilon},
\end{align*}
so that the reduced word obtained from $\xbold_1 \cdot (s \cdot \y_i^{\epsilon})[\xbold_1^{-1} \xbold_0 \xbold_1/\xbold]$ ends in $\xbold_1 \y_i^{\epsilon}$, as desired for (iv). Exactly similar reasoning works for the case where $s$ ends with $\xbold^{-1}$. \par

Now suppose that $s$ ends with $\y_i^{\epsilon}$. Then by the induction hypothesis, there is some reduced (possibly empty) word $t \in \Term^c(\Sigma_\Grp(\xbold, \y_1, \ldots, \y_n))$ with 
\[ \xbold_1 \cdot s[\xbold_1^{-1} \xbold_0 \xbold_1/\xbold] \sim t \cdot \xbold_1 \cdot t' \]
for some reduced $t' \in \Term^c(\Sigma_\Grp(\y_1, \ldots, \y_n))$ that ends with $\y_i^{\epsilon}$. Then we have 
\begin{align*}
\xbold_1 \cdot (s \cdot \y_i^{\epsilon})[\xbold_1^{-1} \xbold_0 \xbold_1/\xbold]	&\sim \xbold_1 \cdot s[\xbold_1^{-1} \xbold_0 \xbold_1/\xbold] \cdot \y_i^{\epsilon} \\
										&\sim t \cdot \xbold_1 \cdot t' \cdot \y_i^{\epsilon},
\end{align*}
so that the reduced word obtained from $\xbold_1 \cdot (s \cdot \y_i^{\epsilon})[\xbold_1^{-1} \xbold_0 \xbold_1/\xbold]$ has the form required for (iv). 

Finally, suppose that $s$ ends with $\y_j^{\delta}$ for some $1 \leq j \neq i \leq n$ and $\delta = \pm 1$. Then by the induction hypothesis, there is some reduced (possibly empty) word $t \in \Term^c(\Sigma_\Grp(\xbold, \y_1, \ldots, \y_n))$ with 
\[ \xbold_1 \cdot s[\xbold_1^{-1} \xbold_0 \xbold_1/\xbold] \sim t \cdot \xbold_1 \cdot t' \]
for some reduced $t' \in \Term^c(\Sigma_\Grp(\y_1, \ldots, \y_n))$ that ends with $\y_j^{\delta}$. So we have 
\begin{align*}
\xbold_1 \cdot (s \cdot \y_i^{\epsilon})[\xbold_1^{-1} \xbold_0 \xbold_1/\xbold]	&\sim \xbold_1 \cdot s[\xbold_1^{-1} \xbold_0 \xbold_1/\xbold] \cdot \y_i^{\epsilon} \\
										&\sim t \cdot \xbold_1 \cdot t' \cdot \y_i^{\epsilon},
\end{align*}
so that the reduced word obtained from $\xbold_1 \cdot (s \cdot \y_i^{\epsilon})[\xbold_1^{-1} \xbold_0 \xbold_1/\xbold]$ again has the form required for (iv), because $j \neq i$ and hence $t \cdot \xbold_1 \cdot t' \cdot \y_i^{\epsilon}$ is reduced and ends with $\xbold_1 \cdot t''$ for some $t'' \in \Term^c(\Sigma_\Grp(\y_1, \ldots, \y_n))$  that ends with $\y_i^{\epsilon}$. \par
\end{itemize}
This completes the induction and hence the proof of the lemma.
\end{itemize}
\end{proof}

\begin{lemma}
\label{secondreducedwordlemma}
Let $s \in \Term^c(\Sigma_\Grp(\xbold, \y_1, \ldots, \y_n))$ be reduced, and assume that the congruence
\[ \xbold_1 \cdot s[\xbold_1^{-1} \xbold_0 \xbold_1/\xbold] \sim s[\xbold_0/\xbold] \cdot s[\xbold_1/\xbold]^{-1} \cdot \xbold_1 \cdot s[\xbold_1/\xbold] \]
holds in the free group on the set $\{\xbold_0, \xbold_1, \y_1, \ldots, \y_n\}$. Then all occurrences of $\xbold$ in $s$ must precede all occurrences of $\y_1, \ldots, \y_n$ in $s$. 
\end{lemma}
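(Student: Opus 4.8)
The plan is to argue by contradiction. Since the two sides of the displayed congruence are equal in the free group, they have the same reduced form, so it suffices to exhibit a structural feature of reduced words that is present in the right-hand side whenever some $\y_i^{\pm 1}$ precedes some $\xbold^{\pm 1}$ in $s$, but is \emph{never} present in the left-hand side. The feature I will track is the left-neighbour of each letter $\xbold_0^{\pm 1}$: I claim that in the reduced form of the left-hand side every maximal $\xbold_0$-block is either word-initial or immediately preceded by $\xbold_1^{-1}$, whereas if some $\y$ precedes some $\xbold$ in $s$, then the reduced right-hand side contains a maximal $\xbold_0$-block immediately preceded by some $\y_j^{\pm 1}$.

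First I would compute the reduced form of the left-hand side $\xbold_1\cdot s[\xbold_1^{-1}\xbold_0\xbold_1/\xbold]$. Writing $s$ as an alternating concatenation of maximal $\xbold$-blocks $\xbold^{k}$ and maximal nonempty $\y$-words, the substitution sends each block $\xbold^{k}$ to $\xbold_1^{-1}\xbold_0^{k}\xbold_1$ and fixes the $\y$-words; since distinct $\xbold$-blocks are always separated by a $\y$-word, no further cancellation occurs, and prepending $\xbold_1$ only cancels the leading $\xbold_1^{-1}$ in the case where $s$ begins with $\xbold$. (The ending of this reduced word is exactly the content of Lemma \ref{firstreducedwordlemma}, and the same bookkeeping yields the whole word.) Consequently every $\xbold_0$-block of the reduced left-hand side is immediately preceded by $\xbold_1^{-1}$, apart from a possible word-initial block; in particular no $\xbold_0$-block is preceded by a $\y$. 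For the right-hand side $s[\xbold_0/\xbold]\cdot s[\xbold_1/\xbold]^{-1}\cdot\xbold_1\cdot s[\xbold_1/\xbold]$, the letters $\xbold_0^{\pm 1}$ occur only in the leftmost factor $s[\xbold_0/\xbold]$, where each $\xbold$-block of $s$ becomes an $\xbold_0$-block with the same left-neighbour it had in $s$. Hence, if the claim fails---i.e.\ if some $\y$ precedes some $\xbold$ in $s$---then $s$ has a maximal $\xbold$-block that is not word-initial, whose immediate left-neighbour is therefore some $\y_j^{\pm 1}$, and the corresponding $\xbold_0$-block of $s[\xbold_0/\xbold]$ is then immediately preceded by that same $\y_j^{\pm 1}$.

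The main obstacle is to verify that this incriminating adjacency survives the reductions on the right-hand side and that no spurious one is created. Let $v$ be the maximal $\y$-suffix of $s$ and write $s = u\cdot v$. The product $s[\xbold_0/\xbold]\cdot s[\xbold_1/\xbold]^{-1}$ telescopes through the shared suffix $v$ to $u[\xbold_0/\xbold]\cdot u[\xbold_1/\xbold]^{-1}$, with no further cancellation there since $u[\xbold_0/\xbold]$ ends in $\xbold_0^{\pm 1}$ while $u[\xbold_1/\xbold]^{-1}$ begins in $\xbold_1^{\mp 1}$; moreover the conjugate $s[\xbold_1/\xbold]^{-1}\cdot\xbold_1\cdot s[\xbold_1/\xbold]$ contains no $\xbold_0$ at all, so every $\xbold_0$-letter of the reduced right-hand side lies inside $u[\xbold_0/\xbold]$ with its left-neighbour unchanged from $u$ (equivalently from $s$). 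Thus the $\y_j^{\pm 1}$ immediately preceding an $\xbold_0$-block persists in the reduced right-hand side, contradicting the structure of the reduced left-hand side established above. Therefore no $\y$ can precede any $\xbold$ in $s$, which is exactly the assertion that all occurrences of $\xbold$ precede all occurrences of $\y_1,\ldots,\y_n$. I expect the delicate points to be the uniform treatment of the word-initial $\xbold_0$-block (so that its mere lack of a left-neighbour is not confused with the forbidden pattern) and the careful verification that the junction and conjugation reductions on the right-hand side never bring a $\y$ up against an $\xbold_0$-block that did not already abut one in $s$.
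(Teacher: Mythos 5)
Your proposal is correct, and it reaches the contradiction by a genuinely different mechanism than the paper's proof. The paper also argues by contradiction, writing $s \equiv s_1 \y_i^{\epsilon} s_2 \xbold^{\pm 1} s_3$, but it then does a case analysis on the last letter of $s_1$, invoking Lemma \ref{firstreducedwordlemma} in each case to control how $\xbold_1 \cdot s_1[\xbold_1^{-1}\xbold_0\xbold_1/\xbold]$ ends, and it derives the contradiction from a \emph{prefix} invariant: the reduced left-hand side has an occurrence of $\xbold_1$ before the first occurrence of $\y_i^{\epsilon}$, while the reduced right-hand side does not. You instead use an \emph{adjacency} invariant (the left-neighbour of each maximal $\xbold_0$-block), and your two supporting computations are sound: the block-by-block normal form of $\xbold_1 \cdot s[\xbold_1^{-1}\xbold_0\xbold_1/\xbold]$ (each maximal $\xbold$-block $\xbold^k$ of $s$ becomes $\xbold_1^{-1}\xbold_0^{k}\xbold_1$, separated by the untouched $\y$-words, with only the initial $\xbold_1\xbold_1^{-1}$ possibly cancelling), and the telescoping of the right-hand side through the maximal $\y$-suffix $v$ of $s = u\cdot v$, after which $u[\xbold_0/\xbold]$ survives verbatim as a reduced prefix because the rest of the word contains no letter $\xbold_0^{\pm 1}$ and so can never cancel against the terminal $\xbold_0^{\pm 1}$ of $u[\xbold_0/\xbold]$. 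Your route buys a cleaner argument: no case analysis on endings, no separate induction (your normal-form computation subsumes Lemma \ref{firstreducedwordlemma}, as you note), and fully explicit reduced forms on both sides. The paper's route buys economy of means: it needs only local information about the prefix of $s$ up to the offending letter and reuses the already-proven Lemma \ref{firstreducedwordlemma}. One small remark: your worry about ``spurious'' adjacencies on the right-hand side is unnecessary --- since the reduced left-hand side has no $\y$-letter immediately preceding an $\xbold_0$-block at all, it suffices that the one incriminating adjacency survives, which you establish.
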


\begin{proof}
Suppose towards a contradiction that $s$ satisfies the assumptions but contains an occurrence of $\y_i$ (for some $1 \leq i \leq n$) to the left of some occurrence of $\xbold$. Then there are $\epsilon = \pm 1$ and reduced (possibly empty) words $s_1, s_2, s_3 \in \Term^c(\Sigma_\Grp(\xbold, \y_1, \ldots, \y_n))$ with
\[ s \equiv s_1 \y_i^{\epsilon} s_2 \xbold s_3 \] or \[ s \equiv s_1 \y_i^{\epsilon} s_2 \xbold^{-1} s_3. \] Suppose first that $s \equiv s_1 \y_i^{\epsilon} s_2 \xbold s_3$. Since $s$ is reduced, it follows that $s_1$ does not end in $\y_i^{-\epsilon}$, that $s_2$ does not start with $\y_i^{-\epsilon}$ or end with $\xbold^{-1}$, and that $s_3$ does not start with $\xbold^{-1}$. The assumption on $s$ then implies that
\[ \xbold_1 \cdot s_1[\xbold_1^{-1} \xbold_0 \xbold_1/\xbold] \cdot \y_i^{\epsilon} \cdot s_2[\xbold_1^{-1} \xbold_0 \xbold_1/\xbold] \cdot \xbold_1^{-1} \xbold_0 \xbold_1 \cdot s_3[\xbold_1^{-1} \xbold_0 \xbold_1/\xbold] \] 
\[ \sim \] 
\[ s_1[\xbold_0/\xbold] \y_i^{\epsilon} s_2[\xbold_0/\xbold] \xbold_0 s_3[\xbold_0/\xbold] \cdot s_3^{-1}[\xbold_1/\xbold] \xbold_1^{-1} s_2^{-1}[\xbold_1/\xbold] \y_i^{-\epsilon} s_1^{-1}[\xbold_1/\xbold] \] \[ \cdot \ \xbold_1 \cdot s_1[\xbold_1/\xbold] \y_i^{\epsilon} s_2[\xbold_1/\xbold] \xbold_1 s_3[\xbold_1/\xbold]. \]
If $s_1$ is the empty word, then the reduction of the top word will begin with $\xbold_1 \y_i^{\epsilon}$, while the reduction of the bottom word will begin with just $\y_i^{\epsilon}$, which is impossible, since the reductions of these words are congruent in the free group on $\{\xbold_0, \xbold_1, \y_1, \ldots, \y_n\}$ and hence must be identical. \par

If $s_1$ is non-empty and ends with $\xbold$, then since $s_1$ is reduced, it will follow from Lemma \ref{firstreducedwordlemma} that the reduction of the top word will begin with $t \cdot \xbold_0 \xbold_1 \y_i^{\epsilon}$ for some reduced $t \in \Term^c(\Sigma_\Grp(\xbold_0, \xbold_1, \y_1, \ldots, \y_n))$. In particular, the reduced word obtained from the top word will have an occurrence of $\xbold_1$ before the first occurrence of $\y_i^{\epsilon}$. However, the reduced word obtained from the bottom word will \emph{not} have any occurrences of $\xbold_1$ before the first occurrence of $\y_i^{\epsilon}$, which is impossible for the reason given in the last paragraph. If $s_1$ is non-empty and ends with $\xbold^{-1}$, or with $\y_i^{\epsilon}$, or with $\y_j^{\delta}$ for some $1 \leq j \neq i \leq n$ and $\delta = \pm 1$, then exactly similar reasoning (with the use of Lemma \ref{firstreducedwordlemma}) leads to a contradiction. \par

This proves that we cannot have $s \equiv s_1 \y_i^{\epsilon} s_2 \xbold s_3$, and parallel reasoning also shows that we cannot have $s \equiv s_1 \y_i^{\epsilon} s_2 \xbold^{-1} s_3$ either, which contradicts the original assumption. So it follows that all occurrences of $\xbold$ in $s$ must precede all occurrences of $\y_1, \ldots, \y_n$ in $s$, as desired. 
\end{proof}

\noindent We can finally give a characterization of the logical isotropy groups of the free, finitely generated racks. First, the following result was proven in \cite[Section 4.1]{SDworld}:

\begin{theorem}[Dehornoy \cite{SDworld}]
\label{rackstheorem}
For any (finite) set $X$ and $s, t \in \Term^c(\Sigma(X))$, let $\E(s) = (\xbold, \omega)$ and $\E(t) = (\xbold', \omega')$ for some $\xbold, \xbold' \in X$ and $\omega, \omega' \in \Term^c(\Sigma_\Grp(X))$. Then \[ \T_\Rack(X) \vdash s = t \] iff \[ \xbold = \xbold' \ \text{and} \ \omega \sim \omega. \] \qed
\end{theorem}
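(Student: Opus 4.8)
The plan is to prove the theorem by exhibiting $\HalfConj(X, \F_X)$ as (a presentation of) the free rack on $X$, with the map $\E$ realizing the quotient of $\Term^c(\Sigma(X))$ by $\sim_\Rack$. First I would check directly that $\HalfConj(X, \F_X)$, under the operations $(\xbold,[s]) \lhd^{\epsilon} (\y,[t]) = (\xbold, [s\,t^{-1}\y^{\epsilon} t])$, is a rack: the two inverse axioms amount to the cancellation $t^{-1}\y^{\epsilon}t \cdot t^{-1}\y^{-\epsilon}t \sim e$, and self-distributivity is a short free-group computation. Next I would observe that $\E$ is nothing but the unique rack-homomorphic extension of the assignment $\xbold \mapsto (\xbold, e)$ from $\Term^c(\Sigma(X))$ (regarded as the absolutely free $\Sigma$-algebra on $X$) to $\HalfConj(X, \F_X)$; this is immediate from the recursive clauses defining $\E$, since $\E(s \lhd^{\epsilon} t)$ is exactly the rack operation of $\HalfConj(X, \F_X)$ applied to $\E(s)$ and $\E(t)$. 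Consequently the relation $\{(s,t) : \E(s) = \E(t)\}$ is a $\Sigma$-congruence that contains every instance of the rack axioms (because $\HalfConj(X, \F_X)$ satisfies them), and hence contains $\sim_\Rack$. This already yields the ``only if'' direction: if $\T_\Rack(X) \vdash s = t$ then $\E(s) = \E(t)$, i.e. $\xbold = \xbold'$ and $\omega \sim \omega'$.

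The substance of the theorem is the converse, namely that $\E(s) = \E(t)$ forces $s \sim_\Rack t$, which says that $\E$ is injective on $\sim_\Rack$-classes. For this I would prove a normal-form theorem: every $t \in \Term^c(\Sigma(X))$ is $\sim_\Rack$-congruent to a left-associated term $\z_0 \lhd^{\epsilon_1} \z_1 \lhd^{\epsilon_2} \ldots \lhd^{\epsilon_m} \z_m$ with all $\z_k \in X$ and with reduced associated group word $\z_1^{\epsilon_1} \cdots \z_m^{\epsilon_m}$. Existence is shown by induction on the structure of $t$, the only nontrivial case being $t \equiv t_1 \lhd^{\epsilon} t_2$ once $t_1, t_2$ have been replaced by their normal forms. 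The engine here is the operator identity
\[ R_{a \lhd^{\eta} b} = R_b^{\eta}\, R_a\, R_b^{-\eta}, \qquad R_c := (-) \lhd c, \]
which is a direct restatement of self-distributivity together with the inverse laws. Peeling the outer generators off the normal form of $t_2$ and applying this identity repeatedly rewrites the action $(-)\lhd^{\epsilon} t_2$ as a finite composite of actions by the individual generators occurring in $t_2$; appending these to the (already left-associated) normal form of $t_1$ produces a left-associated term whose multipliers are all generators, after which the inverse laws cancel adjacent pairs $\ldots \lhd^{\delta}\z \lhd^{-\delta}\z\ldots$ to make the group word reduced.

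With the normal form in hand the theorem closes quickly, using Lemma \ref{canonicalformlemma}: a normal form $\z_0 \lhd^{\epsilon_1} \z_1 \ldots \lhd^{\epsilon_m} \z_m$ has $\E$-image $(\z_0,\ \z_1^{\epsilon_1} \cdots \z_m^{\epsilon_m})$ whose second coordinate is already reduced. Since reduced words are unique representatives of their $\sim$-classes in a free group, two normal forms with equal $\E$-images must share the same leading generator and the same reduced group word, and are therefore syntactically identical. Hence, given $s, t$ with $\E(s) = \E(t)$, picking normal forms $\hat{s}, \hat{t}$ with $s \sim_\Rack \hat{s}$ and $t \sim_\Rack \hat{t}$ gives $\E(\hat{s}) = \E(s) = \E(t) = \E(\hat{t})$, so $\hat{s} \equiv \hat{t}$ and thus $s \sim_\Rack t$, as required.

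I expect the existence of normal forms to be the main obstacle. The delicate points are verifying that action by a non-generator term can be pushed through the operator identity uniformly for both $\lhd$ and $\lhd^{-1}$ (tracking the exponents $\eta, \delta$ correctly), and confirming that the word one finally obtains is genuinely reduced rather than merely left-associated; this is the only place where self-distributivity is used essentially, and it is exactly the crux of Dehornoy's solution to the word problem for free racks.
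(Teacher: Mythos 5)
The paper does not actually prove this statement: it is imported from Dehornoy's work and stated with an immediate end-of-proof mark, and the rest of Section 4 uses it as a black box. So your proposal has to be judged on its own merits, and on those merits it is correct --- it is essentially the standard argument for Dehornoy's solution of the word problem for free racks. The ``only if'' direction is soundness, once $\HalfConj(X,\F_X)$ is verified to be a rack and $\E$ (composed with the quotient onto $X \times \F_X$) is recognized as the unique $\Sigma$-homomorphism out of the absolutely free $\Sigma$-algebra $\Term^c(\Sigma(X))$ sending $\xbold \mapsto (\xbold, e)$. The ``if'' direction is your normal-form theorem: existence by structural induction, powered by the operator identity $R_{a \lhd^{\eta} b} = R_b^{\eta} R_a R_b^{-\eta}$ together with free cancellation via the inverse axioms; uniqueness because, by Lemma \ref{canonicalformlemma} (whose proof is a direct induction on the definition of $\E$ and does not presuppose the present theorem, so there is no circularity), a normal form $\z_0 \lhd^{\epsilon_1} \z_1 \lhd^{\epsilon_2} \ldots \lhd^{\epsilon_m} \z_m$ is recoverable from its $\E$-image, and reduced words are unique in their $\sim_\Grp$-classes. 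Your identification of normal-form existence as the crux is accurate. (Incidentally, the ``$\omega \sim \omega$'' in the statement is a typo for $\omega \sim \omega'$; you read it the intended way.)

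One warning, which is a defect of the paper rather than of your argument, but which would bite you at your very first step. The axioms printed in Definition 2.1 are the \emph{left} self-distributive laws $x \lhd (y \lhd z) = (x \lhd y) \lhd (x \lhd z)$ combined with the \emph{right} cancellation laws $(x \lhd y) \lhd^{-1} y = x$. Under that literal reading, $\HalfConj(X, \F_X)$ is \emph{not} a rack, and the biconditional asserted by the theorem fails: taking $X = \{\xbold, \y, \z\}$, one computes
\[ \E\bigl(\xbold \lhd (\y \lhd \z)\bigr)_2 \sim \z^{-1} \cdot \y \cdot \z \qquad \text{whereas} \qquad \E\bigl((\xbold \lhd \y) \lhd (\xbold \lhd \z)\bigr)_2 \sim \y \cdot \z^{-1} \cdot \xbold \cdot \z, \]
two distinct reduced words, even though the printed axiom makes the two rack terms provably equal. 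Everything --- your verification that $\HalfConj(X,\F_X)$ is a rack, and the theorem itself --- goes through under \emph{right} self-distributivity, $(x \lhd y) \lhd z = (x \lhd z) \lhd (y \lhd z)$, which is the convention forced by Dehornoy's presentations $\Conj(\F_X)$ and $\HalfConj(X,\F_X)$, and is exactly what your operator identity $R_{a \lhd b} = R_b R_a R_b^{-1}$ encodes (the identity and the right-distributive law are equivalent given cancellation). Your sketch is therefore internally consistent, but when you ``check directly that $\HalfConj(X, \F_X)$ is a rack'' you must check the right-handed law: the ``short free-group computation'' succeeds for that law and fails for the law as printed in the paper.
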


\begin{theorem}[\textbf{Isotropy Group of a Free Rack}]
\label{rackisotropy}
Let $\R_n$ be the free rack on $n$ generators $\y_1, \ldots, \y_n$. For any $t \in \Term^c(\Sigma_\Grp(\xbold, \y_1, \ldots, \y_n))$, we have \[ [t] \in G_{\T_\Rack}(\R_n) \] iff there are $p, m \geq 0$ and $1 \leq i_1, \ldots, i_m \leq n$ such that
\[ [t] = \left[\xbold \lhd^{\delta_1} \ldots \lhd^{\delta_p} \xbold \lhd^{\epsilon_1} \y_{i_1} \lhd^{\epsilon_2} \ldots \lhd^{\epsilon_m} \y_{i_m}\right], \]
with $\delta_j = \pm 1$ for all $1 \leq j \leq p$ and $\epsilon_k = \pm 1$ for all $1 \leq k \leq m$, and the corresponding term $\xbold^{\delta_1} \ldots \xbold^{\delta_p} \y_{i_1}^{\epsilon_1} \ldots \y_{i_m}^{\epsilon_m} \in \Term^c(\Sigma_\Grp(\xbold, \y_1, \ldots, \y_n))$ is reduced. 
\end{theorem}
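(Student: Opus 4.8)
The plan is to prove both implications by pushing everything through $\E$ into the free group and applying Dehornoy's solution to the word problem (Theorem~\ref{rackstheorem}), so that membership in $G_{\T_\Rack}(\R_n)$ becomes a statement about reduced group words that is ultimately decided by Lemmas~\ref{firstreducedwordlemma} and~\ref{secondreducedwordlemma}. Throughout I write $w := \E(t)_2$ and keep a careful separation between the first $\E$-coordinate (an element of $X$, controlled by $\Left$) and the second (a group word).

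For the \emph{sufficiency} direction, suppose $t$ has the stated form. By Lemma~\ref{canonicalformlemma} we have $\E(t)_1 = \xbold$ and $\E(t)_2 \sim \xbold^a \cdot \y_{i_1}^{\epsilon_1} \cdots \y_{i_m}^{\epsilon_m}$, where $a := \delta_1 + \cdots + \delta_p$; the decisive structural feature is that every occurrence of $\xbold$ precedes every $\y_{i_k}$. To check generic commutation with $\lhd$, I translate the required equation via Theorem~\ref{rackstheorem}: the first coordinates agree (both equal $\xbold_0$, by Lemma~\ref{racksubstlemma}), and for the second coordinates Lemma~\ref{racksubstlemma} rewrites the left-hand side as $\xbold_1 \cdot w[\xbold_1^{-1}\xbold_0\xbold_1/\xbold]$ while the definition of $\E$ rewrites the right-hand side as $w[\xbold_0/\xbold] \cdot w[\xbold_1/\xbold]^{-1} \cdot \xbold_1 \cdot w[\xbold_1/\xbold]$. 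Because the $\xbold$'s in $w$ come first, both sides collapse by telescoping to $\xbold_0^{a}\,\xbold_1\,\y_{i_1}^{\epsilon_1}\cdots\y_{i_m}^{\epsilon_m}$, a one-line computation; the case of $\lhd^{-1}$ is identical. For invertibility I exhibit the candidate inverse $t^{-1} := \xbold\lhd^{-\delta_1}\cdots\lhd^{-\delta_p}\xbold\lhd^{-\epsilon_m}\y_{i_m}\cdots\lhd^{-\epsilon_1}\y_{i_1}$, which is again in canonical form with $\E(t^{-1})_1 = \xbold$ and $\E(t^{-1})_2 \sim w^{-1}$, and verify $t[t^{-1}/\xbold] \sim_\Rack \xbold$ and $t^{-1}[t/\xbold] \sim_\Rack \xbold$ via Lemma~\ref{mainracksubstlemma}: since $\W(t^{-1}) = \E(t^{-1})_2^{-1}\,\xbold\,\E(t^{-1})_2$, the formula of that lemma telescopes the second coordinate to $e$, once more using that the $\xbold$'s precede the $\y$'s.

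For the \emph{necessity} direction, suppose $[t] \in G_{\T_\Rack}(\R_n)$. First I use invertibility to force $\E(t)_1 = \xbold$: if instead $\Left(t) = \y_i \neq \xbold$, then for any putative inverse $t^{-1}$ the leftmost letter of $t[t^{-1}/\xbold]$ is still $\y_i$, so by Lemma~\ref{leftlemma} and Theorem~\ref{rackstheorem} we could not have $t[t^{-1}/\xbold] \sim_\Rack \xbold$. Next, let $s$ be the reduced word congruent to $w = \E(t)_2$. Running the sufficiency computation in reverse, generic commutation with $\lhd$ pushed through $\E$ and Theorem~\ref{rackstheorem} yields exactly the congruence $\xbold_1 \cdot s[\xbold_1^{-1}\xbold_0\xbold_1/\xbold] \sim s[\xbold_0/\xbold] \cdot s[\xbold_1/\xbold]^{-1} \cdot \xbold_1 \cdot s[\xbold_1/\xbold]$, which is verbatim the hypothesis of Lemma~\ref{secondreducedwordlemma}. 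That lemma then forces all occurrences of $\xbold$ in $s$ to precede all occurrences of the $\y_k$'s; since $s$ is reduced, the $\xbold$-block collapses to a single power $\xbold^a$, so $s \equiv \xbold^a\,\y_{i_1}^{\epsilon_1}\cdots\y_{i_m}^{\epsilon_m}$ for some reduced $\y$-word. Finally, writing $\xbold^a$ as $\xbold^{\delta_1}\cdots\xbold^{\delta_p}$ with $p = |a|$ and all $\delta_j$ equal to the sign of $a$, Lemma~\ref{canonicalformlemma} shows the canonical term $\xbold\lhd^{\delta_1}\cdots\lhd^{\delta_p}\xbold\lhd^{\epsilon_1}\y_{i_1}\cdots\lhd^{\epsilon_m}\y_{i_m}$ has $\E$-image $(\xbold, s)$, the same as $t$, whence $t$ is $\sim_\Rack$-congruent to it by Theorem~\ref{rackstheorem}.

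The genuinely hard combinatorics is already isolated in Lemmas~\ref{firstreducedwordlemma} and~\ref{secondreducedwordlemma}, so within this proof the main obstacle is organisational: massaging the generic-commutation equation through $\E$ and the definition of $\lhd$ until it matches the hypothesis of Lemma~\ref{secondreducedwordlemma} \emph{exactly}, and keeping the first-coordinate argument (invertibility plus Lemma~\ref{leftlemma}, giving $\E(t)_1 = \xbold$) cleanly separated from the second-coordinate group-word argument. I also expect to need the elementary renaming fact that substituting one constant $\xbold_j$ for $\xbold$ commutes with $\E$ up to $\sim$ (so that $\E(t[\xbold_j/\xbold])_2 \sim w[\xbold_j/\xbold]$ and $\E(t[\xbold_j/\xbold])_1$ is the renamed leftmost letter), which follows by a routine induction parallel to Lemma~\ref{racksubstlemma}.
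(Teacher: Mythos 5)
Your proposal follows essentially the same route as the paper's proof in both directions: sufficiency via Lemma \ref{canonicalformlemma}, Lemma \ref{racksubstlemma}, and Lemma \ref{mainracksubstlemma} pushed through Theorem \ref{rackstheorem}; necessity by (a) using invertibility together with $\Left$ and Lemma \ref{leftlemma} to force $\E(t)_1 = \xbold$ (the paper proves the substitution-invariance of $\Left$ as a small inductive Claim, which you treat as immediate --- that is fine), and (b) massaging generic commutation with $\lhd$ into exactly the hypothesis of Lemma \ref{secondreducedwordlemma}, then reassembling the reduced word via Lemma \ref{canonicalformlemma} and Theorem \ref{rackstheorem}. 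The renaming fact you flag at the end, $\E(t[\xbold_j/\xbold])_2 \sim \E(t)_2[\xbold_j/\xbold]$, is indeed needed and is used silently in the paper's computation, so making it explicit is an improvement.

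One intermediate claim in your sufficiency argument is wrong, though your candidate inverse and your method both survive. For
\[ t^{-1} \equiv \xbold \lhd^{-\delta_1} \cdots \lhd^{-\delta_p} \xbold \lhd^{-\epsilon_m} \y_{i_m} \cdots \lhd^{-\epsilon_1} \y_{i_1}, \]
Lemma \ref{canonicalformlemma} gives $\E(t^{-1})_2 \sim \xbold^{-a} \cdot \y_{i_m}^{-\epsilon_m} \cdots \y_{i_1}^{-\epsilon_1}$ where $a := \delta_1 + \cdots + \delta_p$; the $\xbold$-block stays at the \emph{front}. Writing $Y := \y_{i_1}^{\epsilon_1} \cdots \y_{i_m}^{\epsilon_m}$ and $w := \E(t)_2 \sim \xbold^a Y$, this is $\xbold^{-a} Y^{-1}$, which is \emph{not} $w^{-1} \sim Y^{-1}\xbold^{-a}$ unless $a = 0$ or $m = 0$, since we are in a free group. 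The distinction is not cosmetic: with the correct value one gets $\W(t^{-1}) \sim Y \xbold Y^{-1}$, and Lemma \ref{mainracksubstlemma} telescopes as you intend,
\[ \E(t[t^{-1}/\xbold])_2 \sim \xbold^{-a} Y^{-1} \cdot \left(Y \xbold Y^{-1}\right)^{a} Y \sim \xbold^{-a} Y^{-1} \cdot Y \xbold^{a} Y^{-1} Y \sim e; \]
but if $\E(t^{-1})_2$ really were $w^{-1}$, then $\W(t^{-1}) \sim \xbold^{a} Y \xbold Y^{-1} \xbold^{-a}$ and the same formula would produce $\xbold^{a} Y^{-1} \xbold^{-a} Y$, which is nontrivial whenever $a \neq 0$ and $m \geq 1$, so the verification would fail. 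In short: your $t^{-1}$ is a correct inverse (it agrees with the paper's up to the immaterial ordering of the $\delta_j$), and the Lemma \ref{mainracksubstlemma} route is exactly the paper's, but the telescoping must be run with the correct $\E$-image of $t^{-1}$ rather than with $w^{-1}$.
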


\begin{proof}
First we prove that if $t$ has the stated form, then $[t] \in G_{\T_\Rack}(\R_n)$. So let $t$ have the form described in the statement of the theorem. We must show that $[t]$ is invertible and commutes generically with the rack operations. \par

To show that $[t]$ is invertible, consider the term 
\[ t^{-1} := \xbold \lhd^{-\delta_p} \ldots \lhd^{-\delta_1} \xbold \lhd^{-\epsilon_m} \y_{i_m} \lhd^{-\epsilon_{m-1}} \ldots \lhd^{-\epsilon_1} \y_{i_1}. \]
To show that $t[t^{-1}/\xbold] \sim_\Rack \xbold$ and $t^{-1}[t/\xbold] \sim_\Rack \xbold$ in the free rack on $\{\xbold, \y_1, \ldots, \y_n\}$, it suffices by Theorem \ref{rackstheorem} to show that \[ \E(t[t^{-1}/\xbold])_1 = \E(\xbold)_1 = \E(t^{-1}[t/\xbold])_1 \] and \[ \E(t[t^{-1}/\xbold])_2 \sim \E(\xbold)_2 \sim \E(t^{-1}[t/\xbold])_2. \] We have $\E(\xbold) = (\xbold, e)$, and by Lemma \ref{canonicalformlemma} we have
\[ \E(t)_1 = \xbold = \E(t^{-1})_1 \] as well as
\[ \E(t)_2 \sim \xbold^{\delta_1} \ldots \xbold^{\delta_p} \y_{i_1}^{\epsilon_1} \ldots \y_{i_m}^{\epsilon_m} \]
and
\[ \E(t^{-1})_2 \sim \xbold^{-\delta_p} \ldots \xbold^{-\delta_1} \y_{i_m}^{-\epsilon_m} \ldots \y_{i_1}^{-\epsilon_1}. \] 
Note also that 
\begin{align*}
\W(t^{-1}) 	&:= \E(t^{-1})_2^{-1} \cdot \E(t^{-1})_1 \cdot \E(t^{-1})_2 \\
			&\sim \y_{i_1}^{\epsilon_1} \ldots \y_{i_m}^{\epsilon_m} \xbold^{\delta_1} \ldots \xbold^{\delta_p} \xbold \xbold^{-\delta_p} \ldots \xbold^{-\delta_1} \y_{i_m}^{-\epsilon_m} \ldots \y_{i_1}^{-\epsilon_1} \\
			&\sim \y_{i_1}^{\epsilon_1} \ldots \y_{i_m}^{\epsilon_m} \xbold^{\delta_1 + \ldots + \delta_p} \xbold \xbold^{-\delta_p + \ldots + -\delta_1} \y_{i_m}^{-\epsilon_m} \ldots \y_{i_1}^{-\epsilon_1} \\
			&\sim \y_{i_1}^{\epsilon_1} \ldots \y_{i_m}^{\epsilon_m} \xbold \y_{i_m}^{-\epsilon_m} \ldots \y_{i_1}^{-\epsilon_1}. 
\end{align*}
Then by Lemma \ref{mainracksubstlemma}, we have
\[ \E(t[t^{-1}/\xbold])_1 = \xbold \] and 
\begin{align*}
&\quad \ \E(t[t^{-1}/\xbold])_2 \\	
&\sim \E(t^{-1})_2 \cdot \E(t)_2[\W(t^{-1})/\xbold] \\
			&\sim \xbold^{-\delta_p} \ldots \xbold^{-\delta_1} \y_{i_m}^{-\epsilon_m} \ldots \y_{i_1}^{-\epsilon_1} \cdot \left(\xbold^{\delta_1} \ldots \xbold^{\delta_p} \y_{i_1}^{\epsilon_1} \ldots \y_{i_m}^{\epsilon_m}\right)[\W(t^{-1})/\xbold] \\
			&\sim \xbold^{-\delta_p} \ldots \xbold^{-\delta_1} \y_{i_m}^{-\epsilon_m} \ldots \y_{i_1}^{-\epsilon_1} \cdot \left(\xbold^{\delta_1 + \ldots + \delta_p} \y_{i_1}^{\epsilon_1} \ldots \y_{i_m}^{\epsilon_m}\right)[\W(t^{-1})/\xbold] \\
			&\sim \xbold^{-\delta_p} \ldots \xbold^{-\delta_1} \y_{i_m}^{-\epsilon_m} \ldots \y_{i_1}^{-\epsilon_1} \cdot \left(\xbold^{\delta_1 + \ldots + \delta_p} \y_{i_1}^{\epsilon_1} \ldots \y_{i_m}^{\epsilon_m}\right)[\y_{i_1}^{\epsilon_1} \ldots \y_{i_m}^{\epsilon_m} \xbold \y_{i_m}^{-\epsilon_m} \ldots \y_{i_1}^{-\epsilon_1}/\xbold] \\
			&\sim \xbold^{-\delta_p} \ldots \xbold^{-\delta_1} \y_{i_m}^{-\epsilon_m} \ldots \y_{i_1}^{-\epsilon_1} \cdot \left(\left(\y_{i_1}^{\epsilon_1} \ldots \y_{i_m}^{\epsilon_m} \xbold \y_{i_m}^{-\epsilon_m} \ldots \y_{i_1}^{-\epsilon_1}\right)^{\delta_1 + \ldots + \delta_p} \y_{i_1}^{\epsilon_1} \ldots \y_{i_m}^{\epsilon_m}\right) \\
			&\sim \xbold^{-\delta_p} \ldots \xbold^{-\delta_1} \y_{i_m}^{-\epsilon_m} \ldots \y_{i_1}^{-\epsilon_1} \cdot \y_{i_1}^{\epsilon_1} \ldots \y_{i_m}^{\epsilon_m} \xbold^{\delta_1 + \ldots + \delta_p} \y_{i_m}^{-\epsilon_m} \ldots \y_{i_1}^{-\epsilon_1} \y_{i_1}^{\epsilon_1} \ldots \y_{i_m}^{\epsilon_m} \\
			&\sim \xbold^{-\delta_p + \ldots + -\delta_1} \xbold^{\delta_1 + \ldots + \delta_p} \\
			&\sim e, 
\end{align*}
as desired, where the sixth congruence holds because in any group $G$ we have $\left(ghg^{-1}\right)^n = gh^ng^{-1}$ for any $g, h \in G$ and $n \in \mathbb{Z}$. The proof that $\E(t^{-1}[t/\xbold])_2 \sim e$ is similar. It follows that $[t]$ is invertible, as desired. \par

Now we show that $[t]$ commutes generically with the rack operations. So we must show that the following congruences hold in the free rack on $\{\xbold_0, \xbold_1, \y_1, \ldots, \y_n\}$:
\[ t[\xbold_0 \lhd \xbold_1/\xbold] \sim_\Rack t[\xbold_0/\xbold] \lhd t[\xbold_1/\xbold] \]
and
\[ t[\xbold_0 \lhd^{-1} \xbold_1/\xbold] \sim_\Rack t[\xbold_0/\xbold] \lhd^{-1} t[\xbold_1/\xbold]. \]
Since the proofs are similar, we only show the first. By Theorem \ref{rackstheorem}, it suffices to show that 
\[ \E(t[\xbold_0 \lhd \xbold_1/\xbold])_1 = \E(t[\xbold_0/\xbold] \lhd t[\xbold_1/\xbold])_1 \] and
\[ \E(t[\xbold_0 \lhd \xbold_1/\xbold])_2 \sim \E(t[\xbold_0/\xbold] \lhd t[\xbold_1/\xbold])_2. \]
By Lemma \ref{racksubstlemma}, since $\E(t)_1 = \xbold$, we have
\[ \E(t[\xbold_0 \lhd \xbold_1)/\xbold])_1 = \xbold_0 \] and
\begin{align*}
\E(t[\xbold_0 \lhd \xbold_1/\xbold])_2	&\sim \xbold_1 \cdot \E(t)_2[\xbold_1^{-1}\xbold_0\xbold_1/\xbold] \\
					&\sim \xbold_1 \cdot \left(\xbold^{\delta_1 + \ldots + \delta_p}\y_{i_1}^{\epsilon_1} \ldots \y_{i_m}^{\epsilon_m}\right)[\xbold_1^{-1}\xbold_0\xbold_1/\xbold] \\
					&\sim \xbold_1 \cdot \xbold_1^{-1} \xbold_0^{\delta_1 + \ldots + \delta_p} \xbold_1 \y_{i_1}^{\epsilon_1} \ldots \y_{i_m}^{\epsilon_m} \\
					&\sim \xbold_0^{\delta_1 + \ldots + \delta_p} \xbold_1 \y_{i_1}^{\epsilon_1} \ldots \y_{i_m}^{\epsilon_m},
\end{align*}
where the third congruence again holds because of the previously mentioned group-theoretic fact. Then, given that
\[ \E(t[\xbold_0/\xbold])_1 = \xbold_0 \] and
\[ \E(t[\xbold_0/\xbold])_2 \sim \xbold_0^{\delta_1 + \ldots + \delta_p} \y_{i_1}^{\epsilon_1} \ldots \y_{i_m}^{\epsilon_m} \]
and
\[ \E(t[\xbold_1/\xbold])_1 = \xbold_1 \] and
\[ \E(t[\xbold_1/\xbold])_2 \sim \xbold_1^{\delta_1 + \ldots + \delta_p} \y_{i_1}^{\epsilon_1} \ldots \y_{i_m}^{\epsilon_m}, \]
we obtain 
\[ \E(t[\xbold_0/\xbold] \lhd t[\xbold_1/\xbold])_1 = \E(t[\xbold_0/\xbold])_1 = \xbold_0 = \E(t[\xbold_0 \lhd \xbold_1/\xbold])_1 \] and
\begin{align*}
&\quad \ \E(t[\xbold_0/\xbold] \lhd t[\xbold_1/\xbold])_2 \\
&= \xbold_0^{\delta_1 + \ldots + \delta_p} \y_{i_1}^{\epsilon_1} \ldots \y_{i_m}^{\epsilon_m} \cdot \y_{i_m}^{-\epsilon_m} \ldots \y_{i_1}^{-\epsilon_1} \xbold_1^{-\delta_1 + \ldots + -\delta_p} \cdot \xbold_1 \cdot \xbold_1^{\delta_1 + \ldots + \delta_p} \y_{i_1}^{\epsilon_1} \ldots \y_{i_m}^{\epsilon_m} \\
						&\sim \xbold_0^{\delta_1 + \ldots + \delta_p} \xbold_1 \y_{i_1}^{\epsilon_1} \ldots \y_{i_m}^{\epsilon_m} \\
						&\sim \E(t[\xbold_0 \lhd \xbold_1/\xbold])_2,
\end{align*}
as required. This proves that $[t]$ commutes generically with the rack operations, which completes the proof that $[t] \in G_{\T_\Rack}(\R_n)$. \par

Now let $t \in \Term^c(\Sigma(\xbold, \y_1, \ldots, \y_n))$ with $[t] \in G_{\T_\Rack}(\R_n)$. We show that $t$ can be assumed to have the form in the statement of the theorem. \par 

First we show that $\E(t)_1 = \xbold$. Since $[t] \in G_{\T_\Rack}(\R_n)$, it follows that $[t]$ is invertible, and so there is some $s \in \Term^c(\Sigma(\xbold, \y_1, \ldots, \y_n))$ such that \[ t[s/\xbold] \sim_\Rack \xbold \sim_\Rack s[t/\xbold] \] in the free rack on $\{\xbold, \y_1, \ldots, \y_n\}$. By Theorem \ref{rackstheorem}, it then follows that $\E(t[s/\xbold])_1 = \E(\xbold)_1 = \xbold$. To show that $\E(t)_1 = \xbold$ follows from this, we first prove the following claim:

\begin{claim*}
Let $u, v \in \Term^c(\Sigma(\xbold, \y_1, \ldots, \y_n))$.

\begin{itemize}

\item If $\Left(u) = \xbold$, then $\Left(u[v/\xbold]) = \Left(v)$.

\item If $\Left(u) = \y_i$ for some $1 \leq i \leq n$, then $\Left(u[v/\xbold]) = \y_i$.
\end{itemize}
\end{claim*}

\begin{proof}
We prove this by induction on $u$ (for a fixed $v$). 
\begin{itemize}
\item If $u \equiv \xbold$, then we have $\Left(u) = \xbold$ and 
\[ \Left(u[v/\xbold]) = \Left(\xbold[v/\xbold]) = \Left(v), \] as desired.

\item If $u \equiv \y_i$ for some $1 \leq i \leq n$, then we have $\Left(u) = \y_i$ and
\[ \Left(u[v/\xbold]) = \Left(\y_i[v/\xbold]) = \Left(\y_i) = \y_i, \]
as desired. 

\item Suppose that $u \equiv u_1 \lhd u_2$ for some $u_1, u_2 \in \Term^c(\Sigma(\xbold, \y_1, \ldots, \y_n))$ for which the induction hypothesis holds. If $\Left(u_1 \lhd u_2) = \xbold$, then $\Left(u_1) = \xbold$ by definition of $\Left$. So by the induction hypothesis for $u_1$, we have $\Left(u_1[v/\xbold]) = \Left(v)$. Then we have
\[ \Left((u_1 \lhd u_2)[v/\xbold]) = \Left(u_1[v/\xbold] \lhd u_2[v/\xbold]) = \Left(u_1[v/\xbold]) = \Left(v), \]
as desired. 

If $\Left(u_1 \lhd u_2) = \y_i$ for some $1 \leq i \leq n$, then $\Left(u_1) = \y_i$ by definition of $\Left$. So by the induction hypothesis for $u_1$, we have $\Left(u_1[v/\xbold]) = \y_i$. Then we have
\[ \Left((u_1 \lhd u_2)[v/\xbold]) = \Left(u_1[v/\xbold] \lhd u_2[v/\xbold]) = \Left(u_1[v/\xbold]) = \y_i, \]
as desired.
\end{itemize}
\end{proof}

\noindent Recall from Lemma \ref{leftlemma} that $\E(u)_1 = \Left(u)$ for any $u \in \Term^c(\Sigma(\xbold, \y_1, \ldots, \y_n))$. So, given that $\E(t[s/\xbold])_1 = \xbold$, we then have $\Left(t[s/\xbold]) = \xbold$, and we want to show that $\Left(t) = \xbold$. But if we had $\Left(t) = \y_i$ for some $1 \leq i \leq n$ instead, then from the Claim it would follow that $\Left(t[s/\xbold]) = \y_i$ as well, contrary to assumption. So we must have $\Left(t) = \E(t)_1 = \xbold$, as desired. \par

Since $[t] \in G_{\T_\Rack}(\R_n)$, we know that $[t]$ commutes generically with the rack operations, and so it follows that $t[\xbold_0 \lhd \xbold_1/\xbold] \sim_\Rack t[\xbold_0/\xbold] \lhd t[\xbold_1/\xbold]$ holds in the free rack on $\{\xbold_0, \xbold_1, \y_1, \ldots, \y_n\}$, which implies by Theorem \ref{rackstheorem} that 
\[ \E(t[\xbold_0 \lhd \xbold_1/\xbold])_1 = \E(t[\xbold_0/\xbold] \lhd t[\xbold_1/\xbold])_1 \] and
\[ \E(t[\xbold_0 \lhd \xbold_1/\xbold])_2 \sim \E(t[\xbold_0/\xbold] \lhd t[\xbold_1/\xbold])_2. \] 
Then since $\E(t)_1 = \xbold$, we can use Lemma \ref{racksubstlemma} and the definition of $\E$ to reason as follows:
\begin{align*}
&\quad \ \xbold_1 \cdot \E(t)_2[\xbold_1^{-1} \xbold_0 \xbold_1/\xbold] \\	
&\sim \E(t[\xbold_0 \lhd \xbold_1/\xbold])_2 \\
&\sim \E(t[\xbold_0/\xbold] \lhd t[\xbold_1/\xbold])_2 \\
&\sim \E(t[\xbold_0/\xbold])_2 \cdot \E(t[\xbold_1/\xbold])_2^{-1} \cdot \xbold_1 \cdot \E(t[\xbold_1/\xbold])_2 \\
&\sim \E(t)_2[\xbold_0/\xbold] \cdot \E(t)_2^{-1}[\xbold_1/\xbold] \cdot \xbold_1 \cdot \E(t)_2[\xbold_1/\xbold].
\end{align*}
Now let $s := \E(t)_2$. Then the above congruence becomes 
\[ \xbold_1 \cdot s[\xbold_1^{-1} \xbold_0 \xbold_1/\xbold] \sim s[\xbold_0/\xbold] \cdot s^{-1}[\xbold_1/\xbold] \cdot \xbold_1 \cdot s[\xbold_1/\xbold], \]
Now let $s_r$ be the unique reduced word obtained from $s$, so that $s \sim s_r$ and we have 
\[ \xbold_1 \cdot s_r[\xbold_1^{-1} \xbold_0 \xbold_1 / \xbold] \sim s_r[\xbold_0/\xbold] \cdot s_r^{-1}[\xbold_1/\xbold] \cdot \xbold_1 \cdot s_r[\xbold_1/\xbold]. \]
Then by Lemma \ref{secondreducedwordlemma}, it follows that all occurrences of $\xbold$ in $s_r$ precede all occurrences of $\y_1, \ldots, \y_n$ in $s_r$. So then $s_r$ must have the form $s_r \equiv \xbold^{\delta_1} \ldots \xbold^{\delta_p} \y_{i_1}^{\epsilon_1} \ldots \y_{i_m}^{\epsilon_m}$ for some $p, m \geq 0$ and $1 \leq i_1, \ldots, i_m \leq n$, with $\delta_i = \pm 1$ for all $1 \leq i \leq p$ and $\epsilon_j = \pm 1$ for all $1 \leq j \leq m$ and $\delta_i + \delta_{i+1} \neq 0$ for all $1 \leq i < p$ (because $s_r$ is reduced). Then we have
\begin{align*}
\E(t)_2 		&= s \\
			&\sim s_r \\
			&\equiv \xbold^{\delta_1} \ldots \xbold^{\delta_p} \y_{i_1}^{\epsilon_1} \ldots \y_{i_m}^{\epsilon_m} \\ 
			&= \left(\xbold, \xbold^{\delta_1} \ldots \xbold^{\delta_p} \y_{i_1}^{\epsilon_1} \ldots \y_{i_m}^{\epsilon_m}\right)_2 \\
			&= \E\left(\xbold \lhd^{\delta_1} \ldots \lhd^{\delta_{p}} \xbold \lhd^{\epsilon_1} \y_{i_1} \lhd^{\epsilon_2} \ldots \lhd^{\epsilon_m} \y_{i_m}\right)_2,
\end{align*}	
with the last equality justified by Lemma \ref{canonicalformlemma}. Since we also have 
\[ \E(t)_1 = \xbold = \E\left(\xbold \lhd^{\delta_1} \ldots \lhd^{\delta_{p}} \xbold \lhd^{\epsilon_1} \y_{i_1} \lhd^{\epsilon_2} \ldots \lhd^{\epsilon_m} \y_{i_m}\right)_1,  \]
it follows that
\[ \E(t) = \E\left(\xbold \lhd^{\delta_1} \ldots \lhd^{\delta_{p}} \xbold \lhd^{\epsilon_1} \y_{i_1} \lhd^{\epsilon_2} \ldots \lhd^{\epsilon_m} \y_{i_m}\right)_2. \]
Then by Theorem 4, this entails that 
\[ t \sim_\Rack \xbold \lhd^{\delta_1} \ldots \lhd^{\delta_{p}} \xbold \lhd^{\epsilon_1} \y_{i_1} \lhd^{\epsilon_2} \ldots \lhd^{\epsilon_m} \y_{i_m}, \]
so that $t$ is congruent (in the free rack on $\{\xbold, \y_1, \ldots, \y_n\}$) to a term of the form described in the statement of the theorem, as desired. 	
\end{proof}

\noindent Using this characterization of the logical isotropy group of the free rack $\R_n$ on $n$ generators, we now deduce the following more \emph{algebraic} characterization:

\begin{corollary}
\label{racksisotropycor}
Let $\F_n$ and $\R_n$ be the free group and free rack on $n$ generators $\y_1, \ldots, \y_n$, respectively. Then
\[ G_{\T_\Rack}(\R_n) \cong \mathbb{Z} \times \F_n. \] 
\end{corollary}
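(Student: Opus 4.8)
The plan is to use the explicit normal form for elements of $G_{\T_\Rack}(\R_n)$ provided by Theorem \ref{rackisotropy} to set up an explicit bijection with $\mathbb{Z}\times\F_n$, and then transport the group operation (multiplication by substitution into $\xbold$) across this bijection. First I would note that if $[t]\in G_{\T_\Rack}(\R_n)$, then by Theorem \ref{rackisotropy} we may take a representative $t \equiv \xbold \lhd^{\delta_1} \ldots \lhd^{\delta_p} \xbold \lhd^{\epsilon_1} \y_{i_1} \ldots \lhd^{\epsilon_m} \y_{i_m}$ whose associated group word is reduced. Since the prefix $\xbold^{\delta_1}\cdots\xbold^{\delta_p}$ consists of consecutive occurrences of the single constant $\xbold$, reducedness forces $\delta_1 = \ldots = \delta_p$, so Lemma \ref{canonicalformlemma} gives $\E(t)_1 = \xbold$ and $\E(t)_2 \sim \xbold^{k}\cdot w$, where $k := \delta_1 + \ldots + \delta_p \in \mathbb{Z}$ and $w := \y_{i_1}^{\epsilon_1}\ldots\y_{i_m}^{\epsilon_m}$ is a reduced word over $\y_1,\ldots,\y_n$. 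As $p$ and the common sign of the $\delta_j$ vary, $k$ ranges over all of $\mathbb{Z}$. I would then define
\[ \Psi : G_{\T_\Rack}(\R_n) \to \mathbb{Z}\times\F_n, \qquad \Psi([t]) := \left(k,\, [w]^{-1}\right). \]

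To see that $\Psi$ is a well-defined bijection, I would invoke Theorem \ref{rackstheorem}: since $\E([t])_1 = \xbold$ for every element of $G_{\T_\Rack}(\R_n)$, the class $[t]$ in the free rack is determined precisely by the $\sim_\Grp$-class of $\E(t)_2 \in \Term^c(\Sigma_\Grp(\xbold,\y_1,\ldots,\y_n))$. Because reduced group words in distinct $\sim_\Grp$-classes are syntactically distinct, the pair $(k,[w])$ is uniquely determined by $[t]$ (so $\Psi$ is well-defined and injective), and conversely every pair $(k,[w])\in\mathbb{Z}\times\F_n$ arises: given $k$ and a reduced $w$, the word $\xbold^{k}w$ is reduced, and the corresponding left-associated rack term lies in $G_{\T_\Rack}(\R_n)$ by Theorem \ref{rackisotropy}, so $\Psi$ is surjective. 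The unit $[\xbold]$ maps to $(0,e)$.

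The remaining and most delicate step is to check that $\Psi$ is a group homomorphism. Recall that the product on $G_{\T_\Rack}(\R_n)$ is $[t]\cdot[t'] = [t[t'/\xbold]]$ (substitution into $\xbold$), as in the proof of Corollary \ref{quandleisotropycor}. Writing $\E(t)_2 \sim \xbold^{k}w$ and $\E(t')_2 \sim \xbold^{k'}w'$ with $w,w'\in\F_n$, I would first compute $\W(t') = \E(t')_2^{-1}\cdot\E(t')_1\cdot\E(t')_2 \sim (w')^{-1}\xbold^{-k'}\cdot\xbold\cdot\xbold^{k'}w' \sim (w')^{-1}\xbold w'$, and then apply Lemma \ref{mainracksubstlemma} (whose hypotheses hold because the canonical representative of $t$ has the required left-associated form and $\E(t')_1 = \xbold$), obtaining
\[ \E(t[t'/\xbold])_2 \sim \E(t')_2\cdot\E(t)_2[\W(t')/\xbold] \sim \xbold^{k'}w'\cdot(w')^{-1}\xbold^{k}w'w \sim \xbold^{k+k'}\cdot w'w, \]
where the middle congruence uses that $w$ contains no $\xbold$ and that $((w')^{-1}\xbold w')^{k} \sim (w')^{-1}\xbold^{k}w'$ in a free group. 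Thus the product corresponds to $(k,w)\cdot(k',w') = (k+k',\,w'w)$.

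The one subtlety worth flagging is that this product \emph{reverses} the order in the $\F_n$-component (yielding $w'w$ rather than $ww'$), which is exactly why I defined $\Psi$ using $[w]^{-1}$ rather than $[w]$: since $[w'w]^{-1} = [w]^{-1}[w']^{-1}$, the order reversal is cancelled by the inversion, and I would conclude $\Psi([t]\cdot[t']) = (k+k',[w'w]^{-1}) = (k+k',[w]^{-1}[w']^{-1}) = \Psi([t])\cdot\Psi([t'])$. Hence $\Psi$ is a bijective homomorphism, establishing $G_{\T_\Rack}(\R_n)\cong\mathbb{Z}\times\F_n$. I expect the main obstacle to be the bookkeeping in the substitution computation (the appearance of the conjugate $\W(t')\sim(w')^{-1}\xbold w'$ and its interaction with the $\xbold$-power), rather than any conceptual difficulty; the reversal of multiplication order in $\F_n$ is the only genuinely non-routine point, and it is handled cleanly by the inversion built into $\Psi$.
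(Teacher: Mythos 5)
Your proposal is correct, and its computational core coincides with the paper's: both arguments rest on the normal form of Theorem \ref{rackisotropy}, the word-problem criterion of Theorem \ref{rackstheorem}, Lemma \ref{canonicalformlemma}, and the substitution formula of Lemma \ref{mainracksubstlemma} via $\W(t')\sim (w')^{-1}\xbold\, w'$, and both arrive at the same key identity $\E(t[t'/\xbold])_2 \sim \xbold^{k+k'}\cdot w'w$ exhibiting the order reversal in the $\F_n$-component. The one genuine difference is how that reversal is handled. The paper defines a map in the opposite direction, $\phi : \mathbb{Z}\times\F_n \to G_{\T_\Rack}(\R_n)$, by directly transcribing $(z,[w])$ into a rack term, proves $\phi$ is a well-defined bijective \emph{anti}-homomorphism, and then closes with the abstract observation that anti-isomorphic groups are isomorphic (via $g\mapsto g^{-1}$). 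You instead absorb that inversion into the map itself, defining $\Psi([t]) := (k,[w]^{-1})$, so that the reversal $[w'w]^{-1} = [w]^{-1}[w']^{-1}$ is cancelled and $\Psi$ is an honest isomorphism from the start. Your packaging is slightly cleaner in that it produces an explicit isomorphism rather than an explicit anti-isomorphism followed by a non-constructive conversion step; the paper's map has the marginally simpler formula (no inversion in its definition), which makes its well-definedness and injectivity checks read a bit more directly off the normal form. Both routes are complete, and your verification of well-definedness, bijectivity, and the homomorphism property (including the observation that reducedness forces $\delta_1=\cdots=\delta_p$, so the $\xbold$-prefix is genuinely a power $\xbold^k$) is sound.
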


\begin{proof}
We define a function \[ \phi : \mathbb{Z} \times \F_n \to G_{\T_\Rack}(\R_n) \] as follows. Let $\left(z, [t]\right) \in \mathbb{Z} \times \F_n$, with $t \in \Term^c(\Sigma_\Grp(\y_1, \ldots, \y_n))$. We may suppose without loss of generality that $t$ is reduced (since $t \sim t_r$, where $t_r$ is the unique reduced word congruent to $t$). So $t \equiv \y_{i_1}^{\epsilon_1} \ldots \y_{i_m}^{\epsilon_m}$ for some $m \geq 0$ and $1 \leq i_1, \ldots, i_m \leq n$ and $\epsilon_j = \pm 1$ for each $1 \leq j \leq m$ (if $m = 0$, then $t \equiv e$). If $z = 0$, then we set
\[ \phi\left(z, \left[\y_{i_1}^{\epsilon_1} \ldots \y_{i_m}^{\epsilon_m}\right]\right) := \left[\xbold \lhd^{\epsilon_1} \y_{i_1} \lhd^{\epsilon_2} \ldots \lhd^{\epsilon_m} \y_{i_m}\right] \in G_{\T_\Rack}(\R_n). \] Otherwise, we set
\[ \phi\left(z, \left[\y_{i_1}^{\epsilon_1} \ldots \y_{i_m}^{\epsilon_m}\right]\right) := \left[\xbold \lhd^{\delta_1} \ldots \lhd^{\delta_{z}} \xbold \lhd^{\epsilon_1} \y_{i_1} \lhd^{\epsilon_2} \ldots \lhd^{\epsilon_m} \y_{i_m}\right] \in G_{\T_\Rack}(\R_n), \] where $\delta_1, \ldots, \delta_z = 1$ if $z > 0$ and $\delta_1, \ldots, \delta_z = -1$ if $z < 0$. 

To see that $\phi$ is well-defined, note that if we have $s, t \in \Term^c(\Sigma_\Grp(\y_1, \ldots, \y_n))$ with $[s] = [t]$, i.e. $s \sim t$, then $s$ and $t$ will have the same (unique) reduction, and hence we will indeed have $\phi(z, [s]) = \phi(z, [t])$ for any $z \in \Z$.   

Now we show that $\phi$ is actually a group \emph{anti}-homomorphism. So let \[ \left(z, \left[\y_{i_1}^{\epsilon_1} \ldots \y_{i_m}^{\epsilon_m}\right]\right), \left(z', \left[\y_{j_1}^{\delta_1} \ldots \y_{j_p}^{\delta_p}\right]\right) \in \mathbb{Z} \times \F_n, \] with $\y_{i_1}^{\epsilon_1} \ldots \y_{i_m}^{\epsilon_m}, \y_{j_1}^{\delta_1} \ldots \y_{j_p}^{\delta_p} \in \Term^c(\Sigma_\Grp(\y_1, \ldots, \y_n))$ reduced. We want to show that 
\[ \phi\left(z + z', \left[\y_{i_1}^{\epsilon_1} \ldots \y_{i_m}^{\epsilon_m} \y_{j_1}^{\delta_1} \ldots \y_{j_p}^{\delta_p}\right]\right) = \phi\left(z', \left[\y_{j_1}^{\delta_1} \ldots \y_{j_p}^{\delta_p}\right]\right) \cdot \phi\left(z, \left[\y_{i_1}^{\epsilon_1} \ldots \y_{i_m}^{\epsilon_m}\right]\right). \]
Since the group multiplication in $G_{\T_\Rack}(\R_n)$ is given by substitution into $\xbold$, this means showing that if
\[ \phi\left(z + z', \left[\y_{i_1}^{\epsilon_1} \ldots \y_{i_m}^{\epsilon_m} \y_{j_1}^{\delta_1} \ldots \y_{j_p}^{\delta_p}\right]\right) = [t], \]
\[ \phi\left(z', \left[\y_{j_1}^{\delta_1} \ldots \y_{j_p}^{\delta_p}\right]\right) = [t_1], \] and
\[ \phi\left(z, \left[\y_{i_1}^{\epsilon_1} \ldots \y_{i_m}^{\epsilon_m}\right]\right) = [t_2], \]
then \[ t \sim_\Rack t_1[t_2/\xbold] \] holds in the free rack on $\{\xbold, \y_1, \ldots, \y_n\}$. By Theorem \ref{rackstheorem}, it suffices to show that $\E(t)_1 = \E(t_1[t_2/\xbold])_1 \in \{\xbold, \y_1, \ldots, \y_n\}$ and $\E(t)_2 \sim \E(t_1[t_2/\xbold])_2$ holds in the free group on $\{\xbold, \y_1, \ldots, \y_n\}$. By Lemma \ref{canonicalformlemma} and the definition of $\phi$, we have 
\[ \E(t)_1 = \xbold = \E(t_1)_1 = \E(t_2)_1 \] and
\[ \E(t)_2 \sim \xbold^{z+z'}\y_{i_1}^{\epsilon_1} \ldots \y_{i_m}^{\epsilon_m} \y_{j_1}^{\delta_1} \ldots \y_{j_p}^{\delta_p}, \]
\[ \E(t_1)_2 \sim \xbold^{z'} \y_{j_1}^{\delta_1} \ldots \y_{j_p}^{\delta_p}, \] 
\[ \E(t_2)_2 \sim \xbold^z \y_{i_1}^{\epsilon_1} \ldots \y_{i_m}^{\epsilon_m}. \]
Now note that 
\begin{align*}
\W(t_2) 	&:= \E(t_2)_2^{-1} \cdot \E(t_2)_1 \cdot \E(t_2)_2 \\
			&\sim \y_{i_m}^{-\epsilon_m} \ldots \y_{i_1}^{-\epsilon_1} \cdot \xbold^{-z} \cdot \xbold \cdot \xbold^{z} \y_{i_1}^{\epsilon_1} \ldots \y_{i_m}^{\epsilon_m} \\
			&\sim \y_{i_m}^{-\epsilon_m} \ldots \y_{i_1}^{-\epsilon_1} \cdot \xbold \cdot \y_{i_1}^{\epsilon_1} \ldots \y_{i_m}^{\epsilon_m}. 
\end{align*}
So then by Lemma \ref{mainracksubstlemma}, we have 
\begin{align*}
\E(t_1[t_2/\xbold])_2	&\sim \E(t_2)_2 \cdot\E(t_1)_2[\W(t_2)/\xbold] \\
&\sim \xbold^z \y_{i_1}^{\epsilon_1} \ldots \y_{i_m}^{\epsilon_m} \cdot \left(\xbold^{z'} \y_{j_1}^{\delta_1} \ldots \y_{j_p}^{\delta_p}\right)[\W(t_2)/\xbold] \\
&\sim \xbold^z \y_{i_1}^{\epsilon_1} \ldots \y_{i_m}^{\epsilon_m} \cdot \y_{i_m}^{-\epsilon_m} \ldots \y_{i_1}^{-\epsilon_1} \cdot \xbold^{z'} \cdot \y_{i_1}^{\epsilon_1} \ldots \y_{i_m}^{\epsilon_m} \cdot \y_{j_1}^{\delta_1} \ldots \y_{j_p}^{\delta_p} \\
&\sim \xbold^z \cdot \xbold^{z'} \cdot \y_{i_1}^{\epsilon_1} \ldots \y_{i_m}^{\epsilon_m} \cdot \y_{j_1}^{\delta_1} \ldots \y_{j_p}^{\delta_p} \\
&\sim \xbold^{z+z'} \cdot \y_{i_1}^{\epsilon_1} \ldots \y_{i_m}^{\epsilon_m} \cdot \y_{j_1}^{\delta_1} \ldots \y_{j_p}^{\delta_p} \\
&\sim \E(t)_2,
\end{align*}
as desired. This completes the proof that $\phi$ is an anti-homomorphism. \par

Now we show that $\phi$ is bijective. That $\phi$ is surjective follows almost immediately from Theorem \ref{rackisotropy}. To show that $\phi$ is injective, let $\left(z, \left[\y_{i_1}^{\epsilon_1} \ldots \y_{i_m}^{\epsilon_m}\right]\right) \in\mathbb{Z} \times \F_n$ with $\y_{i_1}^{\epsilon_1} \ldots \y_{i_m}^{\epsilon_m}$ reduced, and suppose that
\[ \phi\left(z, \left[\y_{i_1}^{\epsilon_1} \ldots \y_{i_m}^{\epsilon_m}\right]\right) = [\xbold], \] the unit element of the group $G_{\T_\Rack}(\R_n)$. We must show that
\[ \left(z, \left[\y_{i_1}^{\epsilon_1} \ldots \y_{i_m}^{\epsilon_m}\right]\right) = (0, [e]) \in \mathbb{Z} \times \F_n. \] By definition of $\phi$ and Theorem \ref{rackstheorem} and Lemma \ref{canonicalformlemma}, the assumption implies that
\[ \xbold^z \cdot \y_{i_1}^{\epsilon_1} \ldots \y_{i_m}^{\epsilon_m} \sim e, \] which forces $z = 0$ and $\y_{i_1}^{\epsilon_1} \ldots \y_{i_m}^{\epsilon_m} \sim e$, so that
$\left[\y_{i_1}^{\epsilon_1} \ldots \y_{i_m}^{\epsilon_m}\right] = [e]$, as desired. This proves that $\phi$ is bijective, which means that
\[ \phi : \mathbb{Z} \times \F_n \to G_{\T_\Rack}(\R_n) \]
is a group \emph{anti}-isomorphism. However, it is a simple fact of group theory that any two anti-isomorphic groups are isomorphic, and so it follows that
\[ \mathbb{Z} \times \F_n \cong G_{\T_\Rack}(\R_n), \] as desired. 
\end{proof}

We can now use our characterization(s) of the logical isotropy groups of free, finitely generated racks to deduce the following characterizations of the \emph{categorical} isotropy groups of these racks, whose proofs are similar to those of Corollary \ref{quandlesisotropycor}.

\begin{corollary}
\label{secondracksisotropycor}
Let $n \geq 0$. 
\begin{enumerate}
\item Let \[ \pi = \left(\pi_h : \cod(h) \to \cod(h)\right)_{\dom(h) = \R_n} \] be a (not necessarily natural) family of endomorphisms of racks, indexed by rack morphisms $h$ with domain $\R_n$. Then $\pi \in \Z_{\T_\Rack}(\R_n)$ iff there is a unique integer $z \in \Z$ and a unique reduced word
\[ \y_{i_1}^{\epsilon_1} \ldots \y_{i_m}^{\epsilon_m} \in \Term^c(\Sigma_\Grp(\y_1, \ldots, \y_n)) \] with the property that for any rack morphism $h : \R_n \to R$ we have
\[ \pi_h(r) = r \lhd^{\delta_1} \ldots \lhd^{\delta_z} r \lhd^{\epsilon_1} h_{i_1} \lhd^{\epsilon_2} \ldots \lhd^{\epsilon_m} h_{i_m} \in R, \] where $\delta_1, \ldots, \delta_z = 1$ if $z > 0$ and $\delta_1, \ldots, \delta_z = -1$ if $z < 0$. 

\item Let $h : \R_n \to \R_n$ be a rack endomorphism. Then $h$ is a categorical inner automorphism iff there is a unique integer $z \in \Z$ and a unique reduced word $\y_{i_1}^{\epsilon_1} \ldots \y_{i_m}^{\epsilon_m} \in \Term^c(\Sigma_\Grp(\y_1, \ldots, \y_n))$ such that 
\[ h([s]) = \left[s \lhd^{\delta_1} \ldots \lhd^{\delta_z} s \lhd^{\epsilon_1} \y_{i_1} \lhd^{\epsilon_2} \ldots \lhd^{\epsilon_m} \y_{i_m} \right] \in \R_n \] for any $[s] \in \R_n$ (so $s \in \Term^c(\Sigma(\y_1, \ldots, \y_n))$ and $\delta_1, \ldots, \delta_z$ are as above).

\item Let $h : \R_n \to \R_n$ be a rack automorphism. Then $h$ is a \textbf{categorical} inner automorphism iff $h$ is an \textbf{algebraic} inner automorphism.    
\end{enumerate} \qed
\end{corollary}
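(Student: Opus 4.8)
The plan is to follow the proof of Corollary~\ref{quandlesisotropycor} almost verbatim, feeding the explicit description of $G_{\T_\Rack}(\R_n)$ supplied by Theorem~\ref{rackisotropy} into the two bullet points preceding Definition~1 (which identify the covariant isotropy group $\mathcal{Z}_{\T_\Rack}(\R_n)$ with the logical group $G_{\T_\Rack}(\R_n)$ and record how a class $[t]$ acts as an automorphism $[t]^{N,h_1,\ldots,h_n}$). I will use throughout that, by Theorem~\ref{rackisotropy}, every element of $G_{\T_\Rack}(\R_n)$ is the class of a term $\xbold \lhd^{\delta_1}\ldots\lhd^{\delta_p}\xbold \lhd^{\epsilon_1}\y_{i_1}\ldots\lhd^{\epsilon_m}\y_{i_m}$ whose associated group word is reduced, and that reducedness of $\xbold^{\delta_1}\cdots\xbold^{\delta_p}$ forces all $\delta_k$ to share one sign, so the leading block is recorded by a single integer $z$ with $|z|=p$.

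For part (1), the first bullet point gives $\pi \in \mathcal{Z}_{\T_\Rack}(\R_n)$ iff $\pi_h = [t]^{R,h_1,\ldots,h_n}$ for a unique $[t]\in G_{\T_\Rack}(\R_n)$; substituting $r$ for $\xbold$ and $h_{i_j}$ for $\y_{i_j}$ in the canonical term yields exactly the displayed value $r \lhd^{\delta_1}\ldots\lhd^{\delta_z} r \lhd^{\epsilon_1}h_{i_1}\ldots\lhd^{\epsilon_m}h_{i_m}$, and uniqueness of the pair $(z,\,\y_{i_1}^{\epsilon_1}\cdots\y_{i_m}^{\epsilon_m})$ comes from uniqueness of $[t]$ together with Corollary~\ref{racksisotropycor} (equivalently, uniqueness of the exponent $z$ and of reduced group words modulo $\sim_\Grp$). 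Part (2) is the identical argument run through the second bullet point with $N=\R_n$ and $h=\id$, so that the conjugating images are the generators $[\y_{i_j}]$ and $\pi_{\id}=h$ acts by $[s]\mapsto[t[s/\xbold]]$.

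Part (3) is the only part needing genuine work, and only in one direction. The implication categorical~$\Rightarrow$~algebraic is immediate, since the formula for $h$ in part (2) is visibly of the shape defining an algebraic inner automorphism (same-sign leading block $s\lhd^{\delta_1}\ldots\lhd^{\delta_z}s$, conjugation by the generators $[\y_{i_j}]\in\R_n$). For the converse, suppose $h$ is algebraic inner; then $h = [t]^{\R_n,[\y_1],\ldots,[\y_n]}$ with
\[ t \equiv \xbold \lhd^{\delta_1}\xbold \lhd^{\delta_2}\ldots\lhd^{\delta_p}\xbold \lhd^{\epsilon_1}\rho_1 \lhd^{\epsilon_2}\ldots\lhd^{\epsilon_{n'}}\rho_{n'}, \]
where $\rho_1,\ldots,\rho_{n'}\in\Term^c(\Sigma(\y_1,\ldots,\y_n))$ represent the (arbitrary) conjugating elements of $\R_n$ and the signs $\delta_k,\epsilon_j$ are arbitrary. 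By the second bullet point it suffices to show $[t]\in G_{\T_\Rack}(\R_n)$, which by Theorem~\ref{rackisotropy} and Theorem~\ref{rackstheorem} amounts to checking that $\E(t)$ agrees with that of a canonical term. One computes $\E(t)_1=\Left(t)=\xbold$, and, using Lemma~\ref{canonicalformlemma} on the leading block together with the identity $(g^{-1}hg)^{\epsilon}=g^{-1}h^{\epsilon}g$,
\[ \E(t)_2 \sim \xbold^{z}\cdot \W(\rho_1)^{\epsilon_1}\cdots\W(\rho_{n'})^{\epsilon_{n'}}, \qquad z := \delta_1+\cdots+\delta_p. \]
Each $\W(\rho_j)$ is a conjugate of a generator and hence lies in $\F_n$, so the tail is some $c\in\F_n$; writing $c$ in reduced form $\y_{i_1}^{\zeta_1}\cdots\y_{i_m}^{\zeta_m}$ exhibits $\E(t)_2\sim\xbold^{z}\y_{i_1}^{\zeta_1}\cdots\y_{i_m}^{\zeta_m}$ as reduced, whence $[t]$ is of canonical form and lies in $G_{\T_\Rack}(\R_n)$. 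Thus $h$ is a categorical inner automorphism.

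I expect the main obstacle to be precisely this converse. The datum of an algebraic inner automorphism is a priori more general than the categorical form, since it permits a mixed-sign block $s\lhd^{\delta_1}s\lhd^{\delta_2}\cdots$ and conjugation by arbitrary elements of $\R_n$ rather than by generators. The point is that both collapse: the $\E$-computation shows the leading block depends only on the net exponent $z=\sum_k\delta_k$, and the conjugation tail, being a product of conjugates of generators, is right-multiplication by a single element of $\F_n$ and hence by a single reduced word in the $\y_i$. Establishing these two collapses cleanly (via Lemma~\ref{canonicalformlemma} and the elementary group identity above) is where the care lies; once $\E(t)_2\sim\xbold^{z}\cdot(\text{reduced }\y\text{-word})$ is in hand, membership in $G_{\T_\Rack}(\R_n)$ and the conclusion follow formally from Theorem~\ref{rackisotropy}.
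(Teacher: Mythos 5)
Your proposal is correct, and on parts (1) and (2), together with the ``categorical $\Rightarrow$ algebraic'' half of part (3), it coincides with the paper's (sketched) argument: feed the canonical forms of Theorem \ref{rackisotropy} into the two bullet points of Section 2 describing how elements of the logical isotropy group induce elements of $\mathcal{Z}_{\T_\Rack}(\R_n)$, and get uniqueness of the pair $(z, \y_{i_1}^{\epsilon_1}\cdots\y_{i_m}^{\epsilon_m})$ from the uniqueness of $[t]$ plus the fact that congruent reduced words are identical. Where you genuinely diverge is the ``algebraic $\Rightarrow$ categorical'' half of part (3). The paper gets this direction for free from its Section 2 remark that any algebraic inner automorphism of an \emph{arbitrary} rack is a categorical inner automorphism (one extends the defining formula along each morphism by applying it to the conjugating elements; naturality is formal, but one must also know that $r \mapsto r \lhd s$ and, less trivially, $r \mapsto r \lhd r$ are automorphisms of every rack). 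You instead stay entirely inside the word-problem machinery: form the term $t$ from the algebraic data, compute $\E(t)_1 = \xbold$ and $\E(t)_2 \sim \xbold^{z} \cdot \W(\rho_1)^{\epsilon_1} \cdots \W(\rho_{n'})^{\epsilon_{n'}}$, observe that the tail lies in $\F_n$ and hence reduces to a $\y$-word, and conclude $[t] \in G_{\T_\Rack}(\R_n)$ via Theorems \ref{rackstheorem} and \ref{rackisotropy}. This buys self-containedness --- you never invoke the general claim that the paper states without proof, nor need to verify that the algebraic formula always yields a natural family of automorphisms --- at the cost of generality, since your argument is special to free racks. Two small points of rigor: the congruence for $\E(t)_2$ along the tail is not literally an instance of Lemma \ref{canonicalformlemma} (the $\rho_j$ are arbitrary closed terms, not generators), so it needs its own short induction on $n'$ directly from the definition of $\E$, using the identity $(g^{-1}hg)^{\epsilon} = g^{-1}h^{\epsilon}g$ as you indicate; and you should note explicitly that $\xbold^{z}\y_{i_1}^{\zeta_1}\cdots\y_{i_m}^{\zeta_m}$ is reduced because the powers of $\xbold$ carry a single sign and no cancellation can occur at the junction with the $\y$-word.
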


\noindent As for quandles, we can deduce a characterization of the \emph{global isotropy group} of the category $\Rack$ of racks and their homomorphisms, i.e. the group $\Aut\left(\Id_\Rack\right)$ of automorphisms of the identity functor $\Id_\Rack : \Rack \to \Rack$ (which is also the group of invertible elements of the \emph{centre} of the category $\Rack$, which is the monoid $\End\left(\Id_\Rack\right)$ of natural \emph{endo}morphisms of the identity functor). Since the category $\Rack$ has an initial object, namely the absolutely free rack $\R_0$ (whose carrier is just the empty set), it is easy to see that the global isotropy group of $\Rack$ is exactly the (covariant) categorical isotropy group of the initial object $\R_0$, i.e.
\[ \Aut\left(\Id_\Rack\right) = \mathcal{Z}_{\T_\Rack}(\R_0). \] Since
\[ \mathcal{Z}_{\T_\Rack}(\R_0) \cong G_{\T_\Rack}(\R_0) \cong \mathbb{Z} \times \F_0 \cong \mathbb{Z} \] by Corollary \ref{racksisotropycor} (since $\F_0$ is the trivial group), we thus obtain:

\begin{corollary}
\label{globalrackcor}
The global isotropy group of the category $\Rack$ is isomorphic to the group $\mathbb{Z}$:
\[ \Aut\left(\Id_\Rack\right) \cong \mathbb{Z}. \] Explicitly, the natural automorphisms of $\Id_\Rack$ are exactly the natural transformations $\psi : \Id_\Rack \to \Id_\Rack$ with
\[ \psi_R(r) = r \lhd^{\delta_1} \ldots \lhd^{\delta_z} r \tag{$r \in R$} \] for any $z \in \mathbb{Z}$ and rack $R$ (with $\delta_1, \ldots, \delta_z = \pm 1$ as in Corollary \ref{secondracksisotropycor}).
\qed 
\end{corollary}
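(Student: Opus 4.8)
The plan is to reduce the statement to the already-established computation of the logical isotropy group of the free rack on zero generators, using the general fact that the global isotropy group of a category with an initial object coincides with the covariant isotropy group of that initial object.

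First I would observe that $\Rack$ has the absolutely free rack $\R_0$ (on the empty set, with empty carrier) as its initial object: for every rack $R$ there is a unique homomorphism $!_R : \R_0 \to R$. Consequently the slice category $\R_0/\Rack$ is isomorphic to $\Rack$ itself, since an object $h : \R_0 \to R$ is uniquely determined by its codomain $R$, and — because $\R_0$ is initial — a slice morphism between $h : \R_0 \to R$ and $h' : \R_0 \to S$ is simply an arbitrary rack homomorphism $R \to S$. Under this isomorphism the forgetful functor $\R_0/\Rack \to \Rack$ corresponds to $\Id_\Rack$, and slice-naturality corresponds to ordinary naturality, so a natural automorphism of the one is exactly a natural automorphism of the other. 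This yields the identification
\[ \Aut\left(\Id_\Rack\right) = \mathcal{Z}_{\T_\Rack}(\R_0). \]

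Next I would invoke the general isomorphism $\mathcal{Z}_\T(M_n) \cong G_\T(M_n)$ between the categorical and logical isotropy groups (cf. \cite[Corollary 2.4.15]{thesis}) in the case $\T = \T_\Rack$, $n = 0$, together with Corollary \ref{racksisotropycor}, which gives $G_{\T_\Rack}(\R_n) \cong \Z \times \F_n$. Setting $n = 0$ and using that $\F_0$ is the trivial group yields
\[ \Aut\left(\Id_\Rack\right) = \mathcal{Z}_{\T_\Rack}(\R_0) \cong G_{\T_\Rack}(\R_0) \cong \Z \times \F_0 \cong \Z, \]
which is the isomorphism claimed. For the explicit description of the natural automorphisms, I would then specialize Corollary \ref{secondracksisotropycor}(1) to $n = 0$: with no generators present the only reduced group word over $\y_1, \ldots, \y_n$ is the empty word (so $m = 0$), and the family $\pi$ is therefore determined by a single integer $z \in \Z$ via $\pi_h(r) = r \lhd^{\delta_1} \ldots \lhd^{\delta_z} r$, with $\delta_1, \ldots, \delta_z = 1$ if $z > 0$ and $= -1$ if $z < 0$. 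Transporting this across the isomorphism $\R_0/\Rack \cong \Rack$, i.e. reading $\psi_R := \pi_{!_R}$, gives exactly the stated formula $\psi_R(r) = r \lhd^{\delta_1} \ldots \lhd^{\delta_z} r$.

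I expect the only genuinely non-routine point to be the categorical bookkeeping in the first step: one must check carefully that the isomorphism $\R_0/\Rack \cong \Rack$ sends the slice-projection functor to $\Id_\Rack$ and slice-naturality to ordinary naturality, so that $\Aut\left(\Id_\Rack\right)$ and $\mathcal{Z}_{\T_\Rack}(\R_0)$ are identified \emph{as groups} and not merely abstractly isomorphic. Everything downstream is immediate from results already proved — the specialization of Corollaries \ref{racksisotropycor} and \ref{secondracksisotropycor} to $n = 0$, and the fact that the resulting map $r \mapsto r \lhd^{\delta_1} \ldots \lhd^{\delta_z} r$ is automatically a natural automorphism of $\Id_\Rack$ (since it arises from an element of the isotropy group, each $(-) \lhd r$ being a rack automorphism).
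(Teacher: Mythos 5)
Your proposal is correct and takes essentially the same route as the paper: identify $\Aut\left(\Id_\Rack\right)$ with the covariant isotropy group $\mathcal{Z}_{\T_\Rack}(\R_0)$ of the initial object $\R_0$, then apply Corollary \ref{racksisotropycor} with $n = 0$ (where $\F_0$ is trivial) to get $\mathbb{Z}$, and specialize Corollary \ref{secondracksisotropycor} to $n = 0$ for the explicit form of the natural transformations. The only difference is one of exposition: you spell out the slice-category bookkeeping ($\R_0/\Rack \cong \Rack$ carrying the projection functor to $\Id_\Rack$) that the paper compresses into ``it is easy to see.''
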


\noindent We also note in connection with Corollary \ref{globalrackcor} that M. Szymik independently proved in \cite[Theorem 5.4]{Szymik} that the center $\End\left(\Id_\Rack\right)$ of the category $\Rack$ is also isomorphic to $\Z$, the free group on one generator. So we obtain as a further corollary:

\begin{corollary}
The global isotropy group of the category $\Rack$ is equal to its center, and both are isomorphic to $\Z$. \qed
\end{corollary}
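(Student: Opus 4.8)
The plan is to deduce this statement directly from Corollary~\ref{globalrackcor} together with the cited result of Szymik, with essentially no new work required. Recall from the discussion preceding Corollary~\ref{globalrackcor} that the global isotropy group $\Aut(\Id_\Rack)$ is by definition the group of \emph{invertible} elements of the center $\End(\Id_\Rack)$, the monoid of natural endomorphisms of the identity functor on $\Rack$. Thus there is always a canonical inclusion of $\Aut(\Id_\Rack)$ into $\End(\Id_\Rack)$ as the submonoid of units, and the entire task reduces to showing that this inclusion is in fact an equality.

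First I would invoke Corollary~\ref{globalrackcor}, which gives $\Aut(\Id_\Rack) \cong \Z$. Next I would invoke Szymik's \cite[Theorem 5.4]{Szymik}, which asserts that the center $\End(\Id_\Rack)$ is isomorphic to $\Z$ regarded as the free group on one generator. The crucial point is that this latter isomorphism exhibits the monoid $\End(\Id_\Rack)$ as (isomorphic to) a \emph{group}: in a group every element is invertible, so under the isomorphism every natural endomorphism of $\Id_\Rack$ must already be a natural automorphism. Consequently the submonoid of units of $\End(\Id_\Rack)$ is the whole of $\End(\Id_\Rack)$, i.e. $\Aut(\Id_\Rack) = \End(\Id_\Rack)$.

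Combining these observations finishes the argument: the global isotropy group $\Aut(\Id_\Rack)$ coincides with the center $\End(\Id_\Rack)$, and both are isomorphic to $\Z$ (the first by Corollary~\ref{globalrackcor}, the second by Szymik's theorem). There is no genuine obstacle here, since all of the substantive content is already carried by the two cited inputs; the only thing to verify is the elementary fact that a monoid isomorphic to a group has all of its elements invertible, so that its group of units is the entire monoid. As a consistency check, one may note that the explicit description of the natural automorphisms of $\Id_\Rack$ furnished by Corollary~\ref{globalrackcor} (namely $\psi_R(r) = r \lhd^{\delta_1} \ldots \lhd^{\delta_z} r$ for $r \in R$) already exhibits a full copy of $\Z$, matching the size of the center computed by Szymik, in accordance with the two monoids coinciding.
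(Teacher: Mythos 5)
Your proposal is correct and follows essentially the same route as the paper: the paper also obtains this corollary by combining Corollary~\ref{globalrackcor} with Szymik's computation of $\End\left(\Id_\Rack\right)$, leaving the deduction implicit. Your explicit observation that a monoid isomorphic to a group consists entirely of units, so that $\Aut\left(\Id_\Rack\right) = \End\left(\Id_\Rack\right)$ rather than merely being abstractly isomorphic to it, is exactly the small step the paper's \qed elides.
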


\section{Conclusions}

We have characterized the isotropy groups of the free, finitely generated racks and quandles both logically and categorically, and shown as a consequence that the notions of \emph{categorical} and \emph{algebraic} inner automorphism coincide for such racks and quandles. One would hope to be able to extend the results herein to \emph{arbitrary} (or at least \emph{finitely presented}) racks and quandles, but it is not clear how one could accomplish this, since the results herein relied heavily on the solutions of the word problems for free racks and quandles given in terms of the solution of the word problem for free groups (\cite[Section 4.1]{SDworld}), whereas it is known that the general word problems for finitely presented racks and quandles are undecidable (\cite{Belk}). 

However, as shown in \cite{MFPSpaper} and \cite{thesis}, in order to compute the isotropy group of a quandle $\Q$, one ideally only needs an effective description of the quandle $\Q\la \xbold \ra$ obtained from $\Q$ by freely adjoining a new element $\xbold$, which is just the free product (or coproduct) of the quandle $\Q$ with the free quandle $\la \xbold \ra$ on one generator $\xbold$. In the recent work \cite{Bardakov}, it is shown for quandles $\Q_1$ and $\Q_2$ that if the canonical maps of $\Q_1$ and $\Q_2$ into their associated groups are injective, then the free product (i.e. coproduct) of $\Q_1$ and $\Q_2$ has an explicit presentation. However, despite the significant improvement of this presentation over the canonical presentation of the coproduct of models of a general equational theory, this presentation is still much more difficult to work with than the presentations of free quandles, even in the specific case where $\Q_2$ is the free quandle on one generator. We have thus far not been able to extend the results herein to quandles $\Q$ for which the canonical map into its associated group is injective.

\section*{Acknowledgements}
The research in this article was completed as part of the author's PhD thesis at the University of Ottawa. The author is grateful to his advisors Pieter Hofstra and Philip Scott for discussions about the material herein.

\medskip

\end{document}